\DeclareRobustCommand*{\mfaktor}[3][]
{
   { \mathpalette{\mfaktor@impl@}{{#1}{#2}{#3}} }
}
\newcommand*{\mfaktor@impl@}[2]{\mfaktor@impl#1#2}
\newcommand*{\mfaktor@impl}[4]{
   \settoheight{\faktor@zaehlerhoehe}{\ensuremath{#1#2{#3}}}%
   \settoheight{\faktor@nennerhoehe}{\ensuremath{#1#2{#4}}}%
      \raisebox{-0.5\faktor@zaehlerhoehe}{\ensuremath{#1#2{#3}}}%
      \mkern-4mu\diagdown\mkern-5mu
      \raisebox{0.5\faktor@nennerhoehe}{\ensuremath{#1#2{#4}}}%
}
\newtheorem{thm}{Theorem}[section]
\newtheorem{lem}[thm]{Lemma}
\newtheorem{cor}[thm]{Corollary}
\theoremstyle{definition}
\newtheorem{defi}[thm]{Definition}
\newtheorem{ex}[thm]{Example}
\theoremstyle{remark}
\newtheorem{rem}[thm]{Remark}
\newtheorem{notation}[thm]{Notation}
\newcommand{\eps}{\varepsilon}
\newcommand{\mfdspace}{\mathcal{M}(n,d)}
\newcommand{\mfdspacenk}{\mathcal{M}(n+k,d)}
\newcommand{\dGH}{d_{\text{GH}}}
\newcommand{\bbN}{\mathbb{N}}
\newcommand{\bbR}{\mathbb{R}}
\newcommand{\bbC}{\mathbb{C}}
\newcommand{\bbZ}{\mathbb{Z}}
\newcommand{\bbT}{\mathbb{T}}
\newcommand{\rO}{\mathrm{O}}
\newcommand{\SpinB}{\tensor[^{\diamond}]{\boldsymbol{\Sigma}}{} B}
\newcommand{\SpinBi}{\tensor[^{\diamond}]{\boldsymbol{\Sigma}}{_i} B}
\DeclareMathOperator{\aff}{aff}
\DeclareMathOperator{\Aff}{Aff}
\DeclareMathOperator{\arsinh}{arsinh}
\DeclareMathOperator{\Aut}{Aut}
\DeclareMathOperator{\diam}{diam}
\DeclareMathOperator{\dom}{dom}
\DeclareMathOperator{\dvol}{dvol}
\DeclareMathOperator{\End}{End}
\DeclareMathOperator{\GL}{GL}
\DeclareMathOperator{\grad}{grad}
\DeclareMathOperator{\Hol}{Hol}
\DeclareMathOperator{\id}{Id}
\DeclareMathOperator{\Pin}{Pin}
\DeclareMathOperator{\pin}{pin}
\DeclareMathOperator{\rank}{rank}
\DeclareMathOperator{\scal}{scal}
\DeclareMathOperator{\SO}{SO}
\DeclareMathOperator{\Span}{Span}
\DeclareMathOperator{\Spin}{Spin}
\DeclareMathOperator{\spin}{spin}
\DeclareMathOperator{\tr}{tr}
\DeclareMathOperator{\vol}{vol}
\subjclass[2010]{primary: 53C21, 53C27, 58J50; secondary: 22E25, 53B05}
\begin{document}
\title[Dirac operator and collapse]{The Dirac operator under collapse to a smooth limit space}
\author{Saskia Roos}
\begin{abstract}
Let $(M_i, g_i)_{i \in \bbN}$ be a sequence of spin manifolds with uniform bounded curvature and diameter that converges to a lower dimensional Riemannian manifold $(B,h)$ in the Gromov-Hausdorff topology. Lott showed that the spectrum converges to the spectrum of a certain first order elliptic differential operator $\mathcal{D}^B$ on $B$. In this article we give an explicit description of $\mathcal{D}^B$ and characterize the special case where $\mathcal{D}^B$ is the Dirac operator on $B$. 
\end{abstract}
\maketitle

\section{Introduction}

Let $\mathcal{M}(m,d)$ be the space of closed $m$-dimensional Riemannian manifolds $(M,g)$ with $\vert \sec \vert \leq 1$ and $\diam \leq d$. Gromov proved that any sequence in $\mathcal{M}(m,d)$ contains a subsequence that converges with respect to the Gromov-Hausdorff distance to a compact metric space \cite{GromovPreComp}. There it can happen that the dimension of the limit space is strictly less than $m$. This phenomenon is called \textit{collapsing}. One of the first nontrivial examples of collapse with bounded curvature and diameter was pointed out by Marcel Berger in 1962. He considered the Hopf fibration $S^1 \rightarrow S^3 \rightarrow S^2$. Scaling the fibers by an $\eps >0$ one obtains a collapsing sequence with bounded sectional curvature that converges to a round two-sphere of radius $\frac{1}{2}$ as $\eps \rightarrow 0$. 

The structure of collapse in $\mathcal{M}(m,d)$ was studied by Cheeger, Fukaya and Gromov,\cite{CheegerGromovCollapse1, CheegerGromovCollapse2, FukayaCollapse1, FukayaBoundary, FukayaCollapse2, CheegerFukayaGromov}. Roughly speaking the authors showed that a sequence $(M_i, g_i)_{i \in \bbN}$ that collapses to a metric space $B$ can be approximated by a sequence of singular fibrations $f_i: (M_i, \tilde{g}_i) \rightarrow B$ such that the fibers are infranilmanifolds and $f_i$ is almost everywhere Riemannian submersions.

An interesting question is now, how do the spectra of geometric operators behave under collapse? One would like to know if the spectrum converges and how the limit spectrum is related to the spectrum of the corresponding geometric operator on the limit space.

For the Laplacian on functions, Fukaya showed that if a sequence $(M_i, g_i)_{i \in \bbN}$ in $\mathcal{M}(m,d)$ converges with respect to the measured Gromov-Hausdorff topology to a compact metric space $B$, then the spectrum of the Laplacian on $(M_i,g_i)_{i \in \bbN}$ converges to the spectrum of the Laplacian  on $B$ with respect to the limit measure, \cite{FukayaLaplace}. In general this limit measure is different to the usual volume measure and the spectrum of the associated Laplacian differs from the spectrum of the Laplacian with respect to the standard measure. In \cite{LottLaplaceSingular, LottLaplaceSmooth}, Lott generalized this behavior to the Laplacian on $p$-forms.  Combining the results of \cite{LottLaplaceSingular, LottLaplaceSmooth} with Bochner-type formulas for Dirac-type operators, Lott proved similar results for the spectrum of Dirac-type operators on $G$-Clifford bundles, where $G \in \lbrace \SO(m), \Spin(m) \rbrace$. The results of \cite{LottDirac} can be briefly summarized as follows. Let $(M_i,g_i)_{i \in \bbN}$ be a sequence of manifolds with a $G$-structure in $\mathcal{M}(m,d)$ that converges to a lower dimensional space $B$. Then the spectrum of the Dirac operator restricted to the subspace of spinors that are ``invariant'' along the fibers of the fibrations $M_i \rightarrow B$ converges to the spectrum of an elliptic first order differential operator $\mathcal{D}^B = \sqrt{\Delta + \mathcal{V}}$ acting on a Clifford bundle over $B$. Here $\Delta$ is the Laplacian with respect to the limit measure and $\mathcal{V}$ is a symmetric potential. The remaining part of the spectrum goes to $\pm \infty$ in the limit $i \rightarrow \infty$.

The aim of this paper is to characterize the limit operator $\mathcal{D}^B$ in more detail. We restrict our attention to collapsing sequences of spin manifolds with smooth limit space. This paper is a continuation of \cite{RoosDirac} where we discussed the special case of collapsing sequences of spin manifolds losing one dimension in the limit. In this special case any collapsing sequence can be approximated by a sequence of $S^1$-bundles with a local isometric $S^1$-action. In the general case any collapsing sequence $(M_i, g_i)_{i \in \bbN}$ with a smooth limit space $(B,h)$ can be realized as a collapsing sequence $(f_i: M_i \rightarrow B )_{i \in \bbN}$ of fiber bundles with infranil fibers and affine structure group \cite{FukayaCollapse1, FukayaCollapse2}. This property allows us to prove

\begin{thm}\label{ThmIntroduction}
Let $(M_i, g_i)_{i \in \bbN}$ be a sequence of spin manifolds in $\mfdspacenk$ converging to a closed $n$-dimensional Riemannian manifold $(B,h)$. Then there is a subsequence $(M_i, g_i)_{i \in \bbN}$ such that for all $i \in \bbN$ the space of $L^2$-spinors on $M_i$ can be decomposed into
\begin{align*}
L^2(\Sigma M_i) = \mathcal{S}_i \oplus \mathcal{S}_i^{\perp} 
\end{align*}
such that all eigenvalues of the Dirac operator on $M_i$ restricted to $\mathcal{S}_i^{\perp} $ go to $\pm \infty$ as $i \rightarrow \infty$ and the eigenvalues of the Dirac operator on $M_i$ restricted to $\mathcal{S}_i$ converge to the spectrum of the self-adjoint elliptic first-order differential operator
\begin{align*}
\mathcal{D}^B = \check{D}^{\mathcal{T}} + H
\end{align*}
acting on a twisted Clifford bundle $\mathfrak{P}$ over $B$. Here, $\check{D}^{\mathcal{T}}$ is a Dirac operator on $\mathfrak{P}$ and $H$ a $C^{0,\alpha}$-symmetric potential for $\alpha \in [0,1)$.
\end{thm}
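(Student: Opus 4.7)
The plan is to combine the Cheeger-Fukaya-Gromov structure theorem for bounded curvature collapse with a direct computation of the Dirac operator along a Riemannian submersion with infranil fibers. By the cited structure theory, after passing to a subsequence I may assume each $M_i$ is the total space of a fiber bundle $f_i \colon M_i \to B$ with infranil fibers and affine structure group. A standard $\eps_i$-perturbation of $g_i$ in the $C^{1,\alpha}$-topology yields a metric $\tilde g_i$ making $f_i$ a Riemannian submersion whose fibers shrink, and the Dirac spectra of $(M_i, g_i)$ and $(M_i, \tilde g_i)$ differ only by $o(1)$ as $i \to \infty$. It therefore suffices to work with the submersed metrics throughout.

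Given this reduction, I would build the decomposition $L^2(\Sigma M_i) = \mathcal{S}_i \oplus \mathcal{S}_i^{\perp}$ by fiberwise harmonic analysis. Each fiber is an infranilmanifold covered by a nilmanifold $N$; via Kirillov-type representation theory on $N$, combined with invariance under the finite deck group of the infranil quotient, the vertical Dirac operator $D^V$ has discrete spectrum whose nonzero eigenvalues scale like the inverse fiber diameter and thus diverge under the collapse. I define $\mathcal{S}_i$ as the span of the bounded-eigenvalue modes of $D^V$, which can be identified with the $L^2$-sections of a finite-rank Hermitian bundle $\mathfrak{P} \to B$, twisted by (and Clifford-related to) the horizontal spinor bundle of $B$. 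A min-max argument combined with the lower bound on $|D^V|$ on $\mathcal{S}_i^{\perp}$ then forces the Dirac eigenvalues of $M_i$ restricted to $\mathcal{S}_i^{\perp}$ to tend to $\pm \infty$.

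To identify the limit operator explicitly, I would invoke the O'Neill-Bismut-Moroianu formula
\begin{align*}
D_{M_i} = \check D^H + D^V - \tfrac{1}{2} c(\mathcal{N}) + c(A),
\end{align*}
where $\check D^H$ is the horizontal Dirac operator acting on spinors over $M_i$ as twisted sections, $\mathcal{N}$ the fiber mean curvature vector, and $A$ the O'Neill integrability tensor. Restricting to $\mathcal{S}_i$ kills $D^V$, and the surviving operator descends to $B$: the horizontal term becomes a twisted Dirac operator $\check D^{\mathcal{T}}$ on $\mathfrak{P}$ whose connection is induced by the transverse Bott connection of the fibration, and the remaining zeroth order terms assemble into a symmetric potential $H$. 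As $i \to \infty$, all data (mean curvature, O'Neill tensor, twisting connection) converge in the $C^{0,\alpha}$-topology by the regularity of Fukaya's smoothing, which yields the Hölder regularity of $H$ and identifies $\mathcal{D}^B$ with $\check D^{\mathcal{T}} + H$. By Lott's theorem cited in the introduction, the resulting spectrum is exactly the limit spectrum on $\mathcal{S}_i$.

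The main obstacle is the affine, as opposed to linear, structure group of the fibration: the bundle $\mathfrak{P}$ and the twisting data $\mathcal{T}$ are a priori only locally defined, because the affine transformations act nontrivially on the flat connections along the fibers. Resolving this requires a $\Spin$-equivariant refinement of the Cheeger-Fukaya-Gromov smoothing, together with the observation that the affine holonomy preserves the bounded-eigenvalue subspace of $D^V$ and therefore glues $\mathfrak{P}$ into a well-defined global Clifford bundle. The $C^{0,\alpha}$ regularity of $H$ likewise reflects the best known regularity of the smoothed metric and the induced connections, which is why one cannot expect smoothness in general; verifying that the curvature terms in $c(A)$ and in the mean curvature contribution glue consistently across affine transitions is where the technical work will concentrate.
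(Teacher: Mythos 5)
Your overall architecture (CFG reduction to affine fiber bundles, a fiberwise-defined subspace $\mathcal{S}_i$, a submersion formula for $D^{M_i}$, and $C^{0,\alpha}$-convergence of the surviving data) is the right shape, but two of its load-bearing steps do not work as stated. First, your identification of the limit operator keeps the mean curvature term $-\tfrac12 c(\mathcal{N})$ and asserts that ``the remaining zeroth order terms assemble into a symmetric potential $H$.'' Clifford multiplication by a vector field is \emph{skew}-symmetric, so with the naive identification of $\mathcal{S}_i$ with sections over $B$ the mean curvature contribution is an anti-symmetric zeroth-order term, and $\check D^{\mathcal{T}}+H$ is symmetric only with respect to the weighted measure $\vol(Z_p)\,\dvol_B$ (this is exactly Lott's $\chi$). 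The paper removes this obstruction by building the identification $Q\colon L^2(\SpinB\otimes\mathcal{P})\to\mathcal{S}^{\aff}$ with the normalizing factor $\sqrt{\vol(Z_{f(x)})}$ (Lemma \ref{QIsometry}); conjugating by this factor cancels the mean curvature term (Corollary \ref{QDirac}), leaving only $\tfrac12\gamma(\mathcal{Z})+\tfrac12\gamma(\mathcal{A})$, which are genuinely symmetric (triple Clifford products), and this is precisely what makes the limit self-adjoint with respect to the unweighted measure, one of the paper's main points. Without this (or an explicit passage to the weighted $L^2$-space) your $H$ is not a symmetric potential and the claimed self-adjointness of $\mathcal{D}^B$ fails.

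Second, your definition of $\mathcal{S}_i$ as the span of the bounded-eigenvalue modes of the vertical Dirac operator $D^V$ is problematic: the full Dirac operator does not act diagonally with respect to this splitting (horizontal derivatives and the $A$-tensor mix vertical modes), so ``the eigenvalues of $D^{M_i}$ restricted to $\mathcal{S}_i$'' is not well defined without further argument, and the fiberwise spectral subspaces below a cutoff need not have locally constant rank as the fiber metric varies over $B$, so they need not assemble into a bundle $\mathfrak{P}$ at all; moreover on non-flat infranil fibers (e.g.\ the Heisenberg fibers of Example \ref{ExampleHeisenberg}) the low modes of $D^V$ do not coincide with affine parallel spinors, and the nonzero eigenvalues do not simply scale like the inverse fiber diameter. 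The paper instead takes $\mathcal{S}_i^{\aff}$ to be the affine parallel spinors: this space is preserved exactly by $\tilde D^{M_i}$ (the Dirac operator commutes with $\nabla^{\aff}$ for the affine-parallel metrics), and the affine structure group guarantees that the fiberwise spaces $\mathcal{P}_p$ are all isomorphic, giving a genuine bundle $\mathcal{P}$. Finally, the $C^{0,\alpha}$-convergence of the twisting connection, of $\mathcal{Z}_i$ and of $\mathcal{A}_i$ is not ``by the regularity of Fukaya's smoothing'': it requires the uniform $C^1$-bounds of Lemmas \ref{BoundConnV}, \ref{BoundFiberCon}, \ref{BoundATensor} (derived from O'Neill's formulas) plus the compact embedding $C^1\hookrightarrow C^{0,\alpha}$ to extract a subsequence, and one must first transplant the operators $\mathfrak{D}_i$, which live on $i$-dependent bundles, onto a fixed Hilbert space (the paper does this via the $\Spin(n+k)$-principal bundles, equivariant Fukaya fibrations and the Maier isometries) before spectral convergence can even be formulated.
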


In fact, we will give a complete description of the twisted Clifford bundle $\mathfrak{P}$ and of the potential $H$, see Theorem \ref{MainTheorem}. We show that the following three geometric objects of the fiber bundles $f_i: M_i \rightarrow B$ contribute to $\mathcal{D}^B$: The holonomy of the vertical distribution, the integrability of the horizontal distributions and the intrinsic curvature of the fibers. These three different conditions are independent from each other as can be seen in the Examples \ref{ExampleWithoutHolonomy}, \ref{ExampleWithHolonomy}, \ref{ExampleHeisenberg}, \ref{ExampleNonVanishingA}. As a corollary we  identify collapsing sequences where $\mathcal{D}^B$ is the Dirac operator on $B$. 

The paper is structured as follows.  In Section \ref{Section GeometryAffineFiberBundles} we explain how the results of \cite{CheegerFukayaGromov} imply that any collapsing sequence in $\mfdspace$ can be approximated by a sequence of fiber bundles whose fibers are infranilmanifolds $Z$ and the structure group lies in group of the affine diffeomorphisms, $\Aff(Z)$. Thus, we study  the geometry of these fiber bundles in great detail.  In Section \ref{SectionAffineFiberBundles} we first show that the spin structure on the total space of a fibration $M \rightarrow B$ does not imply that $B$ has a spin structure or is even orientable. Nevertheless we show how the space of ``invariant'' spinors on $M$ can be realized as spinors of a twisted spinor bundle over $B$.  In Section \ref{SectionMainTheorem} we combine all the results of the previous sections to prove the main result about the characterization of the convergent part of the Dirac spectra. As a conclusion we characterize the special case, where the convergent part of the Dirac spectra converges to  the spectrum of the Dirac operator on the limit space. 

At this point we want to remark that this paper is an excerpt of the author's thesis \cite{RoosDiss}.

\subsection*{Acknowledgments}
I would like to thank my supervisors Werner Ballmann and Bernd Ammann for many enlightning discussions. My great thanks also go to Andrei Moroianu for his invitation to Orsay and his good explanations about spin structures on fiber bundles. I also thank Alexander Strohmaier for showing me how eigenvalues can be computed numerically. Furthermore, I thank the Max-Planck Institute for providing excellent working conditions. This research was financed by the Hausdorff Research Institute for Mathematics.

\section{Geometry of Riemannian affine fiber bundles} \label{Section GeometryAffineFiberBundles}

Cheeger, Fukaya, and Gromov studied the structure of collapsing sequences with bounded sectional curvature in great detail and full generality, see \cite{CheegerFukayaGromov} and references therein.  Restricted to our special case of sequences $(M_i, g_i)_{i \in \bbN}$ in $\mfdspacenk$ converging to an $n$-dimensional Riemannian manifold $(B,h)$ we conclude from \cite{CheegerFukayaGromov} the following

\begin{thm}\label{ThmInvariantMetrics}
Let $(M_i,g_i)_{i \in \bbN}$ be a collapsing sequence in $\mathcal{M}(m,d)$. Suppose that this sequence converges to a Riemannian manifold $(B,h)$ with respect to the Gromov-Hausdorff topology. Then, for any $i$ sufficiently large, there is a metric $\tilde{g}_i$ on $M_i$ and a metric $\tilde{h}_i$ on $B$ such that
\begin{align*}
\lim_{i \rightarrow \infty} \Vert g_i - \tilde{g}_i \Vert_{C^1} &= 0, \\
 \lim_{i \rightarrow \infty} \Vert h - \tilde{h}_i \Vert_{C^1}&= 0,
\end{align*}
and $f_i: (M_i, \tilde{g}_i) \rightarrow (B, \tilde{h}_i)$ is a \textit{ Riemannian affine fiber bundle}, i.e.\
\begin{itemize}
	\item $f_i$ is a Riemannian submersion,
	\item for each $p$ the fiber $Z_p \coloneqq f_i^{-1}(p)$ is an infranilmanifold with an induced affine parallel metric $\hat{g}_p$,
	\item the structure group lies in $\Aff(Z)$.
\end{itemize}
Moreover, the second fundamental forms of the fibers of the Riemannian subermsions $f_i$ are uniformly bounded in norm by a positive constant $C(m)$ and the sectional curvature of $(M_i, \tilde{g}_i)_{i \in \bbN}$ are uniformly bounded by a positive constant $K(m)$.
\end{thm}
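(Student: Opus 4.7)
The plan is to extract this statement from the structure theory of collapsed manifolds developed by Cheeger, Fukaya and Gromov, which already contains essentially all the ingredients; the work lies in combining them consistently for a single sequence of smoothed metrics.

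First I would apply the smoothing and fibration theorem of \cite{CheegerFukayaGromov}: for every $\eps > 0$ and all $i$ sufficiently large, there exist metrics $\tilde{g}_i$ on $M_i$ with $\Vert g_i - \tilde{g}_i \Vert_{C^1} < \eps$ and a global nilpotent Killing structure $\mathcal{N}_i$ acting by local isometries, whose orbits form a smooth fibration $f_i \colon M_i \to B_i$ onto a smoothed base. Because the Gromov--Hausdorff limit is the $n$-dimensional Riemannian manifold $(B,h)$, the orbit dimension is constantly $k$ for large $i$ and $B_i$ is diffeomorphic to $B$, so a diagonal argument with $\eps_i \to 0$ produces the desired subsequence and canonically identifies each base with $B$. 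The limit metric $\tilde h_i$ on $B$ is then taken to be the pushforward $(f_i)_\ast \tilde{g}_i$, which by the construction of $\tilde g_i$ is $C^1$-close to $h$.

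Next I would verify the Riemannian affine fiber bundle properties. By construction $f_i$ is a Riemannian submersion with respect to $\tilde{g}_i$ and $\tilde{h}_i$. Invariance of $\tilde{g}_i$ under $\mathcal{N}_i$ forces each fiber $Z_p = f_i^{-1}(p)$ to be an infranilmanifold carrying a metric parallel for the canonical flat affine connection on $Z$, because an invariant metric on an infranilmanifold is unique up to $\Aff(Z)$ and coincides with the left-invariant metric pulled back from the nilpotent cover. The associated transition cocycle preserves this affine parallel structure, so the structure group reduces to $\Aff(Z)$. The uniform bound on the second fundamental forms of the fibers and on the sectional curvature of $(M_i, \tilde g_i)$ by constants $C(m), K(m)$ depending only on $m$ is built into the CFG regularization procedure, and the $\mathcal{N}_i$-invariance of the horizontal distribution guarantees that the O'Neill tensors inherit the same uniform control.

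The main obstacle is the bookkeeping required to realise all three conditions (Riemannian submersion, affine parallel fibers, $\Aff(Z)$-structure group) simultaneously on a single sequence of smoothed metrics. CFG phrase their theorems in terms of local fibrations compatible with mutually overlapping charts of an $\mathcal{N}$-structure, and one must verify that these local pieces glue into a genuinely global fibration $f_i$ with base $B$ for which $\tilde{g}_i$ is \emph{exactly} submersion-invariant rather than merely $\eps$-invariant. The smoothness of the limit $B$ is crucial at this stage, since it rules out the singular strata of $\mathcal{N}_i$ that obstruct a clean global fibration in the general case and would otherwise force one to work with singular fibrations on $B$.
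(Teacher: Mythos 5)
Your proposal is correct in spirit and draws on the same body of results, but it runs the argument in a different order than the paper, and the difference matters for which steps come for free. The paper first invokes Fukaya's fibration theorem (\cite[Theorem 10.1]{FukayaBoundary}) to obtain, for the \emph{original} metrics $g_i$, a genuine fiber bundle $f_i\colon M_i \rightarrow B$ over the fixed limit manifold with infranil fibers, and to record that $h$ is $C^{1,\alpha}$ -- a point you skip but which is needed for the statement $\Vert h - \tilde h_i\Vert_{C^1}\rightarrow 0$ to be meaningful. It then smooths $g_i$ by Abresch's theorem to gain bounds on all covariant derivatives of curvature (the regularity hypothesis for the CFG invariantization), controls the change of sectional curvature by Rong's estimate, and only then applies \cite[Propositions 3.6 and 4.9]{CheegerFukayaGromov} to the already-fixed fibration, which produces the invariant metric $\tilde g_i$, the affine parallel fiber metrics, the $\Aff(Z)$ structure group, and the uniform bound $C(m)$ on the second fundamental forms; a diagonal argument in $\delta$ finishes. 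Your route instead calls on the global nilpotent Killing structure theorem and then tries to recover the fibration over $B$ from the orbits of $\mathcal{N}_i$. The two claims you assert there -- that the orbit dimension is constant and that the orbit space $B_i$ can be identified with $B$, and that the local pieces glue to a global submersion -- are exactly the content of Fukaya's fibration theorem in this smooth-limit setting, so in practice your argument would circle back to the same citation the paper uses at the outset; fixing the fibration first, as the paper does, avoids both the identification of an abstract base and the discussion of singular strata. Your remaining verifications (invariant fiber metrics are affine parallel because they lift to left-invariant metrics on the nilpotent cover, hence the structure group reduces to $\Aff(Z)$; the curvature and second fundamental form bounds are part of the CFG regularization) are accurate, though the paper obtains the uniform sectional curvature bound explicitly through the Abresch--Rong comparison rather than as a black-box feature of the regularization.
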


We shortly recall that an \textit{infranilmanifold} $Z$ is a compact quotient  $\mfaktor{\Gamma}{N}$ of a connected simply-connected nilpotent Liegroup $N$ by a discrete subgroup $\Gamma$ of $\Aff(N) = N_L \rtimes \Aut(N)$, where $N_L$ is the group of left translations acting on $N$ and $\Aut(N)$ is the automorphism group. It follows that the canonical affine connection $\nabla^{\aff}$ on $N$, for which all left invariant vector fields are parallel, descends to a connection $\nabla^{\aff}$ on $Z$. The group $\Aff(Z)$ consists of those diffeomorphisms of $Z$ that preserve $\nabla^{\aff}$ and  we call a tensor field on $Z$ \textit{affine parallel} if it is parallel with respect to $\nabla^{\aff}$. For a thorough introduction to infranilmanifolds we refer to \cite{Dekimpe}, see also \cite[Section 3]{CheegerFukayaGromov}, \cite[Section 3]{LottLaplaceSmooth}.

\begin{proof}[Proof of Theorem \ref{ThmInvariantMetrics}]
By \cite[Theorem 10.1]{FukayaBoundary} the metric $h$ on $B$ is $C^{1, \alpha}$. Furthermore, for $i$ sufficiently large, there is a fibration $f_i: (M_i,g_i) \rightarrow (B,h)$ such that the fiber is an infranilmanifold $Z_i$. Next, we apply the smoothing result by Abresch (see for instance \cite[Theorem 1.12]{CheegerFukayaGromov}): For any positive $\delta$ there is a smooth Riemannian metric $\bar{g}$ such that
\begin{itemize}
\item $\Vert g - \bar{g} \Vert_{C^1} < \delta$,
\item $\Vert \bar{\nabla}^i \bar{R} \Vert_{C^0} < A_i(m,\delta)$ for some positive constants $A_i(m,\delta)$.
\end{itemize}
By \cite[Proposition 2.5]{RongSectionalCurv} there is a positive constant $c(m)$ such that
\begin{align*}
\vert \sec(g) - \sec(\bar{g}) \vert \leq c(m)\delta
\end{align*}
for any sufficiently small positive $\delta$. Now we can apply \cite[Proposition 3.6 and 4.9]{CheegerFukayaGromov} to the fibration $f_i:(M_i, \bar{g}_i) \rightarrow (B,h)$. We obtain another metric $\tilde{g}_i$ with $\Vert \bar{g} - \tilde{g} \Vert_{C^1} \leq \tilde{C}(m) \dGH(M,B)$,  for a positive constant $\tilde{C}(m)$, such that $f_i:(M_i, \tilde{g}_i) \rightarrow (B,\tilde{h}_i)$ is a Riemannian affine fiber bundle for an induced metric $\tilde{h}_i$ and such that the second fundamental forms of fiber is bounded by a constant $C(m)$. Letting $\delta$ go to $0$ as $i$ goes to infinity concludes the proof. 
\end{proof} 

In particular, it suffices to study collapsing sequences of Riemannian affine fiber bundles because Dirac eigenvalues are continuous under a $C^1$-change of the metric \cite[Main Theorem 2]{Nowaczyk}.

\subsection{Useful operators on Riemannian affine fiber bundles}\label{SectionOperatorsRiemannianAffine}

Let $f: (M,g) \rightarrow (B,h)$ be a Riemannian affine fiber bundle with infranil fiber $Z$. From now on, we set $\dim(B) = n$ and $\dim(Z) = k$ for positive integers $n$, $k$. Since $f$ is a Riemannian submersion $TM = \mathcal{H} \oplus \mathcal{V}$, where $\mathcal{H}$ is the horizontal distribution isometric to $f^{\ast}TB$ and $\mathcal{V} = \ker(df )$ is the vertical distribution. The relations between the curvatures of $(M,g)$,$(B,h)$ and the fibers $(Z,\hat{g})$ are given by O'Neill's formulas, see for instance \cite[Theorem 9.28]{Besse}. These formulas involve the two tensors $T$ and $A$ defined via
\begin{align*}
T(X,Y) &\coloneqq \left(\nabla_{X^V} Y^V \right)^H + \left( \nabla_{X^V} Y^H \right)^V, \\
A(X,Y) &\coloneqq \left(\nabla_{X^H} Y^V \right)^H + \left( \nabla_{X^H} Y^H \right)^V,
\end{align*}
for all vector fields $X$, $Y \in \Gamma(TM)$. Here $X^V, X^H$ denote the vertical, resp.\ horizontal part. Roughly speaking, the $T$-tensor is related to the second fundamental form of the fibers and the $A$-tensor vanishes if and only if the horizontal distribution $\mathcal{H}$ is integrable. In what follows many calculations are carried out in a local orthonormal frame chosen as follows:

\begin{defi}\label{SplitFrame}
Let $f:(M,g) \rightarrow (B,h)$ be a Riemannian affine fiber bundle. For all $x \in M$ a local orthonormal frame $(\xi_1, \ldots, \xi_n, \zeta_1, \ldots, \zeta_k)$ is a \textit{split orthonormal frame} if $(\xi_1, \ldots, \xi_n)$ is the horizontal lift of a local orthonormal frame $(\check{\xi}_1, \ldots, \check{\xi}_n)$ around $p = f(x) \in B$ and $(\zeta_1, \ldots, \zeta_k)$ are locally defined affine parallel vector fields tangent to the fibers. 
\end{defi}

Henceforth we label the vertical components  $a, b, c, \ldots$, and the horizontal components $\alpha, \beta, \gamma, \ldots$. The Christoffel symbols with respect to a split orthonormal frame $(\xi_1, \ldots, \xi_n, \zeta_1, \ldots, \zeta_k)$ can be calculated with the Koszul formula:
\begin{equation}\label{ChristoffelSymbols}
	\begin{gathered}
		\Gamma_{ab}^c = \hat{\Gamma}_{ab}^c, \\
		\Gamma_{ab}^{\alpha} = - \Gamma_{a \alpha}^b = g(T(\zeta_a,\zeta_b), \xi_{\alpha} ), \\
		\Gamma_{\alpha a}^b = g( [\xi_{\alpha}, \zeta_a], \zeta_b ) + g(T(\zeta_a, \xi_{\alpha}), \zeta_b ),\\
		\Gamma_{\alpha \beta}^a = - \Gamma_{\alpha a}^{\beta} = - \Gamma_{a \alpha}^{\beta} = g(A(\xi_{\alpha}, \xi_{\beta}), \zeta_a ), \\
		\Gamma_{\alpha \beta}^{\gamma} = \check{\Gamma}_{\alpha \beta}^{\gamma}.
	\end{gathered}
\end{equation}
Here $\hat{\Gamma}_{ab}^c$ are the Christoffel symbols of the fiber $(Z, \hat{g})$ and $\check{\Gamma}_{\alpha \beta}^{\gamma}$ are the Christoffel symbols of $(B,h)$. 

\begin{rem}
The above equations for the Christoffel symbols hold for any Riemannian submersion $f: M \rightarrow B$ if we choose the orthogonal frame similar to Definition \ref{SplitFrame}. The only difference is that we only assume that $(\zeta_1, \ldots, \zeta_k)$ is an orthonormal family of vector fields tangent to the fibers.
\end{rem}

For later use we consider the following two operators characterized by their action on vector fields $X, Y$.
\begin{equation*}
\begin{aligned}
\nabla^Z_{X}Y \coloneqq \left( \nabla_{X^V} Y^V \right)^V, \\
\nabla^{\mathcal{V}}_X Y \coloneqq \left( \nabla_{X^H} Y^V \right)^V. 
\end{aligned}
\end{equation*}
We observe that for each $p \in B$, $\nabla^Z$ restricted to a fiber $Z_p$ is the Levi-Civita connection with respect to the induced metric $\hat{g}_p$.  Since $\hat{g}_p$ is by assumption affine parallel, it follows that $\nabla^Z$ preserves the space of affine parallel vector fields. The difference $\mathcal{Z} \coloneqq \nabla^Z - \nabla^{\aff}$ is a one-form with values in $\End(TZ)$, where we view $TZ$ as a vector bundle over $M$. Further,  $\mathcal{Z} = 0$ if and only if the induced metric $\hat{g}_p$ is flat for all $p \in B$. Next, we interpret $\nabla^{\mathcal{V}}$ as a connetion of the vertical distribution $\mathcal{V}$. Observe that $\nabla^{\mathcal{V}}$ preserves the space of affine parallel vector fields.

As any affine vector field on an infranilmanifold  $Z= \mfaktor{\Gamma}{N}$ lifts to a left invariant vector field on the universal cover $N$, it follows that the space of affine parallel vector fields on $Z$  is finite dimensional. Thus, there is a vector bundle $P$ over $B$ such that, for any $p \in B$, the fiber $P_p$ is given by all affine parallel vector fields of the infranilmanifold $Z_p$. By the discussion above we conclude that $\mathcal{Z}$ descends to a well-defined operator on $P$ and $\nabla^{\mathcal{V}}$ induces a connection of $P$. In addition, there is an $\mathcal{A} \in \Omega^2(B, P)$ characterized by 
\begin{align*}
\mathcal{A}(X,Y) = A(\tilde{X},\tilde{Y}),
\end{align*}
for any vector fields $X,Y \in \Gamma(TB)$. Here $\tilde{X}$ denotes the horizontal lift. 

\subsection{Uniform estimates on Riemannian affine fiber bundles}

In the proof of Theorem \ref{ThmIntroduction} it will be shown that exactly the three operators $\nabla^{\mathcal{V}}$, $\mathcal{Z}$ and $\mathcal{A}$, introduced above, contribute additionally to the limit of Dirac operators on a collapsing sequence of spin manifolds in $\mfdspacenk$. To ensure the continuity of the corresponding spectra, we will choose subsequence such that these three operators converge  on $B$ in the $C^0$-topology. Our strategy is to prove uniform $C^1(B)$-bounds. Then we use the compact embedding $C^1\hookrightarrow C^{0,\alpha}$, for $\alpha \in [0,1)$ to choose strong $C^{0,\alpha}$-convergent subsequences. 

Let $f: (M,g) \rightarrow (B,h)$ be a Riemannian affine fiber bundle. The $C^1(B)$-bounds on $\nabla^{\mathcal{V}}$, $\mathcal{Z}$,  and $\mathcal{A}$ will depend on the following three bounds
\begin{align*}
\Vert A \Vert_{\infty} \leq C_A, \quad 
\Vert T \Vert_{\infty} \leq C_T, \quad 
\Vert R^M \Vert_{\infty} \leq C_R.
\end{align*}
The next lemma shows that such constants exist uniformly for any sequence $(M_i,g_i)_{i \in \bbN}$ in $\mfdspacenk$ converging to an $n$-dimensional Riemannian manifold $(B,h)$. In particular, we can restrict without loss of generality to the case of collapsing Riemannian affine fiber bundles as all of the bounds derived in the next sections are also valid in the case of collapsing sequences in $\mfdspacenk$ with smooth $n$-dimensional limit space.

\begin{lem}\label{SubmersionBounds}
Let $(M_i, g_i)_{i \in \bbN}$ be a sequence in $\mfdspacenk$ converging to an $n$-dimensional Riemannian manifold $(B,h)$. Then there is an index $I$ such that for all $i \geq I$ there are metrics $\tilde{g}_i$ on $M_i$ and $\tilde{h}_i$ on $B$ such that $f_i:(M_i, \tilde{g}_i) \rightarrow (B,\tilde{h}_i)$ is a Riemannian affine fiber bundle and
\begin{equation}\label{MetricLimits}
\begin{aligned}
\lim_{i \rightarrow \infty} \Vert \tilde{g}_i - g_i  \Vert_{C^1} &= 0,\\
\lim_{i \rightarrow \infty} \Vert \tilde{h}_i - h \Vert_{C^1} &= 0.
\end{aligned} 
\end{equation}
In particular, there is a positive constant $C_R(n+k)$, such that $\vert \sec^{\tilde{g}_i} \vert \leq C_R$ for all $i \geq I$. Moreover, there are positive constants $C_A(n,k,B)$, $C_T(n+k)$ such that the fundamental tensors $A_i$ and $T_i$ of the Riemannian submersion $f_i:(M_i, \tilde{g}_i) \rightarrow (B,\tilde{h}_i)$ are uniformly bounded in norm, i.e.\ for all $i \geq I$,
\begin{align*}
\Vert A_i \Vert_{\infty} &\leq C_A,\\
\Vert T_i \Vert_{\infty} &\leq C_T.
\end{align*}
\end{lem}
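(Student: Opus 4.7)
My plan is to read most of this lemma directly off Theorem~\ref{ThmInvariantMetrics} and concentrate the real work on the $A$-bound. First, I would apply Theorem~\ref{ThmInvariantMetrics} to the collapsing sequence $(M_i, g_i)$. This immediately produces the index $I$, the metrics $\tilde g_i$ on $M_i$ and $\tilde h_i$ on $B$ realising each $f_i$ as a Riemannian affine fiber bundle, the $C^1$-convergences in (\ref{MetricLimits}), and the uniform bound $|\sec^{\tilde g_i}|\le K(n+k)$, which I would rename $C_R(n+k)$.

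The $T_i$-bound is essentially bookkeeping. With both inputs vertical, $T_i(U,V)=(\nabla_U V)^H$ is the second fundamental form of the fiber, which is uniformly bounded by $C(n+k)$ by the last assertion of Theorem~\ref{ThmInvariantMetrics}. The mixed component $T_i(U,X)$ with $U$ vertical and $X$ horizontal satisfies the O'Neill adjointness relation $\langle T_i(U,V),X\rangle = -\langle V, T_i(U,X)\rangle$, so the operator $V\mapsto T_i(U,V)$ and $X\mapsto T_i(U,X)$ have the same norm. Since $T_i$ vanishes whenever the first input is horizontal, this already gives $\|T_i\|_\infty \le C_T(n+k)$.

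The substantive step is the $A_i$-bound. Here I would invoke O'Neill's formula for horizontal planes: for orthonormal horizontal vectors $X,Y$,
\begin{equation*}
|A_i(X,Y)|^2 \;=\; \tfrac{1}{3}\bigl(\sec^{\tilde h_i}(f_{i\ast}X,f_{i\ast}Y) - \sec^{\tilde g_i}(X,Y)\bigr).
\end{equation*}
The second term is controlled by $C_R$. A duality argument identical in form to the one used for $T_i$ shows that the mixed component $A_i(X,U)$ (with $U$ vertical) satisfies $\langle A_i(X,U),Y\rangle = -\langle U, A_i(X,Y)\rangle$, so a sup-norm bound on $A_i$ restricted to horizontal pairs controls the full tensor. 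It therefore remains to produce a uniform upper bound $|\sec^{\tilde h_i}|\le C_B(n,k,B)$, after which we set $C_A(n,k,B)\coloneqq \tfrac{1}{\sqrt{3}}(C_B+C_R)^{1/2}$.

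The main obstacle is precisely this last step: the stated $C^1$-convergence $\tilde h_i\to h$ does \emph{not} by itself imply any control on $\sec^{\tilde h_i}$. I would therefore appeal to the finer regularity built into the Cheeger--Fukaya--Gromov construction used in the proof of Theorem~\ref{ThmInvariantMetrics} -- namely, that $\tilde h_i$ is obtained by the averaging/invariant-metric procedure of \cite[Propositions~3.6, 4.9]{CheegerFukayaGromov} applied to the Abresch-smoothed ambient metric on $M_i$, which carries a priori bounds on all derivatives of curvature, and that these bounds descend through the Riemannian submersion to curvature bounds on $(B,\tilde h_i)$ depending only on $n$, $k$, and on the fixed limit manifold $B$. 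Compactness of $B$ and the $C^1$-proximity to $h$ are used to localise and stabilise these bounds uniformly in $i\ge I$, completing the argument.
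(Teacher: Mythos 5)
Your reduction to Theorem~\ref{ThmInvariantMetrics} for the metrics, the $C^1$-limits, the curvature bound $C_R$, and the $T$-bound coincides with the paper's argument, and your use of O'Neill's formula to convert a bound on $\sec^{\tilde h_i}$ into the bound on $A_i$ is also the paper's mechanism (the paper uses the summed version \cite[9.29c]{Besse}, giving $\Vert A_i\Vert_\infty^2 \le \tfrac{n(n-1)}{6}(\vert\sec^{\tilde g_i}\vert + \vert\sec^{\tilde h_i}\vert)$, rather than your pointwise plane-by-plane identity, but that is cosmetic). The problem is the step you yourself identify as the main obstacle: you do not actually prove the uniform bound $\vert\sec^{\tilde h_i}\vert \le \Lambda_2(n+k,B)$, and the argument you sketch for it does not work as stated. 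Saying that the a priori curvature bounds on the smoothed ambient metrics ``descend through the Riemannian submersion to curvature bounds on $(B,\tilde h_i)$'' is circular: by O'Neill, $\sec^{\tilde h_i}(X,Y) = \sec^{\tilde g_i}(\tilde X,\tilde Y) + 3\vert A_i(\tilde X,\tilde Y)\vert^2$, so an upper bound on the base curvature in terms of the total space requires precisely the bound on $A_i$ that the lemma is trying to establish. Likewise, as you note, the $C^1$-proximity $\tilde h_i \to h$ gives no curvature control, so ``compactness of $B$ plus $C^1$-closeness'' cannot stabilise anything here.

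The paper closes this gap by a different mechanism, which is absent from your proposal: it passes to the orthonormal frame bundles and uses Fukaya's equivariant theory. By \cite[Theorem 10.1]{FukayaBoundary} there is, for each $i\ge I$, a Riemannian manifold $(\tilde B, \tilde h_i^F)$ with an isometric $\mathrm{O}(n+k)$-action whose quotient is isometric to $(B,\tilde h_i)$, and by \cite[Theorem 6.1]{FukayaBoundary} the curvatures $\vert\sec^{\tilde h_i^F}\vert$ are uniformly bounded by a constant $\Lambda_1(n+k)$; since the quotient is the fixed closed manifold $B$, this yields a uniform bound $\Lambda_2(n+k,B)$ on $\vert\sec^{\tilde h_i}\vert$, and this is exactly where the dependence of $C_A$ on $B$ enters. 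If you want to avoid the frame-bundle detour you would instead have to extract an explicit $A$-tensor (or base-curvature) bound from the Cheeger--Fukaya--Gromov construction \cite[Propositions 3.6 and 4.9]{CheegerFukayaGromov} itself, but then you must quote or prove such a statement; as written, your appeal to ``finer regularity'' is an assertion of the conclusion rather than a proof of it.
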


\begin{proof}
Applying Theorem \ref{ThmInvariantMetrics}, there is an index $I$ such that for all $i \geq I$ there are metrics $\tilde{g}_i$ on $M_i$, and $\tilde{h}_i$ on $B$ such that $f_i: (M_i, \tilde{g}_i) \rightarrow (B,\tilde{h}_i)$ is a Riemannian affine fiber bundle and the metrics $(\tilde{g}_i)_{i \geq I}$ and $(\tilde{h}_i)_{i \geq I}$ satisfy \eqref{MetricLimits}. Moreover, there is a positive constant $C_R(n+k)$ such that  $\vert \sec^{\tilde{g}_i} \vert \leq C_R$ for all $i\geq I$ and a uniform bound $C_T(n+k)$ on the $T$-tensor. 

It follows from \cite[Theorem 10.1]{FukayaBoundary} that for all $i \geq I$ there is a Riemannian manifold $(\tilde{B}, \tilde{h}_i^F)$ with an isometric $\rO(n+k)$-action such that $\faktor{\tilde{B}}{\rO(n+k)}$ is isometric to $(B, \tilde{h}_i)$. Furthermore, there is a $\Lambda_1(n+k) > 0$ such that $\vert \sec^{\tilde{h}_i^F} \vert \leq \Lambda_1$ for all $i \geq I$, \cite[Theorem 6.1]{FukayaBoundary}. Since $\faktor{\tilde{B}}{\rO(n+k)}$ is a closed Riemannian manifold, there is a $\Lambda_2(n+k,B) > 0$ such that the sectional curvature of $(B, \tilde{h}_i)$ is uniformly bounded, i.e.\ $\vert \sec^{\tilde{h}_i} \vert \leq \Lambda_2$ for all $i \geq I$. Thus, it follows via O'Neill's formula \cite[Corollary 9.29c]{Besse} that
\begin{align*}
\Vert A_i \Vert^2_{\infty} \leq \frac{n(n-1)}{6} (\vert \sec^{\tilde{g}_i}\vert + \vert \sec^{\tilde{h}_i}\vert ) \leq  \frac{n(n-1)}{6} (C_R(n) + \Lambda_2 ) \eqqcolon C_A^2.
\end{align*}
\end{proof}

\subsubsection{Uniform bounds for $\nabla^{\mathcal{V}}$}

First we deal with $\nabla^{\mathcal{V}}$. Let $(e_1, \ldots, e_k)$ be a local affine parallel frame for the vertical distribution $\mathcal{V}$ such that $[X, e_a] = 0$ for any basic vector field $X$. We write $\langle . , . \rangle$ for a locally defined metric on $\mathcal{V}$ characterized by $\langle e_a, e_b \rangle = \delta_{ab}$. There is a unique positive definite symmetric operator $W$ satisfying
\begin{align}\label{DefWvertmetric}
g(U,V) = \langle W(U), W(V) \rangle,
\end{align}
for all vertical vector fields $U, V$. Recall that the induced metric on the fiber is affine parallel. Hence, $W$ is affine parallel as well. Let $(\xi_1, \ldots, \xi_n, \zeta_1, \ldots, \zeta_k)$ be a split orthonormal frame where $\zeta_a \coloneqq W^{-1}(e_a)$. A short computation shows that
\begin{align*}
g(T(\zeta_a, \xi_{\alpha}), \zeta_b) = \frac{1}{2} \langle(W^{-1}\xi_{\alpha}(W) + \xi_{\alpha}(W)W^{-1} )e_a , e_b\rangle.
\end{align*}
Thus,
\begin{align*}
\Gamma_{\alpha a}^b &= g( [\xi_{\alpha}, \zeta_a], \zeta_b ) + g(T(\zeta_a, \xi_{\alpha}), \zeta_b )\\
&= g( \xi_{\alpha}(W^{-1}) e_a, \zeta_b) + \frac{1}{2} \langle(W^{-1}\xi_{\alpha}(W) + \xi_{\alpha}(W)W^{-1} )e_a , e_b\rangle \\
&= \frac{1}{2}\langle (W^{-1} \xi_{\alpha}(W) - \xi_{\alpha}(W) W^{-1}) e_a, e_b \rangle \eqqcolon \langle \mathcal{W}_{\xi_{\alpha}} e_a, e_b \rangle.
\end{align*}
By abuse of notation, we use the same letter $\mathcal{W}$ for the connection one-form of  $\nabla^{\mathcal{V}}$. To obtain a $C^1(B)$-bound for  $\nabla^{\mathcal{V}}$, it suffices to derive a uniform $C^1$-bound on the connection form $\mathcal{W}$. 

\begin{lem}\label{BoundConnV}
Let $f: (M,g) \rightarrow (B,h)$ be a Riemannian affine fiber bundle such that
\begin{align*}
\Vert A \Vert_{\infty} \leq C_A, \quad 
\Vert T \Vert_{\infty} \leq C_T, \quad 
\Vert R^M \Vert_{\infty} \leq C_R,
\end{align*}
then
\begin{align*}
\Vert \mathcal{W}_{X} \Vert_{\infty} &\leq 2 C_T \Vert X \Vert_{\infty}, \\
\Vert Y (\mathcal{W}_{\Vert X \Vert)} )\Vert_{\infty} &\leq C(C_T, C_A, C_R ) \Vert Y\Vert_{\infty} \Vert X \Vert_{\infty},
\end{align*}
for any basic vector fields $X, Y$.
\end{lem}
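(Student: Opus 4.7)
The plan is to exploit the explicit formula $\mathcal{W}_{\xi_\alpha} = \tfrac{1}{2}(W^{-1}\xi_\alpha(W) - \xi_\alpha(W)W^{-1})$ derived just before the lemma, together with the identification of its symmetric counterpart $S_\alpha \coloneqq \tfrac{1}{2}(W^{-1}\xi_\alpha(W) + \xi_\alpha(W)W^{-1})$ with the $T$-tensor via $\langle S_\alpha e_a, e_b\rangle = g(T(\zeta_a,\xi_\alpha),\zeta_b)$. The key observation is that positive-definiteness of $W$ couples $S_\alpha$ and $\mathcal{W}_{\xi_\alpha}$ in such a way that $\mathcal{W}_{\xi_\alpha}$ is controlled entry-wise by $S_\alpha$, and hence by $T$.

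For the first bound I would work pointwise in an eigenbasis of $W$. If $W$ has eigenvalues $\lambda_1,\ldots,\lambda_k$ at the point in question, then the Sylvester-type identity $\xi_\alpha(W^2) = W\xi_\alpha(W) + \xi_\alpha(W) W = 2W S_\alpha W$ yields $(\xi_\alpha(W))_{ij} = \frac{2\lambda_i\lambda_j}{\lambda_i+\lambda_j}(S_\alpha)_{ij}$ in this basis, and substitution gives
\begin{align*}
(\mathcal{W}_{\xi_\alpha})_{ij} = \frac{\lambda_j - \lambda_i}{\lambda_i+\lambda_j}\,(S_\alpha)_{ij}.
\end{align*}
Since $|\lambda_j - \lambda_i|/(\lambda_i+\lambda_j) \leq 1$, every matrix entry of $\mathcal{W}_{\xi_\alpha}$ is dominated by the corresponding entry of $S_\alpha$; combined with the $T$-identity and $\|T\|_\infty \leq C_T$, this yields the bound $\|\mathcal{W}_X\|_\infty \leq 2 C_T \|X\|_\infty$, the factor $2$ being a loose conversion between the Frobenius-type and operator norms.

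For the second bound I would differentiate the formula for $\mathcal{W}_X$ along a basic vector field $Y$, using $\xi_\beta(W^{-1}) = -W^{-1}\xi_\beta(W)W^{-1}$. The Leibniz expansion produces two kinds of terms: quadratic contributions of the schematic form $(W^{-1}\xi(W))(W^{-1}\xi(W))$ and cyclic variants, each controlled by Step~1 and therefore by $C_T^2$; and a new second-derivative term $W^{-1}YX(W)$. Differentiating the Sylvester identity once more expresses $YX(W)$ through $Y(S_X)$ plus quadratic $\xi(W)$-terms, and $Y(S_X)$ is a basic derivative of the $T$-tensor, which via the standard O'Neill/second Bianchi identities for Riemannian submersions is controlled by the Riemann tensor $R^M$ together with $T$ and $A$ (the $A$-tensor entering through the vertical part of the commutator $[Y,X]$). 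Collecting the estimates produces the $C^0$-bound with a constant $C(C_T, C_A, C_R)$.

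The hard part is Step~2: the second-derivative term $W^{-1}YX(W)$ must \emph{not} be estimated by the naive product $\|W^{-1}\|\,\|YX(W)\|$, since $\|W^{-1}\|$ diverges as the fibers collapse. Every factor of $W^{-1}$ has to be paired with a factor of $\xi(W)$ before estimation, exactly as in Step~1, so that the pairing is controlled by $T$. Organizing the Leibniz expansion so that this pairing is transparent, while separating the curvature contribution to $\nabla T$ from the $A$- and $T$-contributions coming from commutators of basic vector fields, is the heart of the argument.
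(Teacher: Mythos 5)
Your first bound is correct, and it is a mild pointwise variant of the paper's own argument: the paper expands $\vert T\vert^2$ as a sum of two manifestly nonnegative traces to obtain $\tfrac12\sum_\alpha\Vert W^{-1}\xi_\alpha(W)\Vert^2\le C_T^2$ and then uses the triangle inequality, whereas your Sylvester/eigenbasis identity $(\mathcal{W}_{\xi_\alpha})_{ij}=\tfrac{\lambda_j-\lambda_i}{\lambda_i+\lambda_j}(S_\alpha)_{ij}$ reaches the same conclusion and also yields the pairing bound for $W^{-1}\xi_\alpha(W)$ that is needed later. The point you emphasize -- that $W^{-1}$ must never be estimated alone but always in the combinations $W^{-1}\xi(W)$, $W^{-1}\xi\xi(W)$ -- is real and is respected by the paper as well.

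The gap is in Step 2, precisely at the step you declare standard: the bound on the basic derivative of $T$. O'Neill's identity for the mixed curvature (Besse 9.28c) does \emph{not} bound $g((\nabla_{\xi_\beta}T)(\zeta_i,\zeta_j),\xi_\alpha)$ by $C_R,C_T,C_A$; it expresses it in terms of curvature, quadratic terms, and the \emph{vertical} derivative $g((\nabla_{\zeta_i}A)(\xi_\beta,\xi_\alpha),\zeta_j)$, which is equally unknown. The remaining identities (Besse 9.28d and 9.32) control only the antisymmetrization of $\nabla_\zeta A$ in the vertical slots and identify its symmetrization with the antisymmetrization $\chi_{\alpha,\beta,i,j}$ of $\nabla_\xi T$ in the horizontal slots; as a linear system these three identities leave $\chi$ undetermined, so ``standard O'Neill / second Bianchi'' alone cannot close the estimate. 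The paper closes it by returning to the $W$-representation: by \eqref{Tderivative} the second-derivative content of $\nabla_\xi T$ is the anticommutator of $W^{-1}$ with $\xi_\beta\xi_\alpha(W)$, and since $W$ is affine parallel (hence constant along the fibers in the frame $(e_a)$) one has $[\xi_\alpha,\xi_\beta](W)=0$ at the chosen point, so the second derivatives cancel in $\chi$ and $\Vert\chi\Vert\le 8C_T^2$; feeding this into 9.32 and 9.28d bounds $\nabla_\zeta A$, then 9.28c bounds $\nabla_\xi T$, and \eqref{Tderivative} together with your pairing trick bounds $W^{-1}\xi_\beta\xi_\alpha(W)$. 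Note also that your parenthetical mechanism for how $A$ enters (the vertical part of $[Y,X]$) is not the operative one: that vertical vector annihilates $W$, and $A$ actually enters through $\nabla_\zeta A$ and the quadratic $A$-terms in the curvature identities. To complete your proof you must supply the bound on $\nabla_\zeta A$, equivalently on $\chi$ -- and that is exactly the non-formal step which uses the affine-parallel structure of the fibers.
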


\begin{proof}
Let $( \xi_1, \ldots, \xi_{n}, \zeta_1, \ldots, \zeta_k)$ be a split orthormal frame such that $\zeta_a = W^{-1}e_a$, as above.  In particular, $W$ can be viewed as a field of symmetric positive definite matrices.

For the first inequality, we calculate
\begin{align*}
\vert T \vert^2 &= \sum_{\alpha=1}^n \sum_{a,b=1}^k g(T(\zeta_a, \xi_{\alpha}), \zeta_b)^2 \\
&= \frac{1}{4} \sum_{\alpha=1}^n \sum_{a,b=1}^k \langle (\xi_{\alpha}(W) W^{-1} + W^{-1} \xi_{\alpha}(W)) e_a, e_b \rangle^2 \\
&=  \frac{1}{4}\sum_{\alpha=1}^n \vert \xi_{\alpha}(W) W^{-1} + W^{-1} \xi_{\alpha}(W) \vert^2 \\
&= \frac{1}{2} \sum_{\alpha=1}^n \tr \big( (W^{-1} \xi_{\alpha}(W) )^2 \big) + \tr\big(W^{-2} \xi_{\alpha}(W)^2 \big).
\end{align*}
As $W^{-1}$ and $\xi_{\alpha}(W)$ are symmetric, it follows that
\begin{align*}
\tr\big(W^{-2} \xi_{\alpha}(W)^2 \big) = \vert W^{-1} \xi_{\alpha}(W) \vert^2 \geq 0.
\end{align*}
Since $W^{-1}$ is also symmetric and positive definite it has a unique symmetric positive definite square root $C$, i.e.\ $C^2 = W^{-1}$. Replacing $W^{-1}$ by $C^2$ leads to
\begin{align*}
\tr \big( (W^{-1} \xi_{\alpha}(W) )^2 \big) &= \tr (C^2 \xi_{\alpha}(W) C^2 \xi_{\alpha}(W) \big) = \vert C \xi_{\alpha}(W) C \vert^2 \geq 0.
\end{align*}
Thus, 
\begin{align}\label{BoundedWXW}
\frac{1}{2} \sum_{\alpha =1}^n \Vert W^{-1} \xi_{\alpha}(W) \Vert^2 \leq \Vert T \Vert^2 \leq C_T^2.
\end{align}
It follows immediately that
\begin{align*}
\Vert \mathcal{W}_{\xi_{\alpha}} \Vert_{\infty} = \frac{1}{2} \Vert W^{-1} \xi_{\alpha}(W) - \xi_{\alpha}(W) W^{-1}  \Vert \leq 2 C_T.
\end{align*}

For the second inequality we fix a point $x \in M$. Suppose that $(\xi_1, \ldots, \xi_n)$ is the horizontal lift of an orthonormal frame parallel in $f(x)$. We compute
\begin{align*}
\xi_{\beta} ( \mathcal{W}_{\xi_{\alpha}}) &= \frac{1}{2} \Big( W^{-1} \xi_{\beta}\xi_{\alpha}(W) - \xi_{\beta}\xi_{\alpha}(W) W^{-1}\\
& \qquad \quad + \xi_{\alpha}(W)W^{-1} \xi_{\beta}(W) W^{-1}- W^{-1} \xi_{\beta}(W) W^{-1} \xi_{\alpha}(W)\Big).
\end{align*}
By the inequality \eqref{BoundedWXW}, it remains to bound the second derivatives. 

A straight forward calculation shows that
\begin{equation}\label{Tderivative}
\begin{aligned}
g((\nabla_{\xi_{\beta}}T)(\zeta_a,\zeta_b), \xi_{\alpha} ) &= \frac{1}{2} \langle \Big( \xi_{\beta}\xi_{\alpha}(W) W^{-1} + W^{-1}\xi_{\beta}\xi_{\alpha}(W)\\
& \qquad \quad  +\xi_{\beta}(W)W^{-1}\xi_{\alpha}(W) W^{-1}\\
&\qquad \quad  - W^{-1}\xi_{\beta}(W)W^{-1}\xi_{\alpha}(W) \\
& \qquad \quad  - \xi_{\beta}(W)W^{-1}W^{-1}\xi_{\alpha}(W) \\
& \qquad \quad + W^{-1}\xi_{\alpha}(W)\xi_{\beta}(W)W^{-1} \ \Big) e_a, e_b \rangle
\end{aligned}
\end{equation}
for all $1 \leq a,b \leq k$. Since $\xi_{\beta}\xi_{\alpha}(W) = \xi_{\alpha}\xi_{\beta}(W)$, it follows from \eqref{BoundedWXW} that
\begin{align*}
\chi_{\alpha,\beta,i,j} &\coloneqq g((\nabla_{\xi_{\alpha}} T) (\zeta_i, \zeta_j),\xi_{\beta}) - g((\nabla_{\xi_{\beta}} T) (\zeta_i, \zeta_j),\xi_{\alpha}) 
\end{align*}
is bounded by
\begin{align*}
\Vert \chi_{\alpha,\beta,i,j} \Vert \leq 8C_T^2.
\end{align*}
By \cite[9.32]{Besse},
\begin{align*}
\chi_{\alpha,\beta,i,j} = g((\nabla_{\zeta_i}A)(\xi_{\beta}, \xi_{\alpha}), \zeta_j) + g((\nabla_{\zeta_j}A)(\xi_{\beta}, \xi_{\alpha}), \zeta_i).
\end{align*}
Inserting this equality in \cite[9.28d]{Besse} we obtain 
\begin{align*}
g(R^M(\zeta_j, \zeta_i)\xi_{\beta}, \xi_{\alpha}) &= g((\nabla_{\zeta_i}A)(\xi_{\beta}, \xi_{\alpha}), \zeta_j) - g((\nabla_{\zeta_j}A)(\xi_{\beta}, \xi_{\alpha}), \zeta_i)  \\
& \ \ \ +g(A(\xi_{\beta}, \zeta_j), A(\xi_{\alpha}, \zeta_i)) -g(A(\xi_{\beta}, \zeta_i), A(\xi_{\alpha}, \zeta_j))  \\
&\ \ \  -g(T(\zeta_j, \xi_{\beta}),T(\zeta_i, \xi_{\alpha})) + g(T(\zeta_i, \xi_{\beta}),T(\zeta_j, \xi_{\alpha}))\\
&= 2g((\nabla_{\zeta_i}A)(\xi_{\beta}, \xi_{\alpha}), \zeta_j) + \chi_{\alpha,\beta,i,j}\\
& \ \ \ +g(A(\xi_{\beta}, \zeta_j), A(\xi_{\alpha}, \zeta_i)) -g(A(\xi_{\beta}, \zeta_i), A(\xi_{\alpha}, \zeta_j))  \\
&\ \ \  -g(T(\zeta_j, \xi_{\beta}),T(\zeta_i, \xi_{\alpha})) + g(T(\zeta_i, \xi_{\beta}),T(\zeta_j, \xi_{\alpha})).
\end{align*}
Thus,
\begin{align*}
\vert g((\nabla_{\zeta_i}A)(\xi_{\beta}, \xi_{\alpha}), \zeta_j) \vert &\leq \frac{1}{2} C_R + 5C_T^2 + C_A^2 \eqqcolon C_1.
\end{align*}
Next, we consider \cite[9.28c]{Besse},
\begin{align*}
g(R(\zeta_j,\xi_{\beta})\xi_{\alpha}, \zeta_i) &= g((\nabla_{\xi_{\beta}}T)(\zeta_i, \zeta_j), \xi_{\alpha})) -g(T(\zeta_j, \xi_{\beta}),T(\zeta_i, \xi_{\alpha})) \\
& \ \ \ +  g((\nabla_{\zeta_i}A)(\xi_{\beta}, \xi_{\alpha}), \zeta_j)  + g(A(\xi_{\beta}, \zeta_j), A(\xi_{\alpha}, \zeta_i)).
\end{align*}
It follows that
\begin{align*}
\vert g((\nabla_{\xi_{\beta}}T)(\zeta_i, \zeta_j), \xi_{\alpha})) \vert  \leq  C_R + C_T^2 +2C_A^2 + C_1 \eqqcolon C_2.
\end{align*}
Applying this bound and the inequality \eqref{BoundedWXW} to  \eqref{Tderivative} we deduce
\begin{align*}
\Vert W^{-1} \xi_{\beta}\xi_{\alpha}(W) + \xi_{\beta}\xi_{\alpha}(W) W^{-1} \Vert \leq C_2 + 4C_T^2 \coloneqq C_3.
\end{align*}
Using the same strategy as in the proof of the inequality \eqref{BoundedWXW}, it follows that
\begin{align*}
\Vert  W^{-1} \xi_{\beta}\xi_{\alpha}(W) \Vert \leq C_3.
\end{align*}
Collecting everything so far, the claim follows from
\begin{align*}
\Vert \xi_{\beta}(\mathcal{W}_{\xi_\alpha}) \Vert_{\infty} \leq C_3 + 2C_T^2.
\end{align*}
\end{proof}

\begin{rem}\label{RemarkVertHol}
The condition for $\nabla^{\mathcal{V}}$ to be gauge equivalent to the trivial connection is that the holonomy $\Hol(\mathcal{V}, \nabla^{\mathcal{V}})$ is trivial, see for instance \cite[Section 4.3]{Baum}. 
\end{rem}

In the following  example we show that $\nabla^{\mathcal{V}}$ can be the trivial connection although the $T$-tensor is nontrivial.

\begin{ex}\label{ExampleWithoutHolonomy}
Let $M= B \times \bbT^k$ be the trivial $\bbT^k$-bundle over a Riemannian manifold $(B,h)$. In this situation the vertical distribution $\mathcal{V}$ is the trivial vector bundle $B \times \bbR^k$. For any $i \in \bbN$ we endow $M$ with the Riemannian product metric
\begin{align*}
g_i\coloneqq h \oplus \frac{1}{i^2} u^2 \hat{g},
\end{align*}
where $u: B \rightarrow \bbR$ is a fixed smooth function and $\hat{g}$ is the standard flat metric on $\bbT^k$. Then $(M,g_i)_{i \in \bbN}$ is a collapsing sequence with bounded sectional curvature and diameter. Consider the Riemannian submersions $f_i: (M,g_i) \rightarrow (B,h)$. The fibers are embedded flat tori and the horizontal distribution is integrable for all $i \in \bbN$. The $T$-tensor is, for all $i \in \bbN$, given by
\begin{align*}
T_i(U,V) = \frac{\grad(u)}{u} g_i(U,V)
\end{align*}
for any two vertical vectors $U$, $V$. We claim that the induced connection $\nabla^{\mathcal{V}}$ is trivial with respect to an isometric trivialization. To see this claim, we adapt the notation of Lemma \ref{BoundConnV}. Let $e^i_1, \ldots, e^i_k$ be an orthonormal frame for $\hat{g}_i$. Any such choice induces a global vertical frame on $(M,g_i)$. For $W_i$ defined as in \eqref{DefWvertmetric} we obtain
\begin{align*}
W_i &= \frac{u}{i} \id.
\end{align*}
Hence,
\begin{align*}
(\mathcal{W}_i)_{X} &= \frac{1}{2}( W_i^{-1} X(W_i) - X(W_i) W_i^{-1} ) \\
&= \frac{1}{2} \left( \frac{i}{u} \frac{X(u)}{i} - \frac{X(u)}{i}\frac{i}{u} \right) \id \\
&= 0.
\end{align*}
Therefore, $\mathcal{W}_i = 0$ for all $i \in \bbN$, although the $T$-tensor is nontrivial.
\end{ex}

Next, we state an example of a collapsing sequence such that the corresponding connections $\nabla^{\mathcal{V}_i}$ do not converge to a connection that is gauge equivalent to the trivial connection.

\begin{ex}\label{ExampleWithHolonomy}
Consider the two-dimensional torus $\bbT^2$ and choose an element of $\Aut(\bbT^2) \cong \GL(2,\bbZ)$, e.g.\
\begin{align*}
H \coloneqq \begin{pmatrix}
2 & 1 \\
1 & 1
\end{pmatrix}.
\end{align*}
Let $C \coloneqq [0,1] \times \bbT^2$ be the cylinder over $\bbT^2$ and set
\begin{align*}
M \coloneqq \faktor{C}{\sim},
\end{align*}
where we identify $(0,x)$ with $(1, Hx)$ for all $x \in \bbT^2$. This defines a nontrivial $\bbT^2$-bundle $f:M \rightarrow S^1$. There is a Riemannian metric $g = h \oplus \hat{g}$ on $M$ such that $h$ is the standard metric on $S^1$ and $\left( \hat{g}_{\phi} \right)_{\phi \in S^1}$ is a family of flat metrics on $\bbT^2$. Then $(M,g_i)_{i \in \bbN}$ with $g_i \coloneqq h \oplus \frac{1}{i^2} \hat{g}$ defines a collapsing sequence with bounded sectional curvature and diameter such that the induced connection $\nabla^{\mathcal{V}_i}$ is never gauge equivalent to the trivial connection.
\end{ex}

\subsubsection{Uniform bounds for $\mathcal{Z}$}

Next we consider $\mathcal{Z} = \nabla^{Z}- \nabla^{\aff}$. Recall that $\nabla^Z$ restricted to a fiber $Z_p$ is the Levi-Civita connection of $(Z_p, ,\hat{g}_p)$, where $\hat{g}_p$ is the induced affine parallel metric. We also recall that $\mathcal{Z} = 0$ if and only if the induced metric $\hat{g}_p$ is flat for all $p \in B$. 

\begin{lem}\label{BoundFiberCon}
Let $f:(M,g) \rightarrow (B,h)$ be a Riemannian affine fiber bundle such that
\begin{align*}
\Vert A \Vert_{\infty} \leq C_A, \quad 
\Vert T \Vert_{\infty} \leq C_T, \quad 
\Vert R^M \Vert_{\infty} \leq C_R,
\end{align*}
then 
\begin{align*}
\Vert \mathcal{Z} \Vert &\leq C(k,C_T,C_R) ,\\
\Vert X(\mathcal{Z} ) \Vert &\leq C(k, C_t, C_A, C_R) \Vert X \Vert ,
\end{align*}
for all basic vector fields $X$.
\end{lem}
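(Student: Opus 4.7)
The plan is to express $\mathcal{Z}$ via the Koszul formula on a single fiber, reducing the $C^0$-bound to a bound on Lie brackets of affine parallel vector fields in the fiber metric; these Lie brackets are in turn controlled by the intrinsic sectional curvature, which the Gauss equation bounds in terms of $C_R$ and $C_T$. For the derivative bound, I would differentiate along a basic direction and absorb the new terms into bounds on $\nabla^{\mathcal V}$ (Lemma \ref{BoundConnV}) and $\nabla T$.

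First I would compute $\mathcal{Z}$ on affine parallel vertical fields $e_a, e_b, e_c$ tangent to a single fiber $(Z_p, \hat g_p)$. Since $\nabla^{\aff}_{e_a}e_b = 0$ and since $\hat g(e_b,e_c)$ is affine parallel (hence constant along the fiber), the Koszul formula collapses to
\[
2\hat g\bigl(\mathcal{Z}(e_a)(e_b),\,e_c\bigr)
= \hat g([e_a,e_b],e_c) - \hat g([e_b,e_c],e_a) + \hat g([e_a,e_c],e_b).
\]
Thus $\|\mathcal{Z}\|$ is controlled by the norms of the brackets $[e_a,e_b]$ measured in $\hat g_p$ with respect to an affine parallel orthonormal frame. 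To bound these, I would use the Gauss equation (O'Neill's formula, \cite[9.28c]{Besse} restricted to vertical arguments) to control the intrinsic sectional curvature of the fiber in terms of the ambient one and the $T$-tensor, namely
\[
|\sec^{Z_p}| \leq C_R + C(k)\, C_T^{2}.
\]
Since the universal cover of $Z_p$ is a nilpotent Lie group $N$ with a left-invariant metric, a Milnor-type computation (expressing the sectional curvatures as polynomial combinations of the structure constants in an orthonormal left-invariant frame, together with the absence of flat left-invariant directions on a nilpotent Lie group) yields $|[e_a,e_b]|_{\hat g} \leq C(k, C_R, C_T)$, and hence the first inequality.

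For the second inequality, I would fix a basic vector field $X$ and differentiate the displayed Koszul identity. Write the affine parallel frame as $e_a$ and note that $X$ applied to $\hat g([e_a,e_b],e_c)$ splits into two types of contributions: (i) terms in which $X$ hits the frame, which by the very definition of $\nabla^{\mathcal V}$ are expressed through the connection one-form $\mathcal W_X$ and are therefore bounded by $C(C_T)\|X\|$ via Lemma \ref{BoundConnV}; and (ii) terms in which $X$ hits the fiber metric coefficients, equivalently the basic derivatives of the fiber Christoffel symbols $\hat\Gamma^c_{ab}$. The second type of term is controlled by a bound on $(\nabla T)(\cdot,\cdot)$ in horizontal directions, which follows verbatim from the argument already carried out for the constant $C_2$ in the proof of Lemma \ref{BoundConnV}, using \cite[9.28c, 9.28d]{Besse} and the commutation identity for mixed derivatives of $W$. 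This yields the stated bound with a constant depending on $k$, $C_T$, $C_A$, $C_R$.

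The main obstacle is step two: passing from a bound on the intrinsic sectional curvature of the fiber to a bound on the structure constants of its nilpotent Lie algebra in an affine parallel orthonormal frame. This is known but nontrivial; on the universal cover one uses Milnor's explicit formulas and the fact that the affine holonomy of an infranilmanifold acts by Lie algebra automorphisms, so that lifting does not spoil the curvature bound. Once this step is in place, everything else is a direct consequence of the Koszul identity above and the already-proved bounds on $\nabla^{\mathcal V}$ and $\nabla T$.
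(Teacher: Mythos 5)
Your proposal is correct in outline, and for the $C^0$-estimate it is essentially the paper's argument: the paper likewise writes the fiber Christoffel symbols via the Koszul formula in terms of the structure constants $\tau^c_{ab}$ of an affine parallel orthonormal frame and bounds them by the intrinsic curvature of the fiber via O'Neill. The step you flag as the ``main obstacle'' is where the paper is both sharper and simpler than your sketch suggests: for a nilpotent metric Lie algebra the Killing-form sum $\sum_{a,b,c}\tau^c_{ab}\tau^b_{ac}$ vanishes and $\sum_{a,b,c}(\tau^c_{ab})^2=-4\scal(Z)$ (Milnor's formula, quoted from Lott), which gives the exact identity $\Vert\mathcal{Z}\Vert^2=-3\scal(Z)$, and then $\vert\scal(Z)\vert\le k^2(C_R+2C_T^2)$ by O'Neill; note that a pointwise bound on $\sec^{Z_p}$ does not by itself control individual brackets --- ``absence of flat left-invariant directions'' is not the mechanism --- so you should route your estimate through this scalar-curvature identity rather than through sectional curvatures (your lifting remark about $\Gamma\subset\Aff(N)$ is fine and is implicit in the paper). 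For the $C^1$-estimate your route genuinely differs from the paper's: you differentiate the Koszul identity in the affine-parallel gauge, using that $X$-derivatives of the frame and of the fiber metric are expressed through $\mathcal{W}_X$ and through $T$ (indeed $Xg(U,V)=-2g(X,T(U,V))$ for $X$-invariant vertical $U,V$, and the affine-frame structure constants are $X$-invariant by the Jacobi identity), whereas the paper computes $X(\Gamma^c_{ab})$ directly, commutes $\nabla_X\nabla_{\zeta_a}$ against $R^M(X,\zeta_a)$ and $\nabla_{[X,\zeta_a]}$, and then uses $\vert[X,\zeta_a]\vert\le 3C_T\vert X\vert$ together with Lemma \ref{BoundConnV}. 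Both mechanisms work; in fact your route, carried out carefully, needs only $C_T$, $C_R$ and the first estimate, so your appeal to the horizontal $\nabla T$ bound (the constant $C_2$ in the proof of Lemma \ref{BoundConnV}) is stronger than necessary, though that bound is indeed available there, while the paper's final constant also involves $C_A$.
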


\begin{proof}
Let $(\zeta_1, \ldots, \zeta_k)$ be as in the split orthonormal frame, see Definition \ref{SplitFrame}. We define the associated structural coefficients $\tau^c_{ab}$ via
\begin{align*}
[\zeta_a, \zeta_b ] = \sum_{c=1}^k \tau_{ab}^c \zeta_c.
\end{align*}
These are also the structural constants of the Lie algebra $\mathfrak{n}$ of the nilpotent Lie group $N$ that covers $Z$ with respect to the pullback of  $(\zeta_1, \ldots, \zeta_k)$.

A straightforward calculation using the Koszul formula shows that
\begin{align*}
\Gamma_{ab}^c = \hat{\Gamma}_{ab}^c = \frac{1}{2}\left( \tau_{ab}^b - \tau_{ac}^b - \tau_{bc}^a \right).
\end{align*}

Thus,
\begin{align*}
\Vert \mathcal{Z}\Vert^2 &= \sum_{a,b,c=1}^k (\Gamma_{ab}^c)^2 \\
&= \frac{3}{4} \sum_{a,b,c=1}^k (\tau_{ab}^c)^2+ 2  \sum_{a,b,c=1}^k (\tau_{ac}^b \tau_{bc}^a - \tau_{ab}^c \tau_{ac}^b - \tau_{ab}^c \tau_{bc}^a )\\
&= - 3 \scal(Z),
\end{align*}
because $\sum_{ia,b,c=1}^k \tau_{ab}^c \tau_{ac}^b = 0$ as $\mathfrak{n}$ is nilpotent and $\sum_{a,b,c=1}^k (\tau_{ab}^c)^2 = - 4 \scal(Z)$ by \cite[Lemma 1]{LottLaplaceSmooth}. Now the first inequality follows from O'Neill's formula \cite[9.29c]{Besse},
\begin{align*}
\vert \scal(Z) \vert &\leq \sum_{a,b=1} \vert \sec^Z(\zeta_a, \zeta_b)\vert \\
&= \sum_{a,b=1} \vert \sec^M(\zeta_a, \zeta_b) - \vert T(\zeta_a, \zeta_b )\vert^2 + g(T(\zeta_a, \zeta_a), T(\zeta_b, \zeta_b) )\vert \\
&\leq k^2 (C_R + 2 C_T^2).
\end{align*}

The second inequality is also proven in local coordinates,
\begin{align*}
\vert X(\mathcal{Z}) \vert^2 = \sum_{a,b,c=1}^k \left(X(\Gamma_{ab}^c) \right)^2
\end{align*}
for any basic vector field $X$. We calculate
\begin{align*}
\vert X(\Gamma_{ab}^c) \vert &= \vert g(\nabla_X \nabla_{\zeta_a} \zeta_b, \zeta_c) - g(\nabla_{\zeta_a} \zeta_b, \nabla_X \zeta_c) \vert \\
&= \vert g(R^M(X, \zeta_a)\zeta_b + \nabla_{[X, \zeta_a]}\zeta_b + \nabla_{\zeta_a} \nabla_X \zeta_b, \zeta_c)- g(\nabla_{\zeta_a} \zeta_b, \nabla_X \zeta_c)  \vert \\
&= \vert g(R^M(X, \zeta_a)\zeta_b + \nabla_{[X, \zeta_a]}\zeta_b, \zeta_c) - g(\nabla_X \zeta_b, \nabla_{\zeta_a} \zeta_c )- g(\nabla_{\zeta_a} \zeta_b, \nabla_X \zeta_c)  \vert .
\end{align*}
As  the Lie bracket $[X, \zeta_a]$ is vertical  for any basic vector field $X$ and $1 \leq a \leq k$, we use Lemma  \ref{BoundConnV} to conclude that
\begin{align*}
\vert g(\nabla_{[X, \zeta_a]}\zeta_b, \zeta_c) \vert &\leq \vert [X, \zeta_a] \vert \, \Vert \mathcal{Z} \Vert \\
&= \vert \nabla_X^{\mathcal{V}} \zeta_a + T(\zeta_a, X) \vert \Vert \mathcal{Z} \Vert \\
&\leq 3C_T \Vert \mathcal{Z} \Vert \vert X \vert,
\end{align*}
and
\begin{align*}
\vert g(\nabla_X \zeta_b, \nabla_{\zeta_a} \zeta_c ) \vert &\leq \vert g(\nabla_X^{\mathcal{V}} \zeta_b, \nabla^Z_{\zeta_a} \zeta_c )\vert + \vert g(A(X,\zeta_a), T(\zeta_a, \zeta_c ) ) \vert \\
& \leq (2 C_T \Vert \mathcal{Z} \Vert + C_A C_T ) \vert X \vert,
\end{align*}
hold for any basic vector field $X$. Combining these inequalities we conclude
\begin{align*}
\vert X(\Gamma_{ab}^c) \vert \leq \Big( C_R + 7C_T \Vert \mathcal{Z} \Vert  + 2C_A C_T \Big) \vert X \vert.
\end{align*}
\end{proof}

The following example shows that for Riemannian affine fiber bundles with non-flat fibers the one-form $\mathcal{Z}$ is nontrivial.

\begin{ex}\label{ExampleHeisenberg}
Let $M = \mfaktor{\Gamma}{N}$ be a nilmanifold, where $N$ is the $3$-dimensional Heisenberg group
\begin{align*}
N = \left \lbrace \begin{pmatrix}
1 & x & z \\
0 & 1 & y \\
0 & 0 & 1 
\end{pmatrix} : \ x,y,z \in \bbR \right\rbrace
\end{align*}
and 
\begin{align*}
\Gamma = \left \lbrace \begin{pmatrix}
1 & x & z \\
0 & 1 & y \\
0 & 0 & 1 
\end{pmatrix} : \ x,y,z \in \bbZ \right\rbrace.
\end{align*}
The Lie algebra $\mathfrak{n}$ of $N$ is given by
\begin{align*}
\mathfrak{n} = \left \lbrace \begin{pmatrix}
0 & x & z \\
0 & 0 & y \\
0 & 0 & 0 
\end{pmatrix} : \ x,y,z \in \bbR \right\rbrace.
\end{align*}
We fix the basis
\begin{align*}
X \coloneqq \begin{pmatrix}
0 & 1 & 0 \\
0 & 0 & 0 \\
0 & 0 & 0 
\end{pmatrix}, \
Y &\coloneqq \begin{pmatrix}
0 & 0 & 0 \\
0 & 0 & 1 \\
0 & 0 & 0 
\end{pmatrix}, \
Z \coloneqq \begin{pmatrix}
0 & 0 & 1 \\
0 & 0 & 0 \\
0 & 0 & 0 
\end{pmatrix}
\end{align*}
and let $X^{\ast}, Y^{\ast}, Z^{\ast}$ be the dual basis. For any $i \in \bbN$ we consider the affine parallel metric
\begin{align*}
g_i \coloneqq \frac{1}{i^2} X^{\ast}\cdot X^{\ast} + \frac{1}{i^2} Y^{\ast} \cdot Y^{\ast} + \frac{1}{i^4} Z^{\ast} \cdot Z^{\ast}.
\end{align*}
It is not hard to check that $(Z, g_i)_{i \in \bbN}$ defines a collapsing sequence with bounded curvature that converges to a point as $i$ goes to infinity. 

Since,
\begin{align*}
\left( \mfaktor{\Gamma}{N}, g_1 \right) & \rightarrow \left( \mfaktor{\Gamma}{N}, g_i \right), \\
\begin{pmatrix}
1 & x & z \\
0 & 1 & y \\
0 & 0 & 1 
\end{pmatrix}
&\mapsto
\begin{pmatrix}
1 & ix & i^2 z \\
0 &  1 & i y \\
0 & 0 & 1
\end{pmatrix}
\end{align*}
is an $i^4$-fold isometric covering the metric $g_i$ has the same nontrivial curvature for all $i\in \bbN$. In particular, $\mathcal{Z}_i$ does not vanish in the limit $i \rightarrow \infty$.
\end{ex}

\subsubsection{Uniform bounds for $\mathcal{A}$}

Finally we consider $\mathcal{A} \in \Omega^2(B, \mathcal{P})$. This two-form on a Riemannian affine fiber bundle $f:M \rightarrow B$ is characterized by the property $f^{\ast} \mathcal{A} = A_{\vert \mathcal{H} \times \mathcal{H}}$. In the following example we see that this tensor can be nonzero while $\mathcal{Z} = 0$ and $\nabla^{\mathcal{V}}$ is trivial.

\begin{ex}\label{ExampleNonVanishingA}
Let $f:(M,g) \rightarrow (B,h)$ be an $S^1$-principal bundle such that $f$ is a Riemannian submersion with totally geodesic fibers of length $2 \pi$. Suppose further that the curvature form $\mathcal{A}$ of the $S^1$-principal bundle is nontrivial. Note that for any $i$ the cyclic subgroup $\bbZ_i < S^1$ acts on $M$ by isometries. The sequence $(\faktor{M}{\bbZ_i}, g_i)_{i \in \bbN}$ converges with bounded sectional curvature and diameter to $(B,h)$. Here $g_i$ is the induced quotient metric. In this sequence the fibers are embedded flat manifolds and the holonomy of the vertical bundle is trivial for all~$i$. However, the $A$-tensor of $(\faktor{M}{\bbZ_i}, g_i) \rightarrow (B, h)$ is given by
\begin{align*}
A_i(X,Y) = -\frac{1}{2} \mathcal{A}(X,Y) V,
\end{align*}
where $X,Y$ are horizontal vector fields and $V$ is a vertical vector field of unit length. In particular $A_i$ does not vanish in the limit $i \rightarrow\infty$.
\end{ex}

The following lemma is a generalization of \cite[Lemma 2.7]{RoosDirac}.

\begin{lem}\label{BoundATensor}
Let $f: (M,g) \rightarrow (B,h)$ be a Riemannian affine fiber bundle such that
\begin{align*}
\Vert A \Vert_{\infty} \leq C_A, \quad
\Vert T \Vert_{\infty} \leq C_T,\quad
\Vert R^M \Vert_{\infty} \leq C_R,
\end{align*}
then
\begin{align*}
\Vert \mathcal{A} \Vert_{C^1(B)} \leq C(k,n,C_A,C_T,C_R).
\end{align*}
\end{lem}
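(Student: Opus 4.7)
The plan is to mirror the pattern of the two preceding lemmas: bound $\mathcal{A}$ and its base covariant derivative in a split orthonormal frame, trading horizontal derivatives of the $A$-tensor for curvature terms and quadratic expressions in $A$ and $T$ via O'Neill's formulas.

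The $C^0(B)$-bound is immediate. Horizontal lifts are isometries and $\mathcal{A}(X,Y) = A(\tilde X, \tilde Y)$, so $\Vert \mathcal{A}\Vert_\infty \leq \Vert A\Vert_\infty \leq C_A$. For the $C^1$-estimate I would fix $x \in M$, set $p = f(x)$, and choose a split orthonormal frame $(\xi_1,\ldots,\xi_n,\zeta_1,\ldots,\zeta_k)$ as in Definition \ref{SplitFrame}, with $(\check\xi_1,\ldots,\check\xi_n)$ parallel at $p$ and $(\zeta_a)$ affine parallel along the fiber. Using $\nabla^M_{\xi_\gamma}\xi_\alpha\vert_x = A(\xi_\gamma,\xi_\alpha)$ (the horizontal part vanishes at $x$ because the base frame is parallel at $p$) together with the fact that $A$ vanishes as soon as its first argument is purely vertical, at the point $x$ I get
\[
\bigl\langle (\nabla^B_{\check\xi_\gamma}\mathcal{A})(\check\xi_\alpha,\check\xi_\beta), \zeta_a\bigr\rangle
= g\bigl((\nabla^M_{\xi_\gamma} A)(\xi_\alpha, \xi_\beta),\, \zeta_a\bigr)
- g\bigl(A(\xi_\alpha,\xi_\beta),\, \nabla^{\mathcal{V}}_{\xi_\gamma}\zeta_a\bigr).
\]
The second term on the right is controlled by $C_A \cdot \Vert \mathcal{W}\Vert_\infty \leq 2 C_A C_T$ by Lemma \ref{BoundConnV}, so the problem reduces to bounding the first term pointwise in $C_A, C_T, C_R$ and the dimensions.

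The heart of the argument is thus a bound on the horizontal derivative $(\nabla_{\xi_\gamma} A)(\xi_\alpha, \xi_\beta)$. I would extract such a bound from O'Neill's identities in \cite[9.28]{Besse}, in direct analogy with the way Lemma \ref{BoundConnV} handled the \emph{vertical} derivative of $A$: the formula expressing $g(R^M(\xi_\gamma, \xi_\alpha)\xi_\beta, \zeta_a)$ as a combination of $(\nabla_{\xi_\gamma}A)(\xi_\alpha, \xi_\beta)$ and $(\nabla_{\xi_\alpha}A)(\xi_\gamma, \xi_\beta)$ modulo quadratic terms in $A$ and $T$, together with the antisymmetry $A(X,Y) = -A(Y,X)$ and the Bianchi-type identity obtained by cyclic summation over the three horizontal slots, forms a linear system from which the individual component $g((\nabla_{\xi_\gamma} A)(\xi_\alpha, \xi_\beta), \zeta_a)$ can be solved. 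Each component is then bounded by a constant $C(C_A, C_T, C_R)$, and summing over the $n^2 k$ frame indices yields the pointwise bound on $\nabla^B \mathcal{A}$.

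The main obstacle is precisely this algebraic disentangling step. The vertical case in Lemma \ref{BoundConnV} was convenient because Besse's 9.28c and 9.28d produced an individual vertical-derivative component of $A$ almost directly; the horizontal case is less forgiving, and one has to combine several O'Neill-type identities with the symmetries of $A$ in order to isolate a single component. Once this is done, the $C^0$-bound together with the resulting pointwise bound on $\nabla^B\mathcal{A}$ gives the desired $C^1(B)$-estimate with constant depending only on $k,n,C_A,C_T,C_R$.
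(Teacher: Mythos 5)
Your proposal is correct and follows essentially the same route as the paper: pass to a split orthonormal frame with the base frame parallel at $p$, reduce the $C^1$-estimate to the horizontal components $g\big((\nabla_{\xi_{\alpha}}A)(\xi_{\beta},\xi_{\gamma}),\zeta_a\big)$, and control these via O'Neill's curvature identities by terms of the form $C_R + C\,C_AC_T$ (your extra $2C_AC_T$ from $\nabla^{\mathcal{V}}_{\xi_{\gamma}}\zeta_a$ is a harmless refinement). The ``algebraic disentangling'' you flag as the main obstacle is actually unnecessary: the identity \cite[9.32]{Besse} for the curvature with three horizontal and one vertical argument already isolates the single component $g\big((\nabla_{Z}A)(X,Y),V\big)$ up to terms quadratic in $A$ and $T$, and this is exactly what the paper applies directly to obtain the bound $C_R + 3C_AC_T$ per component.
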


\begin{proof}
We use a split orthonormal frame, see Definition \ref{SplitFrame}, and write
\begin{align*}
(f^{\ast} \mathcal{A})(X,Y) = A_{\vert \mathcal{H} \times \mathcal{H}}(X,Y) &= \sum_{a=1}^k g(A(X,Y), \zeta_a) \zeta_a\\
& \eqqcolon \sum_{a=1}^k A^a(X,Y)  \zeta_a \eqqcolon \sum_{a=1}^k (f^{\ast} \mathcal{A}^a)(X,Y) \zeta_a \rlap{ .}
\end{align*}
Then, 
\begin{align*}
\Vert \mathcal{A} \Vert_{C^1(B)} &\leq \sum_{a=1}^k \left( \Vert \mathcal{A}^a \Vert_{\infty} + \Vert \nabla(\mathcal{A}^a) \Vert_{\infty} \right) \\
&\leq k C_A +  \sum_{a=1}^k \Vert \nabla(\mathcal{A}^a) \Vert_{\infty}.
\end{align*}
We calculate the second term pointwise. Let $p \in B$ be arbitrary and $x \in f^{-1}(p)$. Suppose further that $(\xi_1, \ldots, \xi_k)$ is the horizontal lift of a local orthonormal frame $\left( \check{\xi}_1, \ldots, \check{\xi}_n \right)$ that is parallel in $p \in B$. Using \cite[9.32]{Besse} we obtain
\begin{align*}
\vert \nabla (\mathcal{A}^a) \vert^2 &=  \frac{1}{2} \sum_{\alpha, \beta, \gamma = 1}^n \vert (\nabla_{\check{\xi}_{\alpha}} \mathcal{A}^a) (\check{\xi}_{\beta}, \check{\xi}_{\gamma}) \vert^2 \\
&= \frac{1}{2} \sum_{\alpha, \beta, \gamma = 1}^n \vert g \big( (\nabla_{\xi_{\alpha}} A) (\xi_{\beta}, \xi_{\gamma}), \zeta_a \big) \vert^2 \\
&= \frac{1}{2} \sum_{\alpha, \beta, \gamma = 1}^n \Big\vert  g \big( R^{M}( \xi_{\beta}, \xi_{\gamma} )\xi_{\alpha}, \zeta_a \big) -  g \big( A( \xi_{\beta}, \xi_{\gamma}), T(\zeta_a, \xi_{\gamma}) \big) \\
& \qquad \qquad \quad +  g \big( A(\xi_{\gamma}, \xi_{\alpha} ), T(\zeta_a, \xi_{\beta} ) \big) +  g \big( A(\xi_{\alpha}, \xi_{\beta} ) ,T(\zeta_a, \xi_{\gamma}) \big) \Big\vert^2 \\
& \leq \frac{1}{2}  \sum_{\alpha, \beta, \gamma = 1}^n (C_R + 3C_AC_T)^2.
\end{align*}
\end{proof}

\section{Spin-like structures on Riemannian affine fiber bundles}\label{SectionAffineFiberBundles}

In this section we study Riemannian affine fiber bundles $f:(M,g) \rightarrow (B,h)$,  where $(M,g)$ is a spin manifold with a fixed spin structure. First, we discuss whether the spin structure on $M$ induces a structure on $B$ or on the fibers.  As  $f$ is a Riemannian submersion we discuss afterwards how Clifford multiplication of vertical and horizontal vectors acts on the spinors of $M$. In particular, we derive a formula for the Dirac operator on $M$ expressing the influence of the ``vertcial" and ``horizontal" geometry. Next we discuss how the affine connection $\nabla^{\aff}$ on $M$ lifts to a connection on the spinor bundle $\Sigma M$.  This leads to the notion of affine parallel spinors. The main result of the last subsection is that the space of $L^2$-affine parallel spinors is isometric to an explicit given Clifford bundle over the base manifold $B$.

\subsection{Induced structures}\label{SectionInducedStructures}

Let $f:(M,g) \rightarrow (B,h)$ be a Riemannian affine fiber bundle with infranil fiber $Z$. In the remainder of this section we assume that $M$ is a spin manifold with a fixed spin structure. Since for any $p \in B$, the fiber $Z_p = f^{-1}(p)$ is an embedded oriented submanifold with trivial normal bundle there is an induced spin structure on $Z_p$. Moreover, each path in $B$ connecting two points $p,q \in B$ induces an isomorphism between the induced spin structure on $Z_p$ and the induced spin structure on $Z_q$. 

However, there is in general no induced spin structure on $B$, as can be seen in Example \ref{ExampleCP}. There are even examples of Riemannian affine fiber bundles $M \rightarrow B$ where $M$ is spin and $B$ is nonorientable. 

\begin{ex}[{\cite[Example on p.\ 184 ]{LottDirac}}]
Let $M \coloneqq \mathrm{U}(1) \times_{\bbZ_2} S^2$, where $\bbZ_2$ acts on $\mathrm{U}(1)$ via complex conjugation and on $S^2$ via the antipodal map. Then $M$ is spin and $f: M \rightarrow \bbR \mathrm{P}^2$ is a nontrivial $S^1$-bundle.
\end{ex}

Therefore, we also have to deal with pin${}^{\pm}$ structures. Loosely speaking, pin${}^{\pm}$ structures are a generalization of spin structures to a nonorientable setting. In the following, we briefly sketch the basic definitions and properties of $\pin^{\pm}$-structures. For further details, we refer to \cite{KirbyTaylor}, \cite[Appendix A]{GilkeyPin}.

The double cover $\Spin(n) \rightarrow \SO(n)$ can be extended to a double cover of $\mathrm{O}(n)$ in two inequivalent ways, called $\rho^+: \Pin^+(n) \rightarrow \mathrm{O}(n)$ and $\rho^-: \Pin^{-}(n) \rightarrow \mathrm{O}(n)$. As topological spaces $\Pin^+(n)$ and $\Pin^-(n)$ are both isomorphic to $\Spin(n) \sqcup \Spin(n)$ but the group structure of $\Pin^+(n)$ and $\Pin^-(n)$ is different. To see this, we consider the subgroup $\lbrace \id, r \rbrace \subset \mathrm{O}(n)$, where $r$ is a reflection along a hyperplane. Then 
\begin{align*}
\rho^+( \lbrace \id, r \rbrace) &\cong \faktor{\bbZ}{2 \bbZ} \oplus \faktor{\bbZ}{2 \bbZ}, \\
\rho^- ( \lbrace \id, r \rbrace) &\cong \faktor{\bbZ}{4 \bbZ}.
\end{align*}

The notion of pin${}^{\pm}$ structures is an extension of the definition of spin structures to the double covers $\Pin^{\pm}(n) \rightarrow \mathrm{O}(n)$.

\begin{defi}
A pin${}^{\pm}$ structure on an $n$-dimensional Riemannian manifold $(M,g)$ is a Pin${}^{\pm}$-principal bundle $P_{\Pin^{\pm}}B$ that is a double cover of the orthonormal frame bundle $P_{\mathrm{O}}B$, compatible with the double cover $\Pin^{\pm}(n) \rightarrow \mathrm{O}(n)$.
\end{defi}

\begin{ex}
\begin{align*}
\text{The real projective space} \  \bbR\mathrm{P}^n \ \text{is}
\begin{cases}
\pin^+, & \text{if} \ n = 4k,\\
\pin^-, & \text{if} \ n = 4k + 2,\\
\spin, & \text{if} \ n = 4k + 3.
\end{cases}
\end{align*}
\end{ex} 
 
Similar to spin structures, the existence of a pin${}^{\pm}$ structure is a topological property characterized by the vanishing of specific Stiefel-Whitney classes. The proof of the following theorem can be found in \cite[Lemma 1.3]{KirbyTaylor} .
 
\begin{thm}\label{StiefelWhitneyClasses}
A manifold $M$ admits a pin${}^+$ structure if the second Stiefel-Whitney class $w_2(M)$ vanishes and a pin${}^-$ structure if the Stiefel-Whitney classes satisfy the equation $w_2(M) + w_1(M)^2 = 0$. The topological obstruction for a spin structure is the vanishing of $w_2(M)$ and $w_1(M)$.
\end{thm}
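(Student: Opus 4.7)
The plan is to interpret the existence of $\pin^{\pm}$- and spin structures via obstruction theory applied to the central $\bbZ/2$-extensions
\[
1 \longrightarrow \bbZ/2 \longrightarrow \Pin^{\pm}(n) \longrightarrow \rO(n) \longrightarrow 1, \quad 1 \longrightarrow \bbZ/2 \longrightarrow \Spin(n) \longrightarrow \SO(n) \longrightarrow 1,
\]
which give rise to principal fibrations $B\Pin^{\pm}(n)\to B\rO(n)$ and $B\Spin(n)\to B\SO(n)$ with Eilenberg-MacLane fiber $K(\bbZ/2,1)$. By definition, a $\pin^{\pm}$-structure on $M$ is a lift of the classifying map $\phi\colon M\to B\rO(n)$ of the orthonormal frame bundle through $B\Pin^{\pm}(n)\to B\rO(n)$, and a spin structure is a lift through $B\Spin(n)\to B\SO(n)$ after first reducing $P_{\rO}M$ to $P_{\SO}M$.

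Since the fiber is a $K(\bbZ/2,1)$, standard obstruction theory produces a single primary obstruction $o^{\pm}\in H^2(M;\bbZ/2)$ to such a lift, given as the pullback $\phi^{\ast}u^{\pm}$ of a universal class $u^{\pm}\in H^2(B\rO(n);\bbZ/2)\cong \bbZ/2\langle w_1^2\rangle\oplus \bbZ/2\langle w_2\rangle$, namely the $k$-invariant of the central extension above. The heart of the argument is therefore to compute $u^{\pm}$, which I would carry out by testing on two subgroups. Restricting to $\SO(n)\hookrightarrow \rO(n)$ kills $w_1$ and reduces either $\Pin^{\pm}$-extension to the classical spin extension, whose $k$-invariant on $B\SO(n)$ is well known to equal $w_2$; this forces $u^{\pm}=w_2+\epsilon^{\pm}w_1^2$ for some $\epsilon^{\pm}\in\{0,1\}$.

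The remaining unknown $\epsilon^{\pm}$ is pinned down by restricting instead to the $\bbZ/2$-subgroup $\{1,r\}\subset \rO(n)$ generated by a single reflection $r$. By the very definition of the two pin groups, a lift $\tilde r$ satisfies $\tilde r^2=+1$ in $\Pin^+(n)$ and $\tilde r^2=-1$ in $\Pin^-(n)$, so the restricted extensions are, respectively, the split extension $\bbZ/2\oplus \bbZ/2\to \bbZ/2$ and the nontrivial $\bbZ/4\to \bbZ/2$. The first has vanishing $k$-invariant, forcing $\epsilon^+=0$; the second has $k$-invariant equal to the generator of $H^2(B\bbZ/2;\bbZ/2)$, which is precisely the restriction of $w_1^2$ to this subgroup, forcing $\epsilon^-=1$. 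The spin statement then drops out by the same $\SO$-side computation once orientability $w_1(M)=0$ is used to perform the initial reduction of the frame bundle.

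The main obstacle is this identification of the universal $k$-invariant; everything else, including the reduction of the existence question to obstruction theory and the verification that the spin extension has $k$-invariant $w_2$, is formal once the fiber is recognized as $K(\bbZ/2,1)$. An alternative but essentially equivalent route would be to write the twisting cocycle of local lifts of the transition functions of $P_{\rO}M$ in \v{C}ech cohomology and check directly that the obstructing triple coboundary represents $w_2+\epsilon^{\pm}w_1^2$, which amounts to the same reflection-subgroup computation phrased coordinate-wise.
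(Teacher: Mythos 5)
The paper does not give its own proof of this statement; it is cited to \cite[Lemma 1.3]{KirbyTaylor} and no argument is reproduced. Your obstruction-theoretic proof is correct and is, in substance, the standard argument behind that reference: you recognize the fibration $B\Pin^{\pm}(n)\to B\rO(n)$ (resp.\ $B\Spin(n)\to B\SO(n)$) as a $K(\bbZ/2,1)$-fibration, so the sole obstruction to a lift is the pullback of a universal class in $H^2(B\rO(n);\bbZ/2)=\bbZ/2\langle w_1^2\rangle\oplus\bbZ/2\langle w_2\rangle$, and you determine that class by restricting to $\SO(n)$ (pinning the $w_2$-coefficient to $1$) and to a reflection subgroup $\{1,r\}$ (pinning the $w_1^2$-coefficient to $0$ for $\Pin^+$, where the preimage $\bbZ/2\oplus\bbZ/2$ splits, and to $1$ for $\Pin^-$, where the preimage $\bbZ/4$ does not). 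Both restriction computations are right: under $\{1,r\}\hookrightarrow\rO(n)$ the tautological bundle restricts to $\bbR^{n-1}\oplus\bbR_{\mathrm{sgn}}$, so $w_1\mapsto t$, $w_2\mapsto 0$, and $w_1^2\mapsto t^2$ is the generator of $H^2(B\bbZ/2;\bbZ/2)$, matching the nontrivial $k$-invariant of $\bbZ/4\to\bbZ/2$. The spin case follows by first using $w_1(M)=0$ to reduce to $\SO(n)$ and then applying the same obstruction with universal class $w_2$. The argument is complete; the only implicit points worth making explicit in a write-up are that $w_2$ restricts to $0$ on $B\{1,r\}$ (so the reflection test isolates exactly the $w_1^2$-coefficient) and that the identification of the universal class is independent of $n\geq 2$ because the restriction maps used are surjective in degree $2$.
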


The $i$-th Stiefel-Whitney class $w_i$ of a vector bundle $E \oplus F$ satisfies
\begin{align*}
w_i(E \oplus F) = \sum_{k=1}^i w_i(E) \cup w_{i-k}(F),
\end{align*}
where $\cup$ is the cup product. Together with the above characterization of the various structures, we conclude the following lemma, see \cite[Lemma A.1.5]{GilkeyPin}.

\begin{lem}\label{InducedStructures}
Let
\begin{align*}
0 \rightarrow V_1 \rightarrow V_2 \rightarrow V_3 \rightarrow 0
\end{align*}
be a short exact sequence of real vector bundles over a manifold $M$. For any permutation $\lbrace i,j,k \rbrace$ of $\lbrace 1, 2,3 \rbrace$, we have
\begin{enumerate}
	\item if $V_i$ and $V_j$ are spin, there is an induced spin structure on $V_k$,
	\item if $V_i$ is spin and $V_j$ is pin${}^{\pm}$, there is an induced pin${}^{\mp}$ structure on $V_k$,
	\item if $V_i$ is pin${}^{\pm}$ and $V_j$ is pin${}^{\mp}$ and $V_k$ is orientable, then there is an induced spin structure on $V_k$.
\end{enumerate}
\end{lem}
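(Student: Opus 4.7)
The plan is to reduce the claim to the direct-sum case $V_2 \cong V_1 \oplus V_3$ and then construct the missing principal bundle via a fiber-product using compatible group homomorphisms between the relevant pin/spin double covers. First, equip $V_2$ with a Euclidean fiber metric (which exists by a partition-of-unity argument). The short exact sequence then splits orthogonally as $V_2 \cong V_1 \oplus V_3$, and the $\rO(p+q)$-frame bundle $P_{\rO}(V_2)$ is obtained by extending the structure group of $P_{\rO}(V_1) \times_M P_{\rO}(V_3)$ along the inclusion $\rO(p) \times \rO(q) \hookrightarrow \rO(p+q)$, where $p \coloneqq \rank V_1$ and $q \coloneqq \rank V_3$. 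The output of the construction below will be natural under change of splitting because any two orthogonal splittings are homotopic through orthogonal splittings.

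Second, record the relevant compatibility homomorphisms. From the graded tensor embedding $\mathrm{Cl}(\bbR^p) \mathbin{\hat\otimes} \mathrm{Cl}(\bbR^q) \hookrightarrow \mathrm{Cl}(\bbR^{p+q})$ one extracts the group homomorphisms
\begin{align*}
\Spin(p) \times_{\bbZ_2} \Spin(q) &\hookrightarrow \Spin(p+q), \\
\Pin^{\pm}(p) \times_{\bbZ_2} \Spin(q) &\hookrightarrow \Pin^{\mp}(p+q), \\
\Pin^{\pm}(p) \times_{\bbZ_2} \Pin^{\mp}(q) &\hookrightarrow \Spin(p+q),
\end{align*}
each sitting over $\rO(p) \times \rO(q) \hookrightarrow \rO(p+q)$ with common kernel the diagonal $\bbZ_2 = \{\pm(1,1)\}$. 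The sign pattern comes from computing $(v_1 v_3)^2$ in the Clifford algebra; in the third line the image lands in $\Spin$ because $v_1 v_3$ has even Clifford degree, which matches the orientability hypothesis of case (3).

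Third, apply a case-by-case fiber-product construction, writing $\tilde G_\ell$ for the pin/spin group appropriate to $V_\ell$ and $\tilde P_\ell$ for the given lift. If $V_k = V_2$, form $\tilde P_i \times_M \tilde P_j$, a $\tilde G_i \times \tilde G_j$-principal bundle covering $P_{\rO}(V_i) \times_M P_{\rO}(V_j)$; quotient by the diagonal $\bbZ_2$ and extend the structure group along the appropriate homomorphism from Step two to obtain the lift $\tilde P_k \to P_{\rO}(V_k)$. If instead $V_k$ is a summand $V_i$, first pull $\tilde P_2$ back to the split reduction $P_{\rO}(V_i) \times_M P_{\rO}(V_j)$, obtaining a $\tilde G_i \times_{\bbZ_2} \tilde G_j$-principal bundle $\tilde Q$, and then form the fiber product $\tilde Q \times_{P_{\rO}(V_j)} \tilde P_j$; a direct unwinding of the diagonal identification shows that this is a $\tilde G_i \times \tilde G_j$-principal bundle whose first projection provides the desired $\tilde P_i \to P_{\rO}(V_i)$.

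The main obstacle will be the sign bookkeeping in Step two, i.e., verifying that the three embeddings above really do match the sign pattern asserted in the lemma under Gilkey's convention. This is a finite Clifford-algebra verification rather than a conceptual difficulty, and once done the fiber-product constructions in Step three are functorial and automatic. Independence of the chosen metric follows from the homotopy-invariance of pullback and extension of structure group under the path of orthogonal splittings.
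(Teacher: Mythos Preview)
Your approach is correct in outline but takes a genuinely different route from the paper. The paper does not actually prove the lemma in detail: it observes that the Whitney sum formula
\[
w_i(E \oplus F) = \sum_{\ell=0}^i w_\ell(E) \cup w_{i-\ell}(F)
\]
combined with the Stiefel--Whitney characterizations of spin/pin${}^{\pm}$ in Theorem~\ref{StiefelWhitneyClasses} shows that the relevant obstruction class on $V_k$ vanishes, and then refers to \cite[Lemma A.1.5]{GilkeyPin} for the statement. In other words, the paper's argument is purely obstruction-theoretic: it checks that \emph{some} structure exists on $V_k$, and leaves the actual construction of the \emph{induced} structure to the reference.

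You instead give a direct principal-bundle construction: split $V_2 \cong V_1 \oplus V_3$, record the three compatibility homomorphisms between the relevant double covers coming from the graded Clifford embedding, and then build $\tilde P_k$ as a fiber product (either by extending structure group from the summands, or by pulling back and cancelling against the known lift of the other summand). This buys you an explicit induced structure rather than mere existence, and it makes clear that the output is natural under change of splitting. The paper's route is quicker on the page but less informative; yours is the argument one would actually want in order to use the word ``induced'' honestly. The one place to be careful, as you yourself flag, is the sign bookkeeping in your second displayed embedding: whether $\Pin^{\pm}(p) \times_{\bbZ_2} \Spin(q)$ lands in $\Pin^{\mp}(p+q)$ or $\Pin^{\pm}(p+q)$ depends on the convention for which Clifford algebra houses $\Pin^{\pm}$, and you should verify this against Gilkey's convention rather than asserting it.
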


Let $f: (M,g) \rightarrow (B,h)$ be a Riemannian affine fiber bundle and assume that $M$ is spin. We have the following short exact sequence
\begin{align*}
0 \rightarrow f^{\ast} TB \rightarrow TM \rightarrow \mathcal{V} \rightarrow 0.
\end{align*}
Here $\mathcal{V} = \ker(df)$ is the vertical distribution. Applying Lemma \ref{InducedStructures} we conclude that $f^{\ast} TB$ is spin if and only if $\mathcal{V}$ is spin and that $f^{\ast} TB$ is pin${}^{\pm}$ if and only if $\mathcal{V}$ is pin${}^{\mp}$. But a spin or pin${}^{\pm}$ structure on $f^{\ast} TB$ does not induce a corresponding structure on $B$ itself as can be seen in the following example.
\begin{ex}[{\cite[Example on p.\ 184 ]{LottDirac}}]\label{ExampleCP}
Consider $S^5 \rightarrow \bbC P^2$. Then $f^{\ast} w_2(\bbC P^2) \in H^2(S^5, \bbZ_2)$. But $H^2(S^5, \bbZ_2)$ is trivial. Hence, $f^{\ast}w_2(\bbC P^2) = 0$ although $\bbC P^2$ is not spin.
\end{ex}
Nevertheless, if $B$ is spin or pin${}^{\pm}$ then the structure can be pulled back to $f^{\ast} TB$. Therefore, if $B$ is pin${}^{\pm}$ and $\mathcal{V}$ is pin${}^{\mp}$ then there is an induced spin structure on $M$ by Lemma \ref{InducedStructures}.

\subsection{Spin structures and Riemannian submersions}\label{SectionSpinSubmersion}

We start with an elementary discussion of the canonical complex spin representation (see \cite[Chapter I, \S 5]{LawsonMichelsohn} for more details). First, we recall that the group $\Spin(n)$ is contained in the Clifford algebra $\bbC l(n)\coloneqq Cl(\bbC^n)$ of $\bbC^n$.

If $n$ is even, then there is a unique irreducible representation $\chi_n: \bbC l(n) \rightarrow \GL(\Sigma_n)$ and if $n$ is odd, there are two inequivalent irreducible representations $\chi_n^{\pm}: \bbC l (n) \rightarrow \GL(\Sigma_n)$. Here $\Sigma_n$ is a complex vector space of complex dimension $2^{\left[ \frac{n}{2} \right]}$. The \textit{canonical complex spin representation} is defined as
\begin{align*}
\theta_n \coloneqq \begin{cases}
\left. \chi_n \right._{\vert \Spin(n)},  &\text{if $n$ is even,}\\
\left. \chi_n^+ \right._{\vert \Spin(n)}, &\text{if $n$ is odd}.
\end{cases}
\end{align*}
It is important to remark here, that the restrictions $\left. \chi_n^+ \right._{\vert \Spin(n)}$ and $\left. \chi_n^- \right._{\vert \Spin(n)}$ are equivalent to each other, although the non restricted representations $\chi_n^+$ and $\chi_n^-$ are inequivalent.

If $n$ is even the canonical complex spin representation splits $\Sigma_n = \widehat{\Sigma}^+_n \oplus \widehat{\Sigma}^-_n$ such that the restrictions $\left. \theta_n \right._{\vert \widehat{\Sigma}_n^{\pm}}$ are inequivalent irreducible representations of $\Spin(n)$. This splitting corresponds to the $\pm1$ eigenspaces of the \textit{complex volume element}
\begin{align*}
\omega_n^{\bbC} \coloneqq \mathrm{i}^{\left[ \frac{n+1}{2} \right]} \gamma(e_1) \cdots \gamma(e_n).
\end{align*}
Here $(e_1, \ldots, e_n)$ is the standard basis of $\bbR^n$ and $\gamma :\bbR^{n} \rightarrow \GL(\Sigma_{n})$ denotes Clifford multiplication, i.e.\ $\gamma(v)\gamma(w) + \gamma(w)\gamma(v) = -2 \langle v,w \rangle$, where $\langle . , .\rangle$ is the standard scalar product on $\bbR^n$. The map
\begin{align*}
 \Sigma_n = \widehat{\Sigma}^+_n \oplus \widehat{\Sigma}^-_n &\rightarrow \Sigma_n = \widehat{\Sigma}^+_n \oplus \widehat{\Sigma}^-_n, \\
\psi = \psi^+ + \psi^- &\mapsto \bar{\psi}=\psi^+ - \psi^-
\end{align*}
is called \textit{complex conjugation}.

If $n$ is odd the canonical complex spin representation $\theta_n$ is irreducible. The complex volume element $\omega_n^{\bbC}$ acts trivially on $\Sigma_n$. For later use, we want to define $\theta_n^-\coloneqq \left. \chi_n^- \right._{\vert \Spin(n)}$ for which the complex volume element $\omega_n^{\bbC}$ acts as $-1$. With respect to this representation, the Clifford multiplication of $x \in \bbR^n$ acts as $- \gamma(x)$. Recall from the discussion above that the irreducible representations $\theta_n$ and $\theta_n^-$ are equivalent to each other. 

To understand the interplay of the spin structures on a Riemannian submersion $f: M^{n+k} \rightarrow B^n$ we need to discuss how the canonical complex spin representation of $\Spin(n+k)$ behaves under $\Spin(n) \times \Spin(k)$. Since $\dim_{\bbC}(\Sigma_{m}) = 2^{\left[\frac{m}{2} \right]}$ it follows at once that
\begin{align*}
\dim_{\bbC}(\Sigma_{n+k}) = 
\begin{cases}
2 \dim_{\bbC}(\Sigma_n) \dim_{\bbC}(\Sigma_k), & \text{if $n$ and $k$ are odd,}\\
\dim_{\bbC}(\Sigma_n) \dim_{\bbC}(\Sigma_k), & \text{if $n$ or $k$ is even.}
\end{cases}
\end{align*}
Thus, if $n$ or $k$ is even, there is a vector space isomorphism
\begin{align}\label{IsoOneEven}
\Sigma_{n+k} \cong \Sigma_n \otimes \Sigma_k, 
\end{align}
Here $\Sigma_n \otimes \Sigma_k$ is to be understood as the tensor product of two complex vector spaces. We discuss the behavior of Clifford multiplication under this isomorphism later in this section.

Counting dimensions, it follows that such an isomorphism cannot exist if $n$ and $k$ are both odd. In that case, we proceed as follows:

Using the standard basis $(e_1, \ldots, e_{n+k})$ of $\bbR^{n+k}$ we consider the operator
\begin{align*}
\omega_n^{\bbC} \coloneqq \mathrm{i}^{\left[\frac{n+1}{2} \right]} \gamma(e_1) \cdots \gamma(e_n).
\end{align*}
A short computation shows that $(\omega_n^{\bbC})^2 = \id$. Hence, the action of $\omega_n^{\bbC}$ decomposes $\Sigma_{n+k}$ into the two eigenspaces $\Sigma_{n+k}^+$ and $\Sigma_{n+k}^-$ with respect to the eigenvalues $\pm 1$. 

\begin{rem}
If $n$ and $k$ are odd, the splitting $\Sigma_{n+k} = \Sigma_{n+k}^+ \oplus \Sigma_{n+k}^-$ defined above is \textit{different} from the canonical splitting $\Sigma_{n+k} = \widehat{\Sigma}^+_{n+k} \oplus \widehat{\Sigma}^-_{n+k}$ for even dimensions, as $\omega_n^{\bbC}$ and $\omega_{n+k}^{\bbC}$ are not simultaneously diagonalizable.
\end{rem}

Next we observe that the operator
\begin{align*}
\omega_k^{\bbC} \coloneqq \mathrm{i}^{\left[ \frac{k+1}{2} \right]} \gamma(e_{n+1}) \cdots \gamma(e_{n+k})
\end{align*}
anticommutes with $\omega_n^{\bbC}$. Moreover, $(\omega_k^{\bbC})^2 = \id$. Hence, the action of $\omega_k^{\bbC}$ defines an involution
\begin{align*}
\omega_k^{\bbC}: \Sigma_{n+k}^{\pm} \rightarrow \Sigma_{n+k}^{\mp}.
\end{align*}
In the following, we identify $\Sigma^-_{n+k}$ with the image $\omega_k^{\bbC}(\Sigma_{n+k}^+)$. Since
\begin{align*}
\dim_{\bbC}(\Sigma_{n+k}^{\pm}) = \frac{1}{2} \dim_{\bbC}(\Sigma_{n+k}) = \dim_{\bbC}(\Sigma_n) \dim_{\bbC}(\Sigma_k),
\end{align*}
there is an vector space isomorphism 
\begin{align*}
\Sigma_{n+k}^{\pm} \cong \Sigma_n^{\pm} \otimes \Sigma_k.
\end{align*}
Here, the notation $\Sigma_n^{\pm}$ symbolizes that $\omega_n^{\bbC}$ acts as $\pm 1$. Later we will see that, in fact, $\Sigma_n^{\pm}$ corresponds to the two irreducible spinor representations $\theta_n$, $\theta_n^-$. Summarizing the above discussion, we conclude
\begin{equation}\label{IsoOddOdd}
\begin{aligned}
\Sigma_{n+k} &= \Sigma_{n+k}^+ \oplus \Sigma_{n+k}^-, \qquad \qquad \quad \text{if $n$ and $k$ are odd.}\\
&\cong \left(\Sigma_n^+ \otimes \Sigma_k \right) \oplus \left( \Sigma_n^- \otimes \Sigma_k \right)\\
&\cong \left(\Sigma_n^+ \oplus \Sigma_n^- \right) \otimes \Sigma_k.
\end{aligned}
\end{equation}

Next we want to determine how Clifford multiplication with vectors in $\bbR^{n+k}$ ``separates'' into Clifford multiplications on $\Sigma_n$ and $\Sigma_k$. For all natural numbers $n$ and $k$ the Clifford algebra $\bbC l(n + k)$ is canonical isomorphic to the graded tensor product $\bbC l(n) \, \hat{\otimes} \, \bbC l(k)$, endowed with the multiplication
\begin{align*}
(a \, \hat{\otimes}\, \varphi ) \cdot (b \, \hat{\otimes} \, \psi) = (-1)^{\deg( \varphi) \deg(b)} ( a \cdot b ) \, \hat{\otimes} \, (\varphi \cdot \psi),
\end{align*}
see for instance \cite[Proposition 1.12]{SpinorialApproach}. 

If $n$ or $k$ is even, the multiplication of the graded tensor product $\bbC l(n) \, \hat{\otimes} \, \bbC l(k)$ carries over to $\Sigma_n \otimes \Sigma_k$. In the remaining case, $n$ and $k$ odd, we recall the eigenvalue decomposition $\Sigma_{n+k} = \Sigma_{n+k}^+ \oplus \Sigma_{n+k}^-$ together with the involution $\omega_k^{\bbC}: \Sigma_{n+k}^{\pm} \rightarrow \Sigma_{n+k}^{\mp}$. Since $\omega_k^{\bbC}$ anticommutes with $\omega_n^{\bbC}$ it follows that Clifford multiplication with vectors $v \in \Span \lbrace e_1, \ldots, e_n \rbrace$ acts as $\gamma_{n+k}(v)$ on $\Sigma_{n+k}^+$ and as $- \gamma_{n+k}(v)$ on $\Sigma_{n+k}^-$. On the other hand, Clifford multiplication with any vector in $\Span \lbrace e_{n+1}, \ldots, e_{n+k} \rbrace$ interchanges the eigenspaces $\Sigma_{n+k}^{\pm}$ and commutes with $\omega_k^{\bbC}$. Using the isomorphisms \eqref{IsoOneEven} and \eqref{IsoOddOdd} combined with the above discussion, we obtain the following identifications for Clifford multiplication with vectors $(x,v) \in \bbR^n \times \bbR^k$:
\begin{align}\label{CliffordMultCases}
\gamma_{n+k}((x,v))(\psi \otimes \nu ) \cong 
\begin{cases}
(\gamma_n(x) \psi) \otimes \nu + \bar{\psi} \otimes (\gamma_k(v) \nu ), \ \text{ if $n$ is even,} \\
(\gamma_n(x) \psi) \otimes \bar{\nu} + \psi \otimes (\gamma_k(v) \nu), \ \text{ if $k$ is even,} \\
\ \ \\
( \gamma_n(x) \psi^+ \oplus - \gamma_n(x) \psi^-) \otimes \nu + (\psi^- \oplus \psi^+) \otimes (\gamma_k(v) \nu ), \\
\ \text{if $n$ and $k$ are odd.}
\end{cases}
\end{align}
Here, $\gamma_m$ denotes the Clifford multiplication of $\bbR^m$ on $\Sigma_m$ and $\bar{\psi}$ is the complex conjugation. In the case, where $n$ and $k$ are even, both possibilities are isomorphic to each other.

Now we return to the case of a Riemannian submersion $f:(M,g) \rightarrow (B,h)$ where $M$ is an $(n+k)$-dimensional  spin manifold with a fixed spin structure and $\dim(B) = n$.  There is always locally an induced spin structure on $B$ with a locally defined spinor bundle $\Sigma B$ and similarly a locally defined induced spin structure on the vertical distribution $\mathcal{V}$, i.e.\ also a locally defined spinor bundle $\Sigma \mathcal{V}$. Applying the above discussion pointwise we conclude that
\begin{align}\label{IsoSpinorManifold}
\Sigma M \cong
\begin{cases}
f^{\ast}(\Sigma B) \otimes \Sigma \mathcal{V}, & \text{if $n$ or $k$ is even,}\\
\left( f^{\ast}(\Sigma^+ B) \oplus f^{\ast}(\Sigma^- B) \right) \otimes \Sigma \mathcal{V}, & \text{if $n$ and $k$ are odd.}
\end{cases}
\end{align}

%
%

As we have seen in the last section, the base manifold $B$ and the vertical distribution $\mathcal{V}$, in general, are not spin. Thus, the spinor bundles $\Sigma B$ and $\Sigma \mathcal{V}$ are only defined locally but their tensor product is defined globally. The rules for Clifford multiplication \eqref{CliffordMultCases} carry over to the spinor bundle $\Sigma M$. In the setting of Riemannian manifolds, these rules allow us to distinguish Clifford multiplication with horizontal and vertical vector fields.

\begin{notation}
Let $f: (M,g)  \rightarrow (B,h)$ be a Riemannian submersion such that $M$ has a fixed spin structure. For abbreviation we write
\begin{align*}
\Sigma M \cong f^{\ast} (\SpinB) \otimes \Sigma \mathcal{V},
\end{align*}
where
\begin{align*}
\SpinB \coloneqq
\begin{cases}
\Sigma B, & \text{if $n$ or $k$ is even,}\\
\Sigma^+ B \oplus \Sigma^- B, & \text{if $n$ and $k$ are odd.}
\end{cases}
\end{align*}
\end{notation}


Next we calculate the spinorial connection on $\Sigma M$ with respect to a local orthonormal frame $(\xi_1, \ldots, \xi_n, \zeta_1, \ldots, \zeta_k)$ such that $(\xi_1, \ldots, \xi_n)$ is the horizontal lift of a local orthonormal frame $(\check{\xi}_1, \ldots, \check{\xi}_n)$ in the base space $B$ and $(\zeta_1, \ldots, \zeta_k)$ is a locally defined vertical orthonormal frame . 

We recall that for any vector field $X$ and spinor $\Phi$ the spinorial connection $\nabla^M$ on $M$ is  locally given by
\begin{align*}
\nabla^M_X \Phi = X(\Phi) + \frac{1}{4} \sum_{i,j=1}^{n+k} g(\nabla_X e_i, e_j)\gamma(e_i) \gamma(e_j) \Phi,
\end{align*}
where $(e_1, \ldots, e_{n+k})$ is a local orthonormal frame and the action of the Dirac operator on a spinor $\Phi$ is locally define via
\begin{align*}
D^M\Phi = \sum_{i=1}^{n+k} \gamma(e_i) \nabla^M_{e_i} \Phi.
\end{align*}

It follows from \eqref{IsoSpinorManifold} that locally any spinor $\Phi$ on $M$ can be written as a finite linear combination $\Phi = \sum_{l} f^{\ast}\varphi_l \otimes \nu_l$. The next lemma follows from straightforward calculations, where we use that the Christoffel symbols of $M$ are given by \eqref{ChristoffelSymbols}.

\begin{lem}\label{ConnectionFormula}
Let $f:(M^{n+k},g) \rightarrow (B^n,h)$ be a Riemannian submersion. Suppose that $M$ is a spin manifold with a fixed spin structure. With respect to a local orthonormal frame $(\xi_1, \ldots, \xi_n, \zeta_1, \ldots, \zeta_k)$ as above, any spinor $\Phi = f^{\ast}\varphi \otimes \nu$ satisfies the following identities:
\begin{align*}
\nabla^M_{\xi_{\alpha}}\Phi &= ( f^{\ast} \nabla^B_{\xi_{\alpha}} \varphi )\otimes \nu + f^{\ast}\varphi \otimes \nabla^{\mathcal{V}}_{\xi_{\alpha}} \nu  + \frac{1}{2}  \sum_{\beta =1}^n \gamma(\xi_{\beta})  \gamma \left( A(\xi_{\alpha}, \xi_{\beta}) \right) \Phi  \\
& \eqqcolon \nabla^{\mathcal{T}}_{\xi_{\alpha}}\Phi + \frac{1}{2}  \sum_{\beta =1}^n \gamma(\xi_{\beta}) \gamma \left( A(\xi_{\alpha}, \xi_{\beta}) \right) \Phi, \\ 
\nabla^M_{\zeta_a}\Phi &=  f^{\ast}\varphi \otimes \nabla^Z_{\zeta_a} \nu + \frac{1}{2} \sum_{b=1}^k \gamma( \zeta_b ) \gamma \left( T(\zeta_a, \zeta_b) \right) \Phi + \frac{1}{4} \sum_{\alpha =1}^n \gamma(\xi_{\alpha}) \gamma \left( A(\xi_{\alpha}, \zeta_i) \right) \Phi \\
&\eqqcolon \nabla^Z_{\zeta_a} \Phi + \frac{1}{2} \sum_{b=1}^k \gamma( \zeta_b ) \gamma \left( T(\zeta_a, \zeta_b) \right) \Phi + \frac{1}{4} \sum_{\alpha =1}^n \gamma(\xi_{\alpha}) \gamma \left( A(\xi_{\alpha}, \zeta_a) \right) \Phi,\\
D^M \Phi&= \sum_{\alpha = 1}^n \gamma(\xi_{\alpha}) \nabla^{\mathcal{T}}_{\xi_{\alpha}}\Phi + \sum_{a = 1}^k \gamma(\zeta_a) \nabla^{Z}_{\zeta_a} \Phi  - \frac{1}{2} \sum_{a=1}^k \gamma \left( T(\zeta_a, \zeta_a) \right) \Phi \\
& \ \ \  + \frac{1}{2} \sum_{\substack{\alpha, \beta = 1 \\ \alpha < \beta}}^n \gamma \left( A(\xi_{\alpha}, \xi_{\beta}) \right) \gamma( \xi_{\alpha}) \gamma( \xi_{\beta} ) \Phi\\
&\eqqcolon D^{\mathcal{T}} \Phi + D^Z \Phi  - \frac{1}{2} \sum_{a=1}^k \gamma \left( T(\zeta_a, \zeta_a) \right) \Phi + \frac{1}{2} \gamma( A ) \Phi.
\end{align*}
Here $\nabla^B$, $\nabla^{\mathcal{V}}$ and $\nabla^Z$ are the induced connections by the respective connections on $TM$, defined in Section \ref{Section GeometryAffineFiberBundles}.
\end{lem}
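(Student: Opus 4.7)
The plan is to start from the standard local formula
\[
\nabla^M_X \Phi = X(\Phi) + \frac{1}{4} \sum_{i,j} g(\nabla_X e_i, e_j)\, \gamma(e_i)\gamma(e_j)\Phi
\]
applied to the split orthonormal frame of Definition \ref{SplitFrame}, and to decompose the connection one-form $g(\nabla_X e_i, e_j)$ into blocks according to whether each index is horizontal or vertical. The Christoffel symbols \eqref{ChristoffelSymbols} already give this decomposition: the $(H,H)$-block is $\check{\Gamma}$ (lifted from $B$), the $(V,V)$-block is $\hat{\Gamma}$ (intrinsic to the fibre), and the mixed blocks encode $T$, $A$, and the commutator $[\xi_\alpha,\zeta_a]$ that enters $\nabla^{\mathcal{V}}$. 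Each block will assemble, under the isomorphism $\Sigma M \cong f^{\ast}\SpinB \otimes \Sigma\mathcal{V}$ and the Clifford rules \eqref{CliffordMultCases}, into one of the summands on the right-hand side of the lemma.

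For $\nabla^M_{\xi_\alpha}\Phi$ I would treat each block separately. The $(H,H)$-block $\frac{1}{4}\sum_{\beta,\gamma}\check{\Gamma}^\gamma_{\alpha\beta}\gamma(\xi_\beta)\gamma(\xi_\gamma)$ involves only Clifford products of horizontal vectors, so by \eqref{CliffordMultCases} it acts only on the $f^{\ast}\SpinB$-factor of $\Phi = f^{\ast}\varphi \otimes \nu$ and assembles into $f^{\ast}(\nabla^B_{\check{\xi}_\alpha}\varphi) \otimes \nu$. The $(V,V)$-block uses the coefficients $\Gamma^b_{\alpha a} = \langle \mathcal{W}_{\xi_\alpha}e_a, e_b\rangle$ already computed in Section~\ref{SectionOperatorsRiemannianAffine}; its spin-lift is by construction the connection $\nabla^{\mathcal{V}}_{\xi_\alpha}$ acting on $\nu$. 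The $(H,V)$- and $(V,H)$-blocks combine: using metric compatibility $g(\nabla_{\xi_\alpha}\xi_\beta,\zeta_a) = -g(\xi_\beta,\nabla_{\xi_\alpha}\zeta_a)$ together with $\gamma(\xi_\beta)\gamma(\zeta_a) = -\gamma(\zeta_a)\gamma(\xi_\beta)$ they double up, and since $(\nabla_{\xi_\alpha}\xi_\beta)^V = A(\xi_\alpha,\xi_\beta)$, they collapse into the single term $\frac{1}{2}\sum_\beta \gamma(\xi_\beta)\gamma(A(\xi_\alpha,\xi_\beta))\Phi$.

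The formula for $\nabla^M_{\zeta_a}\Phi$ is obtained in the same way. The $(V,V)$-block yields the spinorial lift of $\nabla^Z_{\zeta_a}$; the mixed $(V,H)/(H,V)$-block uses $g(\nabla_{\zeta_a}\zeta_b,\xi_\beta) = g(T(\zeta_a,\zeta_b),\xi_\beta)$ (with $T(\zeta_a,\zeta_b)$ horizontal) and collapses into $\frac{1}{2}\sum_b \gamma(\zeta_b)\gamma(T(\zeta_a,\zeta_b))\Phi$; and the $(H,H)$-block, after applying the O'Neill identity $g(A(\xi_\alpha,\xi_\beta),\zeta_a) = -g(A(\xi_\alpha,\zeta_a),\xi_\beta)$, becomes $\frac{1}{4}\sum_\alpha \gamma(\xi_\alpha)\gamma(A(\xi_\alpha,\zeta_a))\Phi$. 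The Dirac-operator formula then follows by contracting $D^M\Phi = \sum_\alpha \gamma(\xi_\alpha)\nabla^M_{\xi_\alpha}\Phi + \sum_a \gamma(\zeta_a)\nabla^M_{\zeta_a}\Phi$: the trace of the $T$-term over the vertical directions produces the correction $-\frac{1}{2}\sum_a \gamma(T(\zeta_a,\zeta_a))\Phi$ (since $\sum_a T(\zeta_a,\zeta_a)$ is the mean-curvature vector of the fibre, which is horizontal), and the two $A$-tensor contributions combine, using antisymmetry of $A(\xi_\alpha,\xi_\beta)$ in $\alpha,\beta$, into the final $\frac{1}{2}\gamma(A)\Phi$-term.

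The main obstacle is purely bookkeeping. The only genuinely delicate point is the case when both $n$ and $k$ are odd: there the splitting of $\Sigma M$ is not a plain tensor product but $(f^{\ast}\Sigma^+ B \oplus f^{\ast}\Sigma^- B)\otimes \Sigma\mathcal{V}$, and the Clifford rules \eqref{CliffordMultCases} have $\gamma_n(x)$ acting with opposite signs on the two $\Sigma_n^{\pm}$-summands while $\gamma_k(v)$ exchanges them. One has to verify that each assembly step above is compatible with this sign convention; because all connection coefficients are real and the rearrangements use only Clifford anticommutation, the compatibility holds, but it must be checked termwise.
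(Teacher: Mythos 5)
Your proposal is correct and takes exactly the approach the paper intends: the paper dispatches this lemma with the phrase ``follows from straightforward calculations, where we use that the Christoffel symbols of $M$ are given by \eqref{ChristoffelSymbols},'' and you have supplied precisely that block decomposition of the spin-connection one-form, with the doubling argument for the mixed blocks, the O'Neill identity $g(A(\xi_\alpha,\xi_\beta),\zeta_a) = -g(A(\xi_\alpha,\zeta_a),\xi_\beta)$ for the $(H,H)$-block of $\nabla^M_{\zeta_a}$, and the correct assembly of the Dirac formula. One small clarification worth making explicit in your last paragraph: in the $T$-term of $D^M$, the off-diagonal contributions $\gamma(\zeta_a)\gamma(\zeta_b)\gamma(T(\zeta_a,\zeta_b))$ with $a\neq b$ cancel because $T$ is symmetric in its vertical arguments while $\gamma(\zeta_a)\gamma(\zeta_b)$ is antisymmetric, leaving only the diagonal $a=b$ terms where $\gamma(\zeta_a)^2=-1$ produces $-\frac{1}{2}\sum_a\gamma(T(\zeta_a,\zeta_a))$; similarly, the two $A$-contributions do not merely ``combine'' but partially cancel, $\sum_{\alpha<\beta}\gamma(A(\xi_\alpha,\xi_\beta))\gamma(\xi_\alpha)\gamma(\xi_\beta) - \frac{1}{2}\sum_{\alpha<\beta}\gamma(A(\xi_\alpha,\xi_\beta))\gamma(\xi_\alpha)\gamma(\xi_\beta) = \frac{1}{2}\gamma(A)$.
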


Observe that in the above lemma we introduced the twisted connection  $ \nabla^{\mathcal{T}} = f^{\ast} \nabla^B \otimes \nabla^{\mathcal{V}}$ on $f^{\ast} \SpinB \otimes \Sigma \mathcal{V}$. Moreover, $D^{\mathcal{T}}$ and $D^Z$ are the corresponding Dirac operators for  $\nabla^{\mathcal{T}}$, resp.\ $\nabla^Z$ and 
\begin{align*}
\gamma(A) \coloneqq  \sum_{\substack{\alpha, \beta = 1 \\ \alpha < \beta}}^n \gamma \left( A(\xi_{\alpha}, \xi_{\beta}) \right) \gamma( \xi_{\alpha}) \gamma( \xi_{\beta} )
\end{align*} 
is a symmetric operator acting on spinors. 

\subsection{Spin structures with affine parallel spinors} \label{SectionAffineParallelSpinors}

Let $f: (M,g)  \rightarrow (B,h)$ be a Riemannian affine fiber bundle with infranil fiber $Z$. We set $n \coloneqq \dim(B)$ and $k \coloneqq \dim(Z)$. Further, we assume that $(M,g)$ has a fixed spin structure.  Recall from the beginning of Section \ref{SectionInducedStructures} that there is an induced spin structure on the fibers $Z_p$, $p \in B$ and that all these spin structures are equivalent. Since the induced metric $\hat{g}_p$ is affine parallel for all $p \in B$  the affine connection $\nabla^{\aff}$ induces an affine connection, also denoted by $\nabla^{\aff}$ on $\Sigma M$. The main result of this section is that the space of affine parallel $L^2$-spinors,
\begin{align*}
\mathcal{S}^{\aff} \coloneqq \lbrace \varphi \in L^2(\Sigma M) : \nabla^{\aff} \varphi = 0 \rbrace
\end{align*}
is isometric to a Clifford bundle over $B$.

For any $p \in B$ we consider the space of affine parallel spinors restricted of each fiber $Z_p$, i.e.\
\begin{align*}
\mathcal{P}_p \coloneqq \lbrace \varphi \in L^2(\Sigma Z_p) : \nabla^{\aff} \varphi = 0 \rbrace.
\end{align*}
Here $\Sigma Z_p$ is the spinor bundle of the induced spin structure on $Z_p$. Since the spin structures on the fibers $Z_p$ and $Z_q$ are equivalent for all $p,q \in B$, it follows that the spaces $\mathcal{P}_p$ and $\mathcal{P}_q$ are isomorphic to each other. Moreover, $\mathcal{P}_p$ is finite dimensional \cite[p.\ 183]{LottDirac}, \cite[Appendix A]{RoosDiss}. Let $\mathcal{P} \coloneqq \bigsqcup_{p \in B} \mathcal{P}_p \rightarrow B$ be the associated quasi bundle.

\begin{lem}\label{QIsometry}
Let $f:(M,g) \rightarrow (B,h)$ be a Riemannian affine fiber bundle such that $M$ has a fixed spin structure. Then there is an isometry
\begin{align*}
Q: L^2( \SpinB \otimes  \mathcal{P})&\rightarrow \mathcal{S}^{\aff}.
\end{align*}
Moreover, the connection $\nabla^{\mathcal{T}}$ induces a connection $\widecheck{\nabla}^{\mathcal{T}}$ on $\SpinB \otimes  \mathcal{P}$. Taking a split orthonormal frame $(\xi_1,\ldots, \xi_n,\zeta_1,\ldots, \zeta_k)$ around $x$, any $\check{\Phi} \in L^2(\SpinB \otimes \mathcal{P})$ satisfies
\begin{align*}
\nabla^M_{\xi_{\alpha}}Q(\check{\Phi})_x &= Q(\widecheck{\nabla}^{\mathcal{T}} \check{\Phi})_x + \frac{1}{2} \sum_{\beta=1}^n \gamma(\xi_{\beta}) \gamma(A (\xi_{\alpha}, \xi_{\beta} ) ) Q(\check{\Phi})_x \\
& \ \ \ - \frac{\check{\xi}_{\alpha}(\vol(Z_{f(x)}))}{2 \vol(Z_{f(x)})} Q(\check{\Phi})_x.
\end{align*}
\end{lem}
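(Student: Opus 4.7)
\medskip

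My plan is to build $Q$ by pointwise evaluation and then read off the connection formula directly from Lemma~\ref{ConnectionFormula}, with the volume term emerging as the discrepancy between the pointwise inner product on $\Sigma\mathcal V$ and the $L^2$ inner product on $\mathcal P$. First, using the decomposition $\Sigma M \cong f^{\ast}\SpinB \otimes \Sigma\mathcal V$ from Section~\ref{SectionSpinSubmersion}, I note that for each $p \in B$ the restriction $\Sigma M|_{Z_p}$ is canonically $(\SpinB)_p \otimes \Sigma Z_p$, so an element $\check\Phi(p)=\check\varphi(p)\otimes\nu_p\in (\SpinB)_p\otimes\mathcal P_p$ gives a section $x\mapsto \check\varphi(p)\otimes\nu_p(x)$ of $\Sigma M|_{Z_p}$. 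Gluing over $p$ defines $Q(\check\Phi)\in\Gamma(\Sigma M)$. Because $\nabla^{\aff}$ restricted to each fiber coincides with the canonical affine connection of $Z_p$, and $\nu_p\in\mathcal P_p$ is affine parallel, one has $\nabla^{\aff}Q(\check\Phi)=0$, so $Q$ lands in $\mathcal S^{\aff}$.

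To show $Q$ extends to an isometry $L^2(\SpinB\otimes\mathcal P)\to\mathcal S^{\aff}$, I exploit that each $\nu_p\in\mathcal P_p$ has constant pointwise norm: since $\nabla^{\aff}\hat g_p=0$ and $\nabla^{\aff}\nu_p=0$, the function $|\nu_p|^2$ is affine parallel, hence constant on $Z_p$. Thus $\|\nu_p\|^2_{\mathcal P_p}=|\nu_p|^2\vol(Z_p)$, and Fubini gives
\begin{align*}
\|Q(\check\Phi)\|^2_{L^2(M)}=\int_B\!\!\int_{Z_p}|\check\varphi(p)|^2|\nu_p|^2\,d\hat g_p\,dh(p)=\int_B|\check\varphi(p)|^2\|\nu_p\|^2_{\mathcal P_p}\,dh(p),
\end{align*}
which is the $L^2$ norm on $\SpinB\otimes\mathcal P$. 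Surjectivity onto $\mathcal S^{\aff}$ is immediate: any $\Psi\in\mathcal S^{\aff}$, restricted to $Z_p$, lies in $(\SpinB)_p\otimes\mathcal P_p$, yielding $\check\Phi\in\Gamma(\SpinB\otimes\mathcal P)$ with $Q(\check\Phi)=\Psi$. A density argument then upgrades $Q$ to an isometric isomorphism on the $L^2$ completions.

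For the connection statement, I apply the identity for $\nabla^M_{\xi_\alpha}$ from Lemma~\ref{ConnectionFormula} directly to $\Phi=Q(\check\Phi)$ with $\check\Phi=\check\varphi\otimes\nu$, using that $\nabla^{\mathcal V}$ preserves the subbundle of affine parallel vertical spinors and hence descends to a connection on $\mathcal P$ over $B$. The terms $(f^{\ast}\nabla^B_{\xi_\alpha}\check\varphi)\otimes\nu$ and $f^{\ast}\check\varphi\otimes\nabla^{\mathcal V}_{\xi_\alpha}\nu$ together define the descent $\widecheck\nabla^{\mathcal T}$ of $\nabla^{\mathcal T}=f^{\ast}\nabla^B\otimes\nabla^{\mathcal V}$, while the remaining summand of Lemma~\ref{ConnectionFormula} is exactly $\tfrac12\sum_\beta\gamma(\xi_\beta)\gamma(A(\xi_\alpha,\xi_\beta))Q(\check\Phi)$. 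The subtle point—and the main obstacle—is that $\widecheck\nabla^{\mathcal T}$ must be taken to be the $L^2$-metric connection on $\SpinB\otimes\mathcal P$, whereas the pointwise restriction $\nabla^{\mathcal V}_{\xi_\alpha}\nu$ is metric for $|\cdot|$ on $\Sigma\mathcal V$, not for $\|\cdot\|_{\mathcal P}=|\cdot|\sqrt{\vol(Z)}$. Differentiating the relation $\|\nu_p\|^2_{\mathcal P_p}=|\nu|^2\vol(Z_p)$ in the $\check\xi_\alpha$ direction, and using metricity of $\nabla^{\mathcal V}$, one finds that the pointwise descent and the $L^2$-metric descent of $\nabla^{\mathcal V}$ differ by the scalar $\tfrac{\check\xi_\alpha(\vol(Z))}{2\vol(Z)}\,\mathrm{id}$. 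Tracking this correction through the identification $Q$ yields precisely the extra term $-\tfrac{\check\xi_\alpha(\vol(Z_{f(x)}))}{2\vol(Z_{f(x)})}Q(\check\Phi)_x$ in the stated formula.
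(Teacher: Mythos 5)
Your proposal is correct and arrives at the same formula, but it takes a genuinely different bookkeeping route than the paper. The paper defines $Q^{-1}$ \emph{with} an explicit factor $\sqrt{\vol(Z_{f(x)})}$, which makes the isometry ``evident'' and then produces the volume correction term by differentiating that factor (the paper leaves this as a ``straightforward calculation''). You instead build $Q$ without any volume factor, and prove the isometry via the key observation that an affine parallel spinor has constant pointwise norm on each fiber (a consequence of $\nabla^{\aff}\hat g_p=0$ and $\nabla^{\aff}\nu=0$), so $\|\nu_p\|^2_{L^2(Z_p)}=|\nu_p|^2\vol(Z_p)$ and Fubini does the rest. The volume term in the connection formula then arises as the discrepancy between the pointwise-metric descent of $\nabla^{\mathcal V}$ and the $L^2$-metric connection on $\mathcal P$; your sign and coefficient check out. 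In effect, the two proofs differ in where the $\sqrt{\vol(Z)}$ rescaling is hidden: the paper puts it into the definition of $Q$ (and correspondingly equips $\mathcal P_p$ with the pointwise norm), while you put it into the inner product on $\mathcal P_p$ (the $L^2$ norm) and keep $Q$ the naive restriction map. Your version has the advantage of making transparent \emph{why} the volume term appears, and of making explicit the constancy of $|\nu_p|$, which the paper uses implicitly. One small point you should state explicitly: gluing the pointwise definition of $Q$ into a well-defined global section of $\Sigma M$ requires the structure group to lie in $\Aff(Z)$, which is what preserves both $\nabla^{\aff}$ and the subspace $\mathcal P_p\subset L^2(\Sigma Z_p)$; you invoke the decomposition locally but do not mention this globalization step, which the paper does.
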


\begin{proof}
It follows from the discussion in Section \ref{SectionSpinSubmersion} that $\Sigma M \cong f^{\ast}(\SpinB) \otimes \Sigma \mathcal{V}$, where the spinor bundles $\SpinB$ and $\Sigma \mathcal{V}$ are in general only defined locally. Let $f^{-1}(U) \cong U \times Z$ be a trivializing neighborhood on which $\SpinB$ and $\Sigma \mathcal{V}$ are well-defined. Then any spinor $\Phi$ restricted to $f^{-1} (U)$ can be written as finite linear combination $\Phi = \sum_{l} f^{\ast}\varphi_l \otimes \nu_l$. By linearity it suffices to consider elementary tensors $f^{\ast}\varphi \otimes \nu$. We observe that
\begin{align*}
\nabla^{\aff}(f^{\ast} \varphi \otimes \nu) = f^{\ast} \varphi \otimes \nabla^{\aff} \nu.
\end{align*}
Thus, a spinor is affine parallel if and only if $\nu$, restricted to any fiber $f^{-1}(p) = Z_p$, is an affine parallel spinor of $Z_p$. Hence, on any trivializing neighborhood we obtain an isomorphism
\begin{align*}
Q^{-1}: \mathcal{S}^{\aff} &\rightarrow L^2(\SpinB \otimes  \mathcal{P}), \\
(f^{\ast} \varphi \otimes \nu)_x &\mapsto  \sqrt{\vol (Z_{f(x)})} \  \varphi_{f(x)} \otimes \nu_{f(x)}, 
\end{align*}
where $\nu_{f(x)}$ denotes the restriction of $\nu$ to the fiber $Z_{f(x)}$. Due to the factor $\sqrt{\vol (Z_{f(x)})}$ it is evident that $Q^{-1}$ defines an isometry. Here $\vol(Z_{f(x)})$ is the volume of the fiber $Z_{f(x)}$ with respect to the induced affine parallel metric $\hat{g}_{f(x)}$. Since the structure group of the Riemannian affine fiber bundle $f: (M,g) \rightarrow (B,h)$ lies in $\Aff(Z)$, $Q^{-1}$ extends to a well-defined global isometry. Taking its inverse gives the desired map $Q$.

It follows from the discussion in Section \ref{Section GeometryAffineFiberBundles} that $\mathcal{S}^{\aff}$ is invariant under the action of $\nabla^{\mathcal{T}}$. Hence, there is an induced connection $\widecheck{\nabla}^{\mathcal{T}}$ on $\SpinB \otimes \mathcal{P}$. Using Lemma \ref{ConnectionFormula} the claimed identity follows from a straightforward calculation.
\end{proof}

\begin{cor}\label{QDirac}
Let $f: (M,g) \rightarrow (B,h)$ be a Riemannian affine fiber bundle with a fixed spin structure on $M$ such that $\mathcal{S}^{\aff}$ is nontrivial. With respect to a split orthonormal frame $(\xi_1, \ldots, \xi_n, \zeta_1, \ldots, \zeta_k)$, any spinor $\Phi \in \mathcal{S}^{\aff}$ satisfies
\begin{align*}
D^M\Phi &= Q \circ \check{D}^{\mathcal{T}} \circ Q^{-1} \Phi + \frac{1}{2}\sum_{\substack{a,b,c=1\\ b < c}}^k \Gamma_{ab}^c \gamma(\zeta_a) \gamma(\zeta_b) \gamma(\zeta_c)  \Phi + \frac{1}{2} \gamma( A ) \Phi\\
&\eqqcolon Q \circ \check{D}^{\mathcal{T}} \circ Q^{-1} \Phi + \frac{1}{2}\gamma(\mathcal{Z}) \Phi + \frac{1}{2} \gamma(A) \Phi.
\end{align*}
Here, $\check{D}^{\mathcal{T}}$ is the Dirac operator on $\SpinB \otimes \mathcal{P}$ associated to the connection $\check{ \nabla}^{\mathcal{T}}$.
\end{cor}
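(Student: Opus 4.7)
The plan is to start from the pointwise Dirac operator formula in Lemma \ref{ConnectionFormula} and convert each summand into an expression involving $\check{D}^{\mathcal{T}}$, $\gamma(\mathcal{Z})$ and $\gamma(A)$. Writing $V(p) \coloneqq \vol(Z_p)$, for any affine parallel $\Phi = Q(\check{\Phi})$ Lemma \ref{ConnectionFormula} gives
\begin{align*}
D^M \Phi = D^{\mathcal{T}} \Phi + D^Z \Phi - \tfrac{1}{2} \sum_{a=1}^k \gamma(T(\zeta_a, \zeta_a))\Phi + \tfrac{1}{2}\gamma(A)\Phi,
\end{align*}
so the $\gamma(A)$-term is already in the desired form and it suffices to analyse the first three summands.

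First I would handle $D^{\mathcal{T}}$: the relation in Lemma \ref{QIsometry} reads $\nabla^{\mathcal{T}}_{\xi_\alpha} Q(\check{\Phi}) = Q(\widecheck{\nabla}^{\mathcal{T}}_{\xi_\alpha}\check{\Phi}) - \tfrac{\check{\xi}_\alpha(V)}{2V}Q(\check{\Phi})$ once the explicit $A$-correction from Lemma \ref{ConnectionFormula} is moved to the other side. Clifford-multiplying by $\gamma(\xi_\alpha)$ and summing yields
\begin{align*}
D^{\mathcal{T}} Q(\check{\Phi}) = Q \circ \check{D}^{\mathcal{T}} \circ Q^{-1} \Phi \;-\; \tfrac{1}{2} \sum_{\alpha=1}^n \tfrac{\check{\xi}_\alpha(V)}{V}\,\gamma(\xi_\alpha)\,\Phi.
\end{align*}

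Next I would treat $D^Z \Phi$. Since $\nabla^{\aff}\Phi = 0$ and $\mathcal{Z} = \nabla^Z - \nabla^{\aff}$, the action of $\nabla^Z_{\zeta_a}$ on $\Phi$ is exactly the spinorial lift of the skew-symmetric endomorphism $\mathcal{Z}(\zeta_a) \in \mathfrak{so}(\mathcal{V})$, whose matrix entries in the frame $(\zeta_1,\ldots,\zeta_k)$ are the Christoffel symbols $\Gamma_{ab}^c$ of \eqref{ChristoffelSymbols}. The standard formula for the $\mathfrak{so} \to \mathfrak{spin}$ lift then gives
\begin{align*}
\nabla^Z_{\zeta_a}\Phi = \tfrac{1}{2}\sum_{b<c}\Gamma_{ab}^c\,\gamma(\zeta_b)\gamma(\zeta_c)\,\Phi, \qquad D^Z\Phi = \tfrac{1}{2}\sum_{\substack{a,b,c=1\\ b<c}}^k \Gamma_{ab}^c\, \gamma(\zeta_a)\gamma(\zeta_b)\gamma(\zeta_c)\,\Phi = \tfrac{1}{2}\gamma(\mathcal{Z})\Phi,
\end{align*}
which already matches the middle term in the claim.

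The remaining and key step is to show that the volume-correction from $D^{\mathcal{T}}$ cancels exactly against the mean-curvature term $-\tfrac{1}{2}\sum_a \gamma(T(\zeta_a,\zeta_a))$. This amounts to the classical identity
\begin{align*}
\sum_{a=1}^k T(\zeta_a,\zeta_a) \;=\; -\sum_{\alpha=1}^n \tfrac{\check{\xi}_\alpha(V)}{V}\,\xi_\alpha,
\end{align*}
i.e.\ the mean curvature vector of the fibers of a Riemannian submersion is the horizontal gradient of $-\log V$. Since the structure group lies in $\Aff(Z)$ and $\hat{g}_p$ is affine parallel, this identity is obtained by differentiating $V(p) = \int_{Z_p}\mathrm{dvol}_{\hat g_p}$ along a basic horizontal vector field and using that the pullback of the fiber-volume form changes by the trace of $T$; for a general Riemannian submersion this is standard. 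This final cancellation is the step I expect to require the most care, both because one must track the sign conventions coming from the $\sqrt{V}$-factor in $Q$ and because the statement relies crucially on the affine-parallel choice of the frame $(\zeta_a)$. Combining the three computations gives the claimed formula.
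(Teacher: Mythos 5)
Your proposal is correct and follows essentially the same route as the paper's proof: decompose $D^M$ via Lemma \ref{ConnectionFormula}, pass the horizontal part through $Q$ using Lemma \ref{QIsometry}, identify $D^Z\Phi$ with $\frac{1}{2}\gamma(\mathcal{Z})\Phi$ using $\nabla^{\aff}\Phi=0$, and cancel the mean-curvature term against the logarithmic derivative of the fiber volume. The paper handles that last cancellation by citing \cite[Lemma 1.17.2]{GilkeySubmersion}, and your remark that the pointwise identity relies on the affine-parallel fiber metrics (rather than holding for arbitrary Riemannian submersions, where only the fiberwise-integrated version is automatic) is exactly the right caveat.
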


\begin{proof}
Let $\Phi$ be an affine parallel spinor. Since $f: (M,g) \rightarrow (B,h)$ is a Riemannian affine fiber bundle, $\mathcal{S}^{\aff}$ is invariant under the action of the Dirac operator $D^M$. With respect to a split orthonormal frame $(\xi_1, \ldots, \xi_n, \zeta_1, \ldots, \zeta_k)$, we obtain
\begin{align*}
D^M \Phi &= \sum_{\alpha = 1}^n \gamma(\xi_{\alpha}) \nabla_{\xi_{\alpha}} \Phi + \sum_{i=1}^k \gamma(\zeta_a) \nabla_{\zeta_a} \Phi \\
&= \sum_{\alpha = 1}^n \gamma(\xi_{\alpha}) \nabla_{\xi_{\alpha}} \left(Q \circ Q^{-1} (\Phi) \right) + \frac{1}{4}\sum_{a=1}^k \gamma(\zeta_a) \left( \sum_{b,c=1}^k \Gamma_{ab}^c\gamma(\zeta_b)\gamma(\zeta_c) \Phi \right)\\
& \ \ \ - \frac{1}{2}\sum_{a=1}^k \gamma(T(\zeta_a, \zeta_a) )\Phi + \frac{1}{4}\sum_{a=1}^k \sum_{\alpha=1}^n \gamma(\zeta_a) \gamma(\xi_{\alpha}) \gamma(A(\xi_{\alpha},\zeta_a) )\Phi\\
&=  Q \circ \check{D}^{\mathcal{T}} \circ Q^{-1} \Phi + \frac{1}{2} \sum_{\alpha =1}^n\sum_{\beta=1}^n \gamma(\xi_{\alpha})\gamma(\xi_{\beta}) \gamma(A (\xi_{\alpha}, \xi_{\beta} ) ) \Phi\\
& \ \ \ - \frac{1}{2}\gamma\left(\frac{\grad(\vol(Z_{f(x)})}{\vol(Z_{f(x)})} \right)\Phi + \frac{1}{2}\sum_{\substack{a,b,c=1\\ b < c}}^k \Gamma_{ab}^c \gamma(\zeta_a) \gamma(\zeta_b) \gamma(\zeta_c)  \Phi \\
& \ \ \ - \frac{1}{2}\sum_{a=1}^k \gamma(T(\zeta_a, \zeta_a) )\Phi - \frac{1}{4}\sum_{\alpha=1}^n \sum_{ \beta=1}^n  \gamma(\xi_{\alpha}) \gamma(\xi_{\beta}) \gamma(A(\xi_{\alpha},\xi_{\beta}) )\Phi \\
&=  Q \circ \check{D}^{\mathcal{T}} \circ Q^{-1} \Phi + \frac{1}{2}\gamma(\mathcal{Z}) \Phi + \frac{1}{2} \gamma(A) \Phi.
\end{align*}
Here we used Lemma \ref{ConnectionFormula}, Lemma \ref{QIsometry}, and the formulas for the Christoffel symbols,  \eqref{ChristoffelSymbols}. The last line follows from the relation (see for instance \cite[Lemma 1.17.2]{GilkeySubmersion}) 
\begin{align*}
\sum_{a=1}^k T(\zeta_a,\zeta_a) = -\grad (\ln (\vol(Z_p))).
\end{align*}
between the mean curvature $\sum_{a=1}^k T(\zeta_a,\zeta_a)$ and the change of the volume of the fiber $Z_p$. 
\end{proof}

\section{The Dirac operator under collapse to a smooth limit space }\label{SectionMainTheorem}

In \cite{LottDirac} Lott studied the behavior of Dirac eigenvalues on arbitrary collapsing sequences in $\mfdspace$. There Lott combined his results for the eigenvalues of the $p$-form Laplacian on collapsing sequences \cite{LottLaplaceSmooth, LottLaplaceSingular} with the  Bochner-type formulas for the Dirac operator. Moreover, Lott's results also hold for the Dirac operator on differential forms, i.e.\ the operator
\begin{align*}
D =\mathrm{d} + \mathrm{d}^{\ast}: \Omega^{\ast}(M) \rightarrow \Omega^{\ast}(M),
\end{align*}
where $\mathrm{d}^{\ast}$ is the adjoint of the exterior derivative $\mathrm{d}$ with respect to the $L^2$-inner product.

\begin{thm}[{\cite[Theorem 3]{LottDirac}}]\label{LottSingular}
Given $n \in \bbN$ and $G \in \lbrace \SO(n), \Spin(n) \rbrace$, let $(M_i, g_i)_{i \in \bbN}$ be a sequence of connected closed oriented $n$-dimensional Riemannian manifolds with a $G$-structure. Let $V$ be a $G$-Clifford module. Suppose that for some $d, K > 0$ and for each $i \in \bbN$ we have $\diam(M_i) \leq d$ and $\Vert R^{M_i}\Vert_{\infty} \leq K$. Then there is
\begin{enumerate}
\item a subsequence of $(M_i,g_i)_{i \in \bbN}$ which we relabel as $( M_i )_{i \in \bbN}$,
\item a smooth closed $G$-manifold $\check{X}$ with a $G$-invariant Riemannian metric $g^{T\check{X}}$ which is $C^{1,\alpha}$-regular for all $\alpha \in  [0,1)$,
\item a positive $G$-invariant function $\chi \in C(\check{X})$ with $\int_{\check{X}} \chi \dvol =1$,
\item a $G$-invariant function $\mathcal{V} \in L^{\infty}(\check{X}) \otimes \End(V)$ such that if $\Delta^{\check{X}}$ denotes the Laplacian on $L^2(\check{X}, \chi \dvol) \otimes V$ and $\vert D^X \vert$ denotes the operator $\sqrt{\Delta^{\check{X}} + \mathcal{V}}$ acting on the $G$-invariant subspace $(L^2(\check{X}, \chi \dvol) \otimes V)^G$ then for all $k \in \bbN$,
\begin{align*}
\lim_{i \rightarrow \infty} \lambda_k (\vert D^{M_i} \vert ) = \lambda_k( \vert D^X \vert ).
\end{align*}
\end{enumerate}
\end{thm}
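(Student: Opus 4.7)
The plan is to reduce the spectral convergence for $\vert D^{M_i} \vert$ to Lott's prior convergence results for the Bochner Laplacian on sections of equivariant vector bundles \cite{LottLaplaceSingular, LottLaplaceSmooth}, using two ingredients: the frame bundle trick to desingularize the collapse, and the Weitzenb\"ock identity $D^2 = \nabla^{\ast}\nabla + \mathcal{R}$ to relate the Dirac operator to the rough Laplacian up to a zero-order curvature endomorphism.

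First I would lift the problem to the principal $G$-bundle $\pi_i: P_i \to M_i$ equipped with the natural Kaluza-Klein-type metric that makes $\pi_i$ a Riemannian submersion with totally geodesic fibers isometric to a fixed bi-invariant metric on $G$. This has three advantages: (i) the sequence $(P_i)_{i\in\bbN}$ still has uniformly bounded curvature and diameter; (ii) sections of the associated Clifford bundle $P_i \times_G V \to M_i$ correspond bijectively to $G$-invariant sections of the trivial bundle $P_i \times V \to P_i$, so $D^{M_i}$ lifts to a first-order elliptic operator on the $G$-invariant subspace of $L^2(P_i) \otimes V$; (iii) by the equivariant version of Cheeger-Fukaya-Gromov theory, after passing to a subsequence $(P_i)_{i\in\bbN}$ converges in the equivariant Gromov-Hausdorff topology to a smooth closed $G$-manifold $(\check{X}, g^{T\check{X}})$ with $C^{1,\alpha}$-regular metric, and the collapse of $P_i$ onto $\check{X}$ is realized by affine fiber bundles with infranilmanifold fibers whose volumes tend to zero while the ambient geometry remains uniformly controlled. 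The normalized fiberwise volume yields the positive $G$-invariant density $\chi$.

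Next I would use the Weitzenb\"ock identity to write $(D^{M_i})^2 = \nabla^{\ast} \nabla + \mathcal{R}_i$, where $\mathcal{R}_i$ is a uniformly $L^{\infty}$-bounded curvature endomorphism of $P_i \times_G V$. Lifting this to $G$-invariant sections on $P_i$ and applying the convergence theorem for the Bochner Laplacian on equivariant Clifford bundles from \cite{LottLaplaceSmooth, LottLaplaceSingular}, the eigenvalues of $\nabla^{\ast}\nabla$ restricted to the $G$-invariant subspace converge to those of $\Delta^{\check{X}}$ on $(L^2(\check{X}, \chi \dvol) \otimes V)^G$. After a further subsequence, $\mathcal{R}_i$ converges in a weak-$\ast$ $L^{\infty}$ sense to a $G$-invariant potential $\mathcal{V} \in L^{\infty}(\check{X}) \otimes \End(V)$, and the min-max principle transports spectral convergence from $(D^{M_i})^2$ to $\Delta^{\check{X}} + \mathcal{V}$. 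Since the square root is continuous on positive self-adjoint operators with discrete spectrum, this yields $\lambda_k(\vert D^{M_i} \vert) \to \lambda_k( \vert D^X \vert)$.

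The main obstacle I expect is the third step: showing that min-max values genuinely pass to the limit despite $\mathcal{R}_i$ only converging weakly. The Weitzenb\"ock remainder entangles the full curvature of $M_i$ with the submersion tensors $A_i$, $T_i$ and the intrinsic curvature of the infranil fibers (cf.\ Section \ref{Section GeometryAffineFiberBundles}), so one must argue that the oscillation of $\mathcal{R}_i$ along the collapsing directions averages out when paired with $G$-invariant test sections. The natural tool is the affine parallel structure from Theorem \ref{ThmInvariantMetrics} together with the uniform $C^{0,\alpha}$-estimates for $\nabla^{\mathcal{V}}$, $\mathcal{Z}$ and $\mathcal{A}$ established in Section \ref{Section GeometryAffineFiberBundles}; once this averaging is justified, Rellich compactness for the Bochner Laplacian on $\check{X}$ handles the rest.
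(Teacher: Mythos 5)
This statement is cited by the paper as \cite[Theorem 3]{LottDirac} and is not proved here at all; the present paper uses it as a black box and instead proves the refinement Theorem \ref{MainTheorem}, explicitly avoiding (as stated in Section \ref{SectionMainTheorem}) the Bochner-type formula and the minimax characterization. So there is no ``paper's own proof'' against which to compare line by line.

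That said, your sketch is a faithful reconstruction of Lott's original strategy as it is described in Section \ref{SectionMainTheorem}: lift to the orthonormal/spin frame bundle $P_i\to M_i$ with a Kaluza--Klein metric of unit fiber volume (Fukaya's desingularization, the same construction the paper borrows in Steps 1--2 of the proof of Theorem \ref{MainTheorem}); identify spinors with $G$-invariant $V$-valued functions on $P_i$; use the Bochner/Lichnerowicz identity $D^2=\nabla^{\ast}\nabla+\mathcal{R}$; apply the equivariant Laplacian convergence machinery from \cite{FukayaLaplace,LottLaplaceSmooth,LottLaplaceSingular}; recover the measure $\chi$ from the normalized fiber volume; and transfer via the square root. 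You also correctly locate the delicate point, namely that the zero-order curvature term $\mathcal{R}_i$ need not converge strongly and that one must justify that its oscillations along the collapsing directions wash out on the $G$-invariant subspace. Two small inaccuracies worth flagging: (i) the references \cite{LottLaplaceSmooth,LottLaplaceSingular} treat the $p$-form Laplacian, whereas the relevant Laplacian after lifting to $P_i$ is the scalar Laplacian on $V$-valued $G$-invariant functions, so the right primary reference for that step is Fukaya's \cite{FukayaLaplace} (the vertical Laplacian contributes a Casimir constant on each $G$-isotype, which must be absorbed into $\mathcal{V}$); and (ii) ``weak-$\ast$ $L^\infty$'' convergence of $\mathcal{R}_i$ combined with min-max is not by itself sufficient for eigenvalue convergence, so Lott's actual argument needs, and uses, the uniform control on the $A$-, $T$- and fiber-curvature tensors (as in Section \ref{Section GeometryAffineFiberBundles} here) to make the averaging over fibers effective rather than merely formal. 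These are fixable; the overall architecture of your proof matches Lott's.
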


The limit measure $\chi$ is also necessary for the analogous results regarding the behavior of the eigenvalues of the Laplacian on functions \cite{FukayaLaplace} and the eigenvalues of the Laplacian on forms \cite{LottLaplaceSmooth, LottLaplaceSingular}, see Example \ref{ExampleEigenvalue}. 

For the special case of collapsing sequences in $\mfdspace$ with a smooth limit space Lott proved an accentuation of the above theorem \cite[Theorem 2]{LottDirac}. In the following we say that for a given $\eps > 0$ two collections of real numbers $(a_i)_{i \in I}$ and $(b_j)_{j \in J}$ are \textit{$\eps$-close} if there is a bijection $\alpha: I \rightarrow J$ such that $\vert b_{\alpha(i)} - a_i \vert \leq \eps$ holds for all $i \in I$.

\begin{thm}[{\cite[Theorem 2]{LottDirac}}]\label{LottSmooth}
Let $B$ be a fixed smooth connected closed Riemannian manifold and let $n \in \bbN$,  $G \in \lbrace \SO(n), \Spin(n) \rbrace$ and  $V$ be a $G$-Clifford module. For any $\eps > 0$ and $K > 0$, there are positive constants $A(B,n,V,\eps,K), A^{\prime}(B,n,V,\eps,K)$ and $C(B,n,V,\eps,K)$ such that the following holds. Let $M$ be an $n$-dimensional connected closed oriented Riemannian manifold with a $G$-structure such that $\Vert R^M \Vert_{\infty} \leq K$ and $\dGH(M,B) \leq A^{\prime}$. Then there are a Clifford module $E^B$ on $B$ and a certain first order differential operator $\mathcal{D}^B$ on $C^{\infty}(B; E^B)$ such that
\begin{enumerate}
\item $\lbrace \arsinh (\frac{\lambda}{\sqrt{2K}} : \lambda \in \sigma(D^M), \ \lambda^2 \leq A \dGH(M,B)^{-2} - C\rbrace$ is $\eps$-close to a subset of $\lbrace \arsinh (\frac{\lambda}{\sqrt{2K}} : \lambda \in \sigma (\mathcal{D}^B) \rbrace$,
\item $\lbrace \arsinh (\frac{\lambda}{\sqrt{2K}} : \lambda \in \sigma(\mathcal{D}^B), \ \lambda^2 \leq A \dGH(M,B)^{-2} - C\rbrace$ is $\eps$-close to a subset of $\lbrace \arsinh (\frac{\lambda}{\sqrt{2K}} : \lambda \in \sigma (D^M) \rbrace$.
\end{enumerate}
\end{thm}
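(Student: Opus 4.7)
The plan is to reduce to the Riemannian affine fiber bundle setting and exploit the decomposition $L^2(\Sigma M) = \mathcal{S}^{\aff} \oplus (\mathcal{S}^{\aff})^{\perp}$ introduced in Section \ref{SectionAffineParallelSpinors}. By Theorem \ref{ThmInvariantMetrics} and Lemma \ref{SubmersionBounds}, given $\eps > 0$, there exists $A'(B,n,V,\eps,K)$ such that if $\dGH(M,B) \leq A'$, then we may replace $g$ by a $C^1$-close metric $\tilde g$ so that $f:(M,\tilde g)\to (B,\tilde h)$ is a Riemannian affine fiber bundle whose structural tensors $A$, $T$ and the connection $\nabla^{\mathcal{V}}$ all satisfy uniform bounds depending only on $B$, $n$, $K$. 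Since Dirac eigenvalues depend continuously on the metric in the $C^1$-topology \cite[Main Theorem 2]{Nowaczyk}, it suffices to compare $D^{(M,\tilde g)}$ with $\mathcal{D}^B$.

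On $\mathcal{S}^{\aff}$, Corollary \ref{QDirac} gives the explicit identification
\begin{equation*}
Q^{-1}\circ D^M\circ Q = \check{D}^{\mathcal{T}} + \tfrac{1}{2}\gamma(\mathcal{Z}) + \tfrac{1}{2}\gamma(\mathcal{A}),
\end{equation*}
where the right-hand side is a first-order self-adjoint elliptic operator on the Clifford module $E^B \coloneqq \SpinB\otimes\mathcal{P}$ over $B$. I would define $\mathcal{D}^B$ to be precisely this operator, so that on $\mathcal{S}^{\aff}$ the Dirac spectrum matches $\sigma(\mathcal{D}^B)$ exactly (up to the $C^1$-error from the metric approximation). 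By Lemmas \ref{BoundConnV}, \ref{BoundFiberCon}, and \ref{BoundATensor}, all the data entering $\mathcal{D}^B$ are controlled uniformly in $C^{0,\alpha}$, giving well-defined spectral asymptotics as $\dGH(M,B) \to 0$.

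The heart of the argument is the control of $(\mathcal{S}^{\aff})^{\perp}$: one must show that every eigenvalue of $D^M$ with eigenspinor having a nontrivial component in $(\mathcal{S}^{\aff})^{\perp}$ satisfies $\lambda^2 \geq A \dGH(M,B)^{-2} - C$. For this I would apply the Lichnerowicz--Bochner formula fiberwise, decomposing
\begin{equation*}
(D^M)^2 = (\nabla^M)^{\ast}\nabla^M + \tfrac{1}{4}\scal(M)
\end{equation*}
and then splitting $(\nabla^M)^{\ast}\nabla^M$ into its vertical and horizontal parts via Lemma \ref{ConnectionFormula}. The vertical Bochner Laplacian restricted to a fiber $Z_p$ coincides up to lower order with the connection Laplacian for $\nabla^{\aff}$, whose first nonzero eigenvalue on $(\Sigma Z_p)\ominus \mathcal{P}_p$ scales like $\vol(Z_p)^{-2/k}\sim \dGH(M,B)^{-2}$ (cf.\ the analogous statement for the $p$-form Laplacian in \cite{LottLaplaceSmooth}). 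The terms involving $A$, $T$, $R^M$ are $O(1)$ by Lemma \ref{SubmersionBounds}, so they contribute only to the constant $C$.

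The quantitative eigenvalue comparison then follows from a min-max argument: the $k$-th eigenvalue of $\vert D^M\vert$ below the cutoff $\sqrt{A\dGH(M,B)^{-2} - C}$ is realized on a subspace that is almost contained in $\mathcal{S}^{\aff}$, and on that subspace $D^M$ agrees with $Q\mathcal{D}^B Q^{-1}$ up to error $o(1)$ as $\dGH(M,B)\to 0$. Applying the arsinh rescaling converts the multiplicative error bound into the asserted $\eps$-closeness. The main technical obstacle is making the ``almost contained in $\mathcal{S}^{\aff}$'' quantitative with a uniform constant $A$: this requires a spectral gap estimate for the vertical Laplacian that is uniform across the collapsing sequence, which in turn is where the uniform bounds on $\nabla^{\mathcal{V}}$, $\mathcal{Z}$, and $\mathcal{A}$ from Section \ref{Section GeometryAffineFiberBundles} are essential.
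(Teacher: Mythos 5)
First, a structural point: Theorem \ref{LottSmooth} is quoted verbatim from Lott \cite{LottDirac} and the paper gives no proof of it at all --- it is imported as an external input (the paper's own contribution, Theorem \ref{MainTheorem}, identifies the limit operator on the spaces $\mathcal{S}_i$ and in fact relies on Lott's theorem to know that the spectrum coming from $\mathcal{S}_i^{\perp}$ escapes to $\pm\infty$). So your proposal cannot be measured against an internal argument; it has to stand on its own as a proof of Lott's statement, and as written it does not. Two gaps are genuine. The first is scope: the statement concerns $G \in \lbrace \SO(n), \Spin(n)\rbrace$ and an \emph{arbitrary} $G$-Clifford module $V$ (including the operator $\mathrm{d}+\mathrm{d}^{\ast}$ on forms), for a \emph{single} manifold $M$ with constants $A, A', C$ depending only on $(B,n,V,\eps,K)$. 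Your construction $E^B = \SpinB \otimes \mathcal{P}$, the isometry $Q$ of Lemma \ref{QIsometry} and the identity of Corollary \ref{QDirac} are specific to the spinor module; for general $V$ the volume normalization and the mean-curvature term genuinely enter (this is exactly the point of Example \ref{ExampleEigenvalue}), so ``define $\mathcal{D}^B$ to be the operator of Corollary \ref{QDirac}'' is not available outside the spin case. Moreover Theorem \ref{ThmInvariantMetrics} and the compact-embedding/subsequence arguments you invoke are asymptotic statements along a sequence, whereas the theorem requires effective constants for one manifold with $\dGH(M,B)\leq A'$.

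The second and more serious gap is the step you yourself flag as ``the main technical obstacle'': the uniform lower bound $\lambda^2 \geq A\,\dGH(M,B)^{-2} - C$ for the spectrum attached to $(\mathcal{S}^{\aff})^{\perp}$. Nothing in the present paper supplies this, and it is precisely the analytic core of Lott's proof; asserting it ``by analogy with the $p$-form Laplacian in \cite{LottLaplaceSmooth}'' is circular in this context. Your quantitative heuristic for it is also incorrect: the first nonzero eigenvalue of the fiberwise operator does \emph{not} scale like $\vol(Z_p)^{-2/k}$. For inhomogeneous collapse --- a torus with side lengths $\eps$ and $\eps^2$, or the Heisenberg fibers of Example \ref{ExampleHeisenberg} with weights $i^{-2}, i^{-2}, i^{-4}$ --- one has $\vol(Z_p)^{-2/k} \gg \diam(Z_p)^{-2} \sim \dGH(M,B)^{-2}$, while the actual gap is only of order $\diam(Z_p)^{-2}$; so the claimed comparability $\vol(Z_p)^{-2/k}\sim \dGH(M,B)^{-2}$ fails, and the correct estimate must be proved on the orthogonal complement of the affine-parallel sections over an infranilmanifold of bounded geometry, using the nilpotent structure (and then transferred from $\nabla^{\aff}$ to $\nabla^Z$, which the bounded tensor $\mathcal{Z}$ permits but which you do not carry out). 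Likewise the min--max step needs a quantitative version of ``eigenspinors below the cutoff are almost affine parallel'' with constants depending only on $(B,n,V,\eps,K)$, not an $o(1)$ statement along a collapsing sequence. Until those two estimates are established, the proposal assumes the substance of the theorem it sets out to prove.
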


The main result, Theorem \ref{ThmIntroduction}, deals with the special case of collapsing sequences $(M_i, g_i)_{i \in \bbN}$ of spin manifolds in $\mfdspace$ converging to a Riemannian manifold $(B,h)$. There we will give an explicit description of the limit operator $\mathcal{D}^B$ as a twisted Dirac operator with a symmetric $C^{0,\alpha}$-potential. Moreover, we show that the limit operator $\mathcal{D}^B$ satisfies the conclusions of Theorem \ref{LottSingular} with $\chi \equiv 1$. This is a special behavior for the eigenvalues of the Dirac operator on spin manifolds and does not extend to the eigenvalues of the Dirac operator acting on differential forms. 

\begin{ex}\label{ExampleEigenvalue}
Consider the torus $\bbT^2 = \lbrace ( e^{is}, e^{it}) : s,t \in \bbR \rbrace$ with the Riemannian metric
\begin{align*}
g_{\eps} \coloneqq \mathrm{d}s^2 +
\eps^2 c(s)^2 \mathrm{d}t^2,
\end{align*}
for a fixed positive function $c: S^1 \rightarrow \bbR_+$. The sequence $ (\bbT^2, g_{\eps} )_{\eps > 0}$ is a collapsing sequence with bounded sectional curvature and diameter converging to $(S^1, \mathrm{d}s^2 )$ in the Gromov-Hausdorff topology as $\eps \rightarrow 0$. We observe that the integrability tensor $A_{\eps}$ vanishes identically for all $\eps$ and the $T$-tensor is characterized by $\frac{c^{\prime}(s)}{c(s)}$ independent of $\eps$. In particular, the $T$-tensor is nontrivial for all $\eps$ if we choose the function $c$ to be non constant. 

We consider  the Dirac operator acting on differential forms. In that case the space of affine parallel forms is given by
\begin{align*}
\mathcal{S}^{\aff} \coloneqq \lbrace f \in C^{\infty}(T^2) : \frac{\partial}{\partial t} f = 0 \rbrace \cup \lbrace \alpha \, \mathrm{d}s \in \Omega^1(T^2) : \frac{\partial}{\partial t} \alpha = 0 \rbrace.
\end{align*}
The Dirac operator $D_{\eps} = \mathrm{d} + \mathrm{d}^{\ast}$ acts on $(f + \alpha \mathrm{d}s) \in \mathcal{S}^{\aff}$ as 
\begin{align*}
D_{\eps}(f(s) + \alpha(s) \mathrm{d}s) = \frac{\partial}{\partial s} f(s) \mathrm{d}s - c(s)^{-1} \frac{\partial}{\partial s}(c(s) \alpha(s) ). 
\end{align*}
We observe that $\left(D_{\eps}\right)_{\vert \mathcal{S}^{\aff}}$ is independent of $\eps$. In particular, in the limit $\eps \rightarrow 0$, the sequence $\left(D_{\eps}\right)_{\vert \mathcal{S}^{\aff}}$ induces a first order differential operator $D_0$ on $\Omega^{\ast}(S^1)$. For any eigenform $f(s) + \alpha(s) \mathrm{d}s  \in \Omega^1(S^1)$ of $D_0$ with eigenvalue $\lambda$ we have that
\begin{align*}
\lambda f(s) &= - c(s)^{-1} \frac{\partial}{\partial s}(c(s) \alpha(s) ), \\
\lambda \alpha(s) \mathrm{d}s &=  \frac{\partial}{\partial s} f(s) \mathrm{d}s.
\end{align*}


For example, if $c(s) = e^{\cos(s)}$ then one can calculate numerically, adapting the algorithm from \cite[p.\ 3 - 6]{Strohmaier}, that, without counting multiplicities, the first eigenvalues are approximately given by
\begin{align*}
\lambda_0 = 0, \; \lambda_1 \approx 0,990 , \; \lambda_2 \approx 1,137.
\end{align*}
In particular, the spectrum of $D_0$ does not coincide with the spectrum of the Dirac operator $D^{S^1}$ as $
\sigma(D^{S^1}) = \bbZ$.
\end{ex}

Before we prove Theorem \ref{ThmIntroduction} we need to fix some notations. Let $(M_i,g_i)_{i \in \bbN}$  be a collapsing sequence of Riemannian spin manifolds in $\mfdspacenk$ converging to an $n$-dimensional Riemannian manifold $(B,h)$. By Theorem \ref{ThmInvariantMetrics} there is an $I \in \bbN$ such that for any $i \geq I$ there are metrics $\tilde{g}_i$ on $M_i$ and $\tilde{h}_i$ on $B$ such that $f_i: (M_i, \tilde{g}_i) \rightarrow (B,\tilde{h}_i)$ is a Riemannian affine fiber bundle. Moreover,
\begin{align*}
\lim_{i \rightarrow \infty} \Vert \tilde{g}_i - g_i \Vert_{C^1} &= 0, \\ 
\lim_{i \rightarrow \infty} \Vert \tilde{h}_i - h \Vert_{C^1} &= 0.
\end{align*}
Let $\Sigma M_i$ and $\widetilde{\Sigma M}_i$ be the spinor bundles of $(M_i,g_i)$ and $(M_i, \tilde{g}_i)$ respectively. There is an explicit isometry \cite[Section 2.2]{Maier}.
\begin{align}\label{SpinBundleIsometry}
\Theta_i: L^2(\Sigma M_i) \rightarrow L^2(\widetilde{\Sigma M}_i ).
\end{align}

For a Riemannian affine fiber bundle $f_i: (M_i, \tilde{g}_i) \rightarrow (B,\tilde{h}_i)$  the space of affine parallel spinors $\mathcal{S}^{\aff}_i \subset L^2(\widetilde{\Sigma M}_i)$ is well-defined. The Dirac operator $\tilde{D}^{M_i}$ on $(M_i, \tilde{g}_i)$ acts diagonally with respect to the splitting
\begin{align*}
L^2(\widetilde{\Sigma M}_i) = \mathcal{S}_i^{\aff} \oplus \left( \mathcal{S}^{\aff}_i \right)^{\perp}
\end{align*}
since it commutes with the affine connection $\nabla^{\aff}$. In general, for the original fibration $f_i: (M_i, g_i) \rightarrow (B,h)$ the induced metrics on the fibers is not affine parallel. Thus, the affine connection $\nabla^{\aff}$ does not induce a well-defined connection on the spinor bundle $\Sigma M_i$. Instead we use the isometry $\Theta_i$ to define
\begin{align*}
\mathcal{S}_i \coloneqq \Theta_i^{-1}(\mathcal{S}_i^{\aff}).
\end{align*}
This induces the splitting
\begin{align*}
L^2(\Sigma M_i) = \mathcal{S}_i \oplus \mathcal{S}_i^{\perp}.
\end{align*}
But in contrast to Riemannian affine fiber bundles, the Dirac operator $D^{M_i}$ on $(M_i, g_i)$, in general, does not act diagonally with respect to this splitting. Nevertheless, it follows from the continuity of the spectra of Dirac operators \cite[Main Theorem 2]{Nowaczyk} that  the spectra of the restrictions of $D^{M_i}$ and $\tilde{D}^{M_i}$ to $\mathcal{S}_i$ and $\mathcal{S}_i^{\aff}$, respectively to their orthogonal complements, have the same limit as $i \rightarrow \infty$. 

%

Similar to \eqref{SpinBundleIsometry} there is also an isometry
\begin{align*}
\theta_i: L^2(TM_i) \rightarrow L^2(\widetilde{TM}_i).
\end{align*}
In the next theorem we interpret the operators $\nabla^{\mathcal{V}_i}$, $\mathcal{Z}_i$ and $\mathcal{A}_i$ on $(M_i, g_i)$ as the pullbacks of the respective operators on $(M_i, \tilde{g}_i)$, introduced in Section \ref{SectionOperatorsRiemannianAffine},  via the map $\theta_i$.

Using this terminology we state the explicit description of the limit operator $\mathcal{D}^B$ in Theorem \ref{LottSmooth} for collapsing sequences of spin manifolds converging to a Riemannian manifold of lower dimension in the following theorem from which Theorem \ref{ThmIntroduction} follows immediately.

\begin{thm}\label{MainTheorem}
Let $(M_i,g_i)_{i \in \bbN}$ be a sequence of spin manifolds in~$\mfdspacenk$ converging to a smooth $n$-dimensional Riemannian manifold $(B,h)$ such that the space $\mathcal{S}_i$ is nontrivial  for almost all $i \in \bbN$. Then there is a subsequence $(M_i,g_i)_{i \in \bbN}$ such that the spectrum of $D^{M_i}_{\vert \mathcal{S}_i}$ converges to the spectrum of the elliptic self-adjoint first order differential operator
\begin{align*}
\mathcal{D}^B: \dom(\mathcal{D}^B) &\rightarrow L^2(\SpinB \otimes \mathcal{P} ), \\
\Phi &\mapsto \check{D}^{\mathcal{T}_{\infty}} \Phi + \frac{1}{2} \gamma(\check{\mathcal{Z}}_{\infty}) \Phi + \frac{1}{2}\gamma(\check{\mathcal{A}}_{\infty}) \Phi,
\end{align*}
where
\begin{align*}
\SpinB \coloneqq
\begin{cases}
\Sigma B, & \text{if $n$ or $k$ is even,}\\
\Sigma^+ B \oplus \Sigma^- B, & \text{if $n$ and $k$ are odd.}
\end{cases}
\end{align*}
Further, 
\begin{enumerate}
\item $\mathcal{P}$ represents the space of affine parallel spinors of the fibers,

\item $\check{D}^{\mathcal{T}_{\infty}}$ is the twisted Dirac operator on  $\SpinB \otimes \mathcal{P}$ with respect to the twisted connection $\widecheck{\nabla}^{\mathcal{T}_{\infty}} = \nabla^h \otimes \nabla^{\mathcal{V}_{\infty}}$, where $\nabla^h$ is the spinorial connection on $(B,h)$ and $\nabla^{\mathcal{V}_{\infty}}$ is induced by the $C^{0,\alpha}$-limit of $\nabla^{\mathcal{V}_i}$ for any $\alpha \in [0,1)$,

\item $\check{\mathcal{Z}}_{\infty}$ is induced by the $C^{0,\alpha}$-limit of $(\mathcal{Z}_i)_{i \in \bbN}$ for any $\alpha \in [0,1)$,

\item $\check{\mathcal{A}}_{\infty}$ is the $C^{0,\alpha}$-limit of $(\mathcal{A}_i)_{i \in \bbN}$ for any $\alpha \in [0,1)$.
\end{enumerate}
\end{thm}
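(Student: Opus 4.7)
The strategy is to combine the reduction to Riemannian affine fiber bundles from Theorem~\ref{ThmInvariantMetrics} with the algebraic decomposition of the Dirac operator from Corollary~\ref{QDirac}, and then extract a convergent subsequence using the uniform $C^1$-bounds of Section~\ref{Section GeometryAffineFiberBundles}.

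First, by Theorem~\ref{ThmInvariantMetrics} and Lemma~\ref{SubmersionBounds}, passing to a subsequence we may assume that $f_i:(M_i,\tilde{g}_i)\to(B,\tilde{h}_i)$ is a Riemannian affine fiber bundle with the tensors $A_i, T_i, R^{M_i}$ uniformly bounded. Continuity of Dirac spectra under $C^1$-convergence of metrics \cite[Main Theorem~2]{Nowaczyk}, which already underlies the definition of $\mathcal{S}_i$ from $\mathcal{S}_i^{\aff}$ via $\Theta_i$, reduces the problem to describing the limit of the spectrum of $\tilde{D}^{M_i}|_{\mathcal{S}_i^{\aff}}$. Lemma~\ref{QIsometry} and Corollary~\ref{QDirac} then transfer this spectrum to that of
\[
\mathcal{D}^{B,i} \coloneqq \check{D}^{\mathcal{T}_i} + \tfrac{1}{2}\gamma(\check{\mathcal{Z}}_i) + \tfrac{1}{2}\gamma(\check{\mathcal{A}}_i)
\]
acting on $L^2(\SpinB \otimes \mathcal{P}_i)$ over the fixed base $B$.

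Next, I stabilize the underlying bundles. By the finiteness (up to affine diffeomorphism) of infranilmanifolds of bounded geometry, a further subsequence can be chosen so that all fibers $Z_i$ are affinely diffeomorphic to one fixed infranilmanifold $Z$; then the associated $\Aff(Z)$-bundles of $f_i$ and hence the bundles $\mathcal{P}_i$ may be identified with one fixed model bundle $\mathcal{P}\to B$. The uniform $C^1(B)$-bounds of Lemmas~\ref{BoundConnV}, \ref{BoundFiberCon}, \ref{BoundATensor}, combined with the compact embedding $C^1\hookrightarrow C^{0,\alpha}$, produce $C^{0,\alpha}$-convergent subsequences with limits $\nabla^{\mathcal{V}_{\infty}}$, $\mathcal{Z}_{\infty}$, $\mathcal{A}_{\infty}$. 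These assemble into the operator $\mathcal{D}^B = \check{D}^{\mathcal{T}_{\infty}} + \tfrac{1}{2}\gamma(\check{\mathcal{Z}}_{\infty}) + \tfrac{1}{2}\gamma(\check{\mathcal{A}}_{\infty})$ of the statement; ellipticity is inherited from $\check{D}^{\mathcal{T}_{\infty}}$, and self-adjointness follows because the two zeroth-order potentials are odd-degree Clifford elements, hence symmetric.

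Finally, for spectral convergence I argue that $\mathcal{D}^{B,i} - \mathcal{D}^B$ is a bundle endomorphism of $\SpinB\otimes\mathcal{P}$ whose $C^0$-norm tends to zero. The principal symbol of $\check{D}^{\mathcal{T}_i}$ is determined by the fixed Clifford action on $B$, so the first-order part of $\check{D}^{\mathcal{T}_i}-\check{D}^{\mathcal{T}_{\infty}}$ reduces to Clifford multiplication by the $1$-form $\nabla^{\mathcal{V}_i}-\nabla^{\mathcal{V}_{\infty}}$ with values in $\End(\mathcal{P})$, which is $C^{0,\alpha}$-small; the potential differences $\tfrac{1}{2}\gamma(\check{\mathcal{Z}}_i-\check{\mathcal{Z}}_{\infty})$ and $\tfrac{1}{2}\gamma(\check{\mathcal{A}}_i-\check{\mathcal{A}}_{\infty})$ are $C^{0,\alpha}$-small endomorphisms by construction. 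Compactness of $B$ then yields $\|\mathcal{D}^{B,i}-\mathcal{D}^B\|_{L^2\to L^2}\to 0$, which in turn gives norm resolvent convergence of these self-adjoint elliptic operators and hence spectral convergence with multiplicities. The main obstacle is the stabilization step: simultaneously identifying all $\mathcal{P}_i$ with a fixed $\mathcal{P}$ in a way that makes the isometries $Q_i$ compatible with the convergence of $\nabla^{\mathcal{V}_i}$, $\mathcal{Z}_i$, and $\mathcal{A}_i$—this is precisely where the $\Aff(Z)$-structure group and the uniform estimates of Section~\ref{Section GeometryAffineFiberBundles} do the essential work.
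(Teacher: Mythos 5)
Your overall architecture (reduce to Riemannian affine fiber bundles via Theorem \ref{ThmInvariantMetrics} and Nowaczyk's $C^1$-continuity, transfer $\tilde{D}^{M_i}\vert_{\mathcal{S}_i^{\aff}}$ to $\mathfrak{D}_i = \check{D}^{\mathcal{T}_i} + \tfrac12\gamma(\check{\mathcal{Z}}_i)+\tfrac12\gamma(\check{\mathcal{A}}_i)$ via $Q_i$ and Corollary \ref{QDirac}, then use the $C^1$-bounds of Lemmas \ref{BoundConnV}, \ref{BoundFiberCon}, \ref{BoundATensor} and the compact embedding $C^1\hookrightarrow C^{0,\alpha}$) coincides with the paper's. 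The genuine gap is your stabilization step. You dispose of the fact that the $\mathfrak{D}_i$ live on different Hilbert spaces $L^2(\SpinBi\otimes\mathcal{P}_i)$ by asserting ``finiteness (up to affine diffeomorphism) of infranilmanifolds of bounded geometry'' and then ``identifying'' the $\Aff(Z)$-bundles and the $\mathcal{P}_i$ with one fixed model. Neither assertion is justified, and the first is false as stated: the fibers have diameter tending to zero, and infinitely many diffeomorphism types of $k$-dimensional infranilmanifolds admit metrics with $\vert\sec\vert\leq 1$ and arbitrarily small diameter (e.g.\ $3$-dimensional nilmanifolds of arbitrary Euler class), so along a single sequence in $\mfdspacenk$ the fiber type need not stabilize on any subsequence. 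Even when the fiber $Z$ is fixed, the bundles $f_i: M_i\rightarrow B$ are in general pairwise non-isomorphic (Example \ref{ExampleNonVanishingA}: the lens space bundles $\faktor{M}{\bbZ_i}\rightarrow B$), so ``hence the bundles $\mathcal{P}_i$ may be identified with one fixed $\mathcal{P}$'' is exactly the point that requires proof, together with compatibility of these identifications with the convergence of $\nabla^{\mathcal{V}_i}$, $\mathcal{Z}_i$, $\mathcal{A}_i$. The paper resolves this not by any finiteness of fiber types but by lifting to the $\Spin(n+k)$-principal bundles $P_i$, applying the equivariant versions of Gromov compactness and Fukaya's fibration theorem to obtain a \emph{fixed} limit space $\tilde{B}$ with isometric $\Spin(n+k)$-action, realizing every $\mathcal{S}_i^{\aff}$ isometrically as $\bigl(L^2(\tilde{B},h_i^P)\otimes\Sigma_{n+k}\bigr)^{\Spin(n+k)} \cong L^2(\SpinBi\otimes\mathcal{P})$ with an $i$-independent $\mathcal{P}$, and finally removing the residual $i$-dependence of $\SpinBi$ with (a local version of) Maier's isometry $\Theta_i$. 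Without some substitute for these Steps 1--3 your argument does not produce a common space on which the $\mathfrak{D}_i$ can be compared.

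A secondary inaccuracy: after the identifications, $\mathcal{D}_i-\mathcal{D}^B$ is \emph{not} a bundle endomorphism. The twist difference $\nabla^{\mathcal{V}_i}-\nabla^{\mathcal{V}_\infty}$ indeed contributes a zeroth-order term, but $\check{D}^{\mathcal{T}_i}$ is built from the spinorial connection and Clifford multiplication of $(B,\tilde{h}_i)$, and after transporting with $\Theta_i$ there remains a first-order discrepancy with small coefficients coming from $\Vert\tilde{h}_i-h\Vert_{C^1}\rightarrow 0$. Consequently one does not get $\Vert\mathcal{D}_i-\mathcal{D}^B\Vert_{L^2\rightarrow L^2}\rightarrow 0$ directly; the paper instead proves convergence in $B\bigl(H^{1,2}(\SpinB\otimes\mathcal{P}),L^2(\SpinB\otimes\mathcal{P})\bigr)$ and invokes the spectral-convergence results of \cite[Section 4]{Nowaczyk}. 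This part of your argument is repairable, but as written the norm-resolvent claim does not follow from what you establish.
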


\begin{proof}

%
%

By the discussion above, we can assume without loss of generality that $( f_i: (M_i, g_i) \rightarrow (B,h_i) )_{i \in \bbN}$ is a collapsing sequence of Riemannian affine fiber bundles converging to $(B,h)$.

In this setting we can use the isometry $Q_i: L^2(\SpinBi \otimes \mathcal{P}) \rightarrow \mathcal{S}^{\aff}_i$, see Lemma \ref{QIsometry}, and apply Corollary \ref{QDirac} to write
\begin{align*}
D^{M_i} = Q_i \circ \check{D}^{\mathcal{T}_i} \circ Q_i^{-1} + \frac{1}{2} \gamma(\mathcal{Z}_i) + \frac{1}{2} \gamma(\mathcal{A}_i).
\end{align*}
From the discussion in Section \ref{SectionOperatorsRiemannianAffine} it follows that $\gamma(\mathcal{Z}_i)$ and $\gamma(\mathcal{A}_i)$ act diagonally with respect to the splitting $L^2(\Sigma M_i) = \tilde{\mathcal{S}}_i^{\aff} \oplus  \left( \tilde{\mathcal{S}}_i^{\aff} \right)^{\perp}$. Thus, there are well-defined operators $\check{\mathcal{Z}}_i$ and $\check{\mathcal{A}}_i$ such that
\begin{align*}
\left. D^{M_i} \right._{\vert\tilde{\mathcal{S}}^{\aff} } &= Q_i \circ \left( \check{D}^{\mathcal{T}_i} + \frac{1}{2} \gamma(\check{\mathcal{Z}}_i) + \frac{1}{2} \gamma(\check{\mathcal{A}}_i) \right) \circ Q_i^{-1} \\
& \eqqcolon Q_i \circ \mathfrak{D}_i \circ Q_i^{-1}.
\end{align*}
Since $Q_i: L^2(\SpinBi \otimes \mathcal{P}_i) \rightarrow \tilde{\mathcal{S}}^{\aff}$ is an isometry, it follows that $D^{M_i}_{\vert\tilde{\mathcal{S}}^{\aff} }$ is isospectral to $\mathfrak{D}_i$. Furthermore, the operator $\mathfrak{D}_i$ is densely defined on $H^{1,2}(\SpinBi \otimes \mathcal{P}_i)$. 

Now, we are going to choose a subsequence such that the spectrum $\sigma( \mathfrak{D}_i)$ converges to the spectrum of the claimed operator $\mathcal{D}^B$. The main problem here is that the operators $\mathfrak{D}_i$ are defined on different spaces. Thus, we need to find a common space on which we can study the behavior of the spectrum of the sequence $(\mathfrak{D}_i)_{i \in \bbN}$. This is done in the next three steps. 

By abuse of notation, we use the same index $i$ for any subsequence we choose. The following identifications are based on the constructions in \cite[Section 4]{LottDirac}. The idea here is similar to Fukaya's main idea in \cite{FukayaBoundary}. Namely, we will consider the corresponding sequence of $\Spin(n+k)$-principal bundles and identify the spinors on $(M_i, g_i)$ with $\Spin(n+k)$-invariant functions on the corresponding $\Spin(n+k)$-principal bundle $P_{\Spin}(M_i, \tilde{g}_i)$.
\newline

\textit{Step 1: Identification of the spinors.} Let $P_i \coloneqq P_{\Spin}(M_i, g_i)$ be the $\Spin(n+k)$-principal bundle of $(M_i, g_i)$. Further, we set $g_i^P$ to be a Riemannian metric on $P_i$ such that
\begin{align*}
\pi_i: (P_i, g_i^P) \rightarrow (M_i, g_i)
\end{align*}
is a Riemannian submersion with totally geodesic fibers and $\vol(\pi_i^{-1}(x) )= 1$ for all $x \in M_i$.

The isometric $\Spin(n+k)$ action on $P_i$ together with the canonical complex spinor representation $\theta_{n+k}: \Spin(n+k) \rightarrow \Sigma_{n+k}$ induces an isometric $\Spin(n+k)$ action on the tensor product $L^2(P_i, g_i^P) \otimes \Sigma_{n+k}$.  Let $\left( L^2(P_i, g_i^P) \otimes \Sigma_{n+k} \right)^{\Spin(n+k)}$ be the subspace that is invariant under this $\Spin(n+k)$ action.  

Since the spinor bundle $\Sigma M_i$ of $(M_i, g_i)$ is defined as $\Sigma M_i = P_i \times_{\theta_{n+k}} \Sigma_{n+k}$ it follows at once that there is a canonical isomorphism
\begin{align}\label{FirstIsometry}
\Pi_i: \left( L^2(P_i, g_i^P) \otimes \Sigma_{n+k} \right)^{\Spin(n+k)} \rightarrow L^2(\Sigma M_i ).
\end{align}


\medskip

\textit{Step 2: Convergence of the $\Spin(n+k)$-principal bundles.} Next we consider the sequence $(P_i, g_i^P )_{i \in \bbN}$. By construction, the sectional curvatures and the diameter of this sequence are uniformly bounded in $i$. Thus, we can apply the $G$-equivariant version of Gromov's compactness theorem, \cite[Lemma 1.11 and Lemma 1.13]{FukayaBoundary} to the sequence $(P_i, g_i^P )_{i \in \bbN}$. It follows that there is a subsequence, which we denote again by $(P_i, g_i^P )_{i \in \bbN}$, that converges to a compact metric space $(\tilde{B}, h^P)$ on which $\Spin(n+k)$ acts as isometries. In particular, $\faktor{(\tilde{B}, h^P)}{\Spin(n+k)}$ is isometric to the limit space $(B, h)$ of the sequence $(M_i, g_i)_{i \in \bbN}$. Using the same strategy as in \cite[Theorem 6.1]{FukayaBoundary} it follows that $(\tilde{B}, h^P)$ is a Riemannian manifold with a $C^{1, \alpha}$-metric.

As Fukaya's fibration theorem also holds in a $G$-equivariant setting \cite[Theorem 9.1]{FukayaBoundary}, it follows that there is a further subsequence $(P_i, g_i^P)_{i \in \bbN}$ such that for all $i \in \bbN$ there is a $\Spin(n+k)$-equivariant fibration $\tilde{f}_i: P_i \rightarrow \tilde{B}$ with infranil fibers and affine structure group. Since for every $i \in \bbN$, the metric $g_i$ on $M_i$ is invariant, it follows that $g_i^P$ is also an invariant metric, i.e.\ there is a $\Spin(n+k)$-invariant metric $h_i^P$ on $\tilde{B}$ such that 
\begin{align*}
\tilde{f}_i: (P_i, g_i^P) \rightarrow (\tilde{B}, h_i^P)
\end{align*}
is a $\Spin(n+k)$-equivariant Riemannian affine fiber bundle. Moreover, the following diagram commutes for every $i \in \bbN$,
\begin{equation*}
\begin{tikzcd}
(P_i, g_i^P) \arrow[d] \arrow[r , "\tilde{f}_i"] & (\tilde{B}, h_i^P) \arrow[d] \\
(M_i, g_i) \arrow[r, "f_i"] & (B, h_i) \rlap{ .}
\end{tikzcd}
\end{equation*}

\medskip

\textit{Step 3: The space of affine parallel spinors.} We fix an $i \in \bbN$ and recall the affine connection $\nabla^{\aff}$ on $(M_i,h_i)$ that is induced by the affine connection on the infranil fiber $Z_i$ of the Riemannian affine fiber bundle $f_i: (M_i, g_i) \rightarrow (B, h_i)$. Since for all $p \in B$ the induced metric $\hat{g}_p$ on the fiber $Z_p = f^{-1}_i(p)$ is affine parallel, the connection $\nabla^{\aff}$ induces an affine connection, on $P_i$. Hence, there is a well-defined subspace $L^2(P_i, g_i^P)^{\aff} \subset L^2(P_i, g_i^P)$ consisting of affine parallel functions. As we already know that the space $\left( L^2(P_i, g_i^P) \otimes \Sigma_{n+k}\right)^{\Spin(n+k)}$ is isometric to $L^2(\Sigma M_i)$, see \eqref{FirstIsometry},  it follows that there is an induced isometry
\begin{align}\label{FirstAffineIsometry}
\widetilde{\Pi}_i: \left( L^2(P_i, g_i^P)^{\aff} \otimes \Sigma_{n+k} \right)^{\Spin(n+k)} \rightarrow \mathcal{S}_i^{\aff}.
\end{align}
Here $\mathcal{S}_i^{\aff}$ denotes as usual the space of affine parallel spinors.

Next, we observe that we can view $L^2(P_i, g_i^P)^{\aff}$ also as the space of functions in $L^2(P_i, g_i^P)$ that are constant along the fibers of the fibration $\tilde{f}_i: \tilde{P}_i \rightarrow \tilde{B}$. In particular, for any $s \in L^2(P_i, g_i^P)^{\aff}$ there is an $\check{s} \in L^2(\tilde{B}, h_i^P)$ such that $\tilde{f}_i^{\ast} \check{s} = s$. Hence, we have the isometry
\begin{align*}
L^2(\tilde{B}, \tilde{h}_i^P) &\rightarrow L^2(P_i, g_i^P )^{\aff}, \\
\check{s} &\mapsto f_i^{\ast}(v_i^{-\frac{1}{2}} \check{s}),
\end{align*}
where $v_i(p) =  \vol(\tilde{f}_i^{-1}(p))$ for all $p \in \tilde{B}$. Combined with \eqref{FirstAffineIsometry} we obtain the isometry
\begin{align*}
\tilde{Q}_i: \left(L^2(\tilde{B}, h_i^P) \otimes \Sigma_{n+k} \right)^{\Spin(n+k)} \rightarrow \mathcal{S}_i^{\aff},
\end{align*}
such that the following diagram commutes
\begin{equation*}
\begin{tikzcd}
\left(L^2(\tilde{B}, h_i^P) \otimes \Sigma_{n+k} \right)^{\Spin(n+k)} \arrow[d, "\rho_i"'] \arrow[rd, "\tilde{Q}_i"] &[+3 em] \\
L^2(\SpinBi \otimes \mathcal{P}_i)  \arrow[r, "Q_i"'] &\mathcal{S}_i^{\aff} \rlap{ .}
\end{tikzcd}
\end{equation*}
Here we used that there is an isometry $\rho_i$ similar to \eqref{FirstIsometry}.

We recall from Section \ref{SectionSpinSubmersion} that
\begin{align*}
\Sigma_{n+k} \cong \tensor[^{\diamond}]{\Sigma}{_n} \otimes \Sigma_k,
\end{align*}
with
\begin{align*}
\tensor[^{\diamond}]{\Sigma}{_n} \coloneqq 
\begin{cases}
\Sigma_n, &\text{if $n$ or $k$ is even,} \\
\Sigma_n^+ \oplus \Sigma_n^-, &\text{if $n$ and $k$ are odd,}
\end{cases}
\end{align*}
where $\Sigma_n^+$ and $\Sigma_n^-$ are two isomorphic copies of $\Sigma_n$ (compare \eqref{IsoOneEven}, \eqref{IsoOddOdd}). Then
\begin{align*}
\left(L^2(\tilde{B}, h_i^P) \otimes \Sigma_{n+k} \right)^{\Spin(n+k)} &\cong \left(L^2(\tilde{B},h_i^P) \otimes \big(\tensor[^{\diamond}]{\Sigma}{_n} \otimes \Sigma_k  \big) \right)^{\Spin(n+k)} \\
&\cong L^2(\SpinBi \otimes \mathcal{P})
\end{align*}
for some fixed locally defined vector bundle $\mathcal{P}$ over $B$ independent of $i$. In particular, there are isomorphisms $\mathcal{P}_i \rightarrow \mathcal{P}$ for all $i \in \bbN$ and therefore also isometries
\begin{align}\label{NewQiIsometry}
Q_i: L^2(\SpinBi \otimes \mathcal{P}) \rightarrow \mathcal{S}_i^{\aff}.
\end{align}

Now, $\SpinBi$ is the only object left that depends on $i$. To remove also this $i$-dependency, let us assume for the moment that $B$ is a spin manifold.  For any $i \in \bbN$ we consider the isometry
\begin{align*}
\hat{\beta}_{h}^{h_i}: L^2(\Sigma B) \rightarrow L^2(\Sigma_i B),
\end{align*}
that was constructed in \cite[Section 2.2]{Maier}.  Here $\Sigma B$ is the spinor bundle of $(B, h)$ and $\Sigma_i B$ is the spinor bundle of $(B, h_i)$. Going back to the original case, where $B$ is not necessarily spin, we can still apply a local version of the isometry $\hat{\beta}_{h}^{h_i}$ to obtain an isometry
\begin{align*}
\Theta_i: L^2( \SpinB \otimes \mathcal{P}) \rightarrow L^2(\SpinBi \otimes \mathcal{P}).
\end{align*}

\medskip

\textit{Step 4: The convergence of the Dirac eigenvalues.} Similar to the beginning of the proof, the Dirac operator $D^{M_i}$ on $(M_i, g_i)$ restricted to $\mathcal{S}_i^{\aff}$ can be written as
\begin{align*}
\left. D^{M_i} \right._{\vert\tilde{\mathcal{S}}^{\aff} } &= Q_i \circ \left( \check{D}^{\mathcal{T}_i} + \frac{1}{2} \gamma(\check{\mathcal{Z}}_i) + \frac{1}{2} \gamma(\check{\mathcal{A}}_i) \right) \circ Q_i^{-1} \\
& =Q_i \circ \mathfrak{D}_i \circ Q_i^{-1},
\end{align*}
where $Q_i$ is now the isometry \eqref{NewQiIsometry}.

For any $i \in \bbN$ the operator
\begin{align*}
\mathcal{D}_i \coloneqq \Theta_i^{-1} \circ \mathfrak{D}_i \circ \Theta_i
\end{align*}
is isospectral to  $D^{M_i}_{\vert \mathcal{S}_i^{\aff}}$ and densely defined on $H^{1,2}( \SpinB \otimes \mathcal{P})$. By a small abuse of notation, we continue to write
\begin{align*}
\mathcal{D}_i = \check{D}^{\mathcal{T}_i} + \frac{1}{2} \gamma(\check{\mathcal{Z}}_i) + \frac{1}{2} \gamma(\check{\mathcal{A}}_i).
\end{align*}
First, we observe that the $C^1$-norms corresponding to $(B, \tilde{h}_i)$ are all equivalent to the $C^1$-norm on $(B,h)$ as $\lim_{i \in \bbN} \Vert \tilde{h}_i - h \Vert_{C^1} = 0$. By Lemma \ref{BoundFiberCon} it follows that the sequence of operators $\left( \gamma(\check{\mathcal{Z}}_i)\right)_{i \in \bbN}$ is uniformly bounded in $C^1(B,h)$. Further, we conclude from Lemma \ref{BoundATensor} that also the sequence $\left( \gamma(\check{\mathcal{A}}_i) \right)_{i \in \bbN}$ is uniformly bounded in $C^1(B,h)$. Since $C^1 \hookrightarrow C^{0,\alpha}$ is a compact embedding for all $\alpha \in [0,1)$ there is a subsequence such that $\left(\gamma(\check{\mathcal{Z}}_i)\right)_{i \in \bbN}$ and $\left( \gamma(\check{\mathcal{A}}_i) \right)_{i \in \bbN}$ converge to well-defined operators $\gamma(\check{\mathcal{Z}}_{\infty})$ and $\gamma(\check{\mathcal{A}}_{\infty})$ in $C^{0, \alpha}$ for any $\alpha \in [0, 1)$. 

As $\widecheck{\nabla}^{\mathcal{T}_i}$ corresponds to the twisted connection $\nabla^{ \tilde{h}_i} \otimes \widecheck{\nabla}^{\mathcal{V}_i}$ it follows from Lemma \ref{BoundConnV} that there is a further subsequence $(M_i, g_i)_{i \in \bbN}$ such that the corresponding sequence $(\mathcal{D}_i)_{i \in \bbN}$ is a sequence of operators that are densely defined on $H^{1,2}(\SpinB \otimes \mathcal{P})$. Furthermore, the sequence $(\mathcal{D}_i)_{i \in \bbN}$ converges in $B(H^{1,2}(\SpinB \otimes \mathcal{P}), L^2(\SpinB \otimes \mathcal{P}))$ to the claimed limit operator $\mathcal{D}^B$. Here, $B(.,.)$ is the space of bounded linear operators endowed with the operator norm. Thus  the spectra $(\sigma (\mathcal{D}_i))_{i \in \bbN}$ converge to $\sigma(\mathcal{D}^B)$ by the results of \cite[Section 4]{Nowaczyk}.
\end{proof}

\begin{rem}
In \cite[Theorem 5.1 and Theorem 5.2]{RoosDirac} we studied the behavior of Dirac eigenvalues of collapsing sequences of spin manifolds in $\mathcal{M}(n+1,d)$. Assuming the limit space to be smooth, \cite[Theorem 5.1 and Theorem 5.2]{RoosDirac} and Theorem \ref{MainTheorem} are compatible. The statement that the spaces $\mathcal{S}_i$ should be nontrivial for almost all $i \in \bbN$ corresponds to the case of projectable spin structures in \cite{RoosDirac}.  In that case we also obtained in the limit a twisted Dirac operator with a potential related to the $A$-tensor. Moreover, the twist was nontrivial if and only if $\nabla^{\mathcal{V}_i}$ where trivial for almost all $i \in \bbN$. The potential $\frac{1}{2} \gamma(\check{\mathcal{Z}}_{\infty})$ does not appear in the case $k=1$, as the fibers are all one-dimensional.
\end{rem}

As a conclusion of Theorem \ref{MainTheorem}  we can characterize the special case where the spectrum of the limit operator $\mathcal{D}^B$ coincides with the spectrum of the Dirac operator on the manifold $B$ up to multiplicity. We formulate the following corollary for collapsing sequences of Riemannian affine fiber bundles. Combining Theorem \ref{ThmInvariantMetrics} and the $C^1$-continuity of Dirac eigenvalues \cite[Main Theorem 2]{Nowaczyk} this corollary extends to any collapsing sequence in $\mfdspacenk$ with smooth $n$-dimensional limit space.

\begin{cor}\label{CorollaryExplicitDirac}
Let $\left(f_i: (M_i, g_i) \rightarrow (B,h_i) \right)_{i \in \bbN}$ be a collapsing sequence of Riemannian affine fiber bundles such that $(M_i, g_i)_{i \in \bbN}$ is a spin manifold in $\mfdspacenk$ and $B$ is a closed $n$-dimensional manifold. Further, we denote by $Z_i$ the closed $k$-dimensional infranilmanifold which is diffeomorphic to the fibers of $f_i: (M_i, g_i) \rightarrow (B,h_i)$. If
\begin{gather*}
\limsup_{i \rightarrow \infty} \Vert \Hol(\mathcal{V}_i, \nabla^{\mathcal{V}_i}) - \id \Vert_{\infty} = 0,\\
\limsup_{i \rightarrow \infty} \left( \sup_{p \in B} \Vert \scal(Z^i_p) \Vert_{\infty} \right) = 0,\\
\limsup_{i \rightarrow \infty} \Vert A_i \Vert_{\infty} = 0 ,
\end{gather*}
and $\mathcal{S}_i^{\aff}$ is nontrivial for almost all $i \in \bbN$, then there is a subsequence also denoted by $(M_i, g_i)_{i \in \bbN}$ such that the spin structure on $(M_i,g_i)$ induces the same spin structure on $B$ for all $i \in \bbN$ and such that the spectrum of the Dirac operator $D^{M_i}_{\vert \mathcal{S}_i^{\aff}}$ converges,  up to multiplicity, to the spectrum of $D^B$, if $n$ or $k$ is even, and to the spectrum of $D^B \oplus - D^B$, if $n$ and $k$ are odd. Each eigenvalue is counted $\rank(\mathcal{P})$-times.
\end{cor}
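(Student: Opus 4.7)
The plan is to specialize Theorem \ref{MainTheorem} under the three asymptotic vanishing hypotheses, and then carefully identify the resulting operator as (a sum of copies of) the Dirac operator on $B$, or the analogous sign-twisted variant in the odd-odd case.

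First I would check that each of the three ingredients of $\mathcal{D}^B = \check{D}^{\mathcal{T}_\infty} + \tfrac{1}{2}\gamma(\check{\mathcal{Z}}_\infty) + \tfrac{1}{2}\gamma(\check{\mathcal{A}}_\infty)$ simplifies in the limit. The assumption $\limsup \Vert A_i \Vert_\infty = 0$ gives $\check{\mathcal{A}}_\infty = 0$ at once, since $\Vert \mathcal{A}_i \Vert_\infty \leq \Vert A_i \Vert_\infty$. From the computation in the proof of Lemma \ref{BoundFiberCon} we have $\Vert \mathcal{Z}_i \Vert^2 = -3\,\scal(Z_i)$ pointwise, so the assumption $\limsup \sup_p \Vert \scal(Z^i_p) \Vert_\infty = 0$ forces $\check{\mathcal{Z}}_\infty = 0$. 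Finally, by Remark \ref{RemarkVertHol} the vanishing of the vertical holonomy in the limit means that $\nabla^{\mathcal{V}_\infty}$ is gauge equivalent to the trivial connection. Hence after an appropriate gauge transformation $\mathcal{D}^B$ reduces to $\check{D}^{\mathcal{T}_\infty}$ with the twist connection $\widecheck{\nabla}^{\mathcal{T}_\infty}$ coming entirely from the spinorial connection on $(B,h)$.

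Next I would verify that the hypotheses force an induced spin structure on $B$. The nontriviality of $\mathcal{S}_i^{\aff}$ produces affine parallel spinors on the fibers $Z^i_p$, and the compatibility of the induced spin structures along paths in $B$ (Section \ref{SectionInducedStructures}) together with the triviality of $\Hol(\mathcal{V}_i, \nabla^{\mathcal{V}_i})$ gives a spin structure on the vertical distribution $\mathcal{V}_i$ that extends globally over $M_i$. Applying Lemma \ref{InducedStructures} to the short exact sequence $0 \to f_i^* TB \to TM_i \to \mathcal{V}_i \to 0$ then produces a spin structure on $f_i^* TB$; the holonomy triviality of $\nabla^{\mathcal{V}_i}$ allows this spin structure to descend from $f_i^* TB$ to $B$ itself, because the obstruction measured in Example \ref{ExampleCP} is exactly controlled by the fiber monodromy of $\mathcal{V}_i$. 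Since $H^1(B;\mathbb{Z}_2)$ is finite there are only finitely many spin structures on $B$, so we may pass to a subsequence on which all the induced spin structures coincide.

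With these preparations the spectral identification is essentially bookkeeping. Trivialising $\mathcal{P}$ as $B \times \mathbb{C}^r$ with $r = \rank(\mathcal{P})$, the operator $\check{D}^{\mathcal{T}_\infty}$ splits as a direct sum of $r$ copies of the canonical Dirac operator on $\SpinB$. If $n$ or $k$ is even, $\SpinB = \Sigma B$, so $\check{D}^{\mathcal{T}_\infty}$ becomes $r$ copies of $D^B$, yielding the spectrum of $D^B$ with each eigenvalue counted $r$ times. If $n$ and $k$ are both odd, then by \eqref{IsoOddOdd} and \eqref{CliffordMultCases} the summand $\Sigma^- B$ is a copy of $\Sigma B$ equipped with Clifford multiplication $-\gamma$, so the Dirac operator on $\Sigma^- B \otimes \mathcal{P}$ is $-D^B$ tensored with the identity, giving the spectrum of $D^B \oplus -D^B$ counted with multiplicity $r$.

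The main obstacle is the descent of the spin structure from $f_i^* TB$ to $B$: generically this fails (as in Example \ref{ExampleCP}), and one must use the vertical holonomy hypothesis together with the existence of affine parallel spinors to reduce the structure group of the fibration enough to perform the descent. A secondary care point is the sign analysis in the odd-odd case, where the action of $\omega_n^{\bbC}$ is what distinguishes the $+D^B$ and $-D^B$ summands; this follows from the anticommutation relations discussed in Section \ref{SectionSpinSubmersion}, but must be tracked explicitly to match the statement of the corollary.
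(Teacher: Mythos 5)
Your treatment of the analytic part is the same as the paper's: $\Vert A_i\Vert_\infty\to 0$ kills $\check{\mathcal{A}}_\infty$, the identity $\Vert\mathcal{Z}_i\Vert^2=-3\scal(Z_i)$ from the proof of Lemma \ref{BoundFiberCon} kills $\check{\mathcal{Z}}_\infty$, the holonomy hypothesis makes $\nabla^{\mathcal{V}_\infty}$ gauge-trivial, and the final bookkeeping with a global frame of $\mathcal{P}$ (including the sign on $\Sigma^-B$ in the odd-odd case) matches the paper. The gap is in your descent of the spin structure to $B$. You argue that the spin structure obtained on $f_i^*TB$ via Lemma \ref{InducedStructures} descends to $B$ ``because the obstruction measured in Example \ref{ExampleCP} is exactly controlled by the fiber monodromy of $\mathcal{V}_i$.'' That is false: in Example \ref{ExampleCP} itself, $S^5\rightarrow \bbC \mathrm{P}^2$ is a principal circle bundle with totally geodesic fibers, so $\mathcal{V}$ is trivialized by the action field and (since $k=1$, the form $\mathcal{W}_X=\tfrac12(W^{-1}X(W)-X(W)W^{-1})$ vanishes identically) the connection $\nabla^{\mathcal{V}}$ has trivial holonomy --- yet $f^*T\bbC\mathrm{P}^2$ is spin over $S^5$ while $\bbC\mathrm{P}^2$ is not spin. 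So triviality of $\Hol(\mathcal{V}_i,\nabla^{\mathcal{V}_i})$ alone cannot perform the descent; the obstruction in that example lives in the failure of $f^*$ to be injective on $H^2(\cdot;\bbZ_2)$ and is invisible to the vertical holonomy.

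What actually rules out this phenomenon is the hypothesis that $\mathcal{S}_i^{\aff}$ is nontrivial, used in a stronger way than you use it: in the bad example the spin structure induced on the fibers admits no affine parallel spinors, so $\mathcal{P}$ has rank zero and $\mathcal{S}^{\aff}=0$. The paper's mechanism is to observe that, with $\mathcal{S}_i^{\aff}\neq 0$ and the holonomy of the induced connection on $\mathcal{P}$ trivial, $\mathcal{P}$ is a trivial vector bundle of positive rank over $B$; since the twisted bundle $\SpinB\otimes\mathcal{P}$ is always globally well-defined over $B$ (Lemma \ref{QIsometry}), triviality of $\mathcal{P}$ then forces the locally defined factor $\SpinB$ to be globally well-defined, i.e.\ it produces a spin structure on $B$ directly, with no separate descent argument from $f_i^*TB$. (The subsequent step --- finitely many spin structures on the closed manifold $B$, hence a subsequence with a fixed induced structure --- you have correctly.) To repair your proof you would either have to adopt this argument or supply a genuine obstruction-theoretic argument showing that the existence of affine parallel spinors on the fibers, together with the monodromy hypothesis, kills the transgression-type obstruction of Example \ref{ExampleCP}; as written, the step does not follow.
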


\begin{proof}
From the above theorem it follows that the limit operator $\mathcal{D}^B$ equals $D^B \otimes \id$, respectively $(D^B \oplus - D^B) \otimes \id$ if
\begin{enumerate}
	\item  $\nabla^{\mathcal{V}_{\infty}}$ is gauge equivalent to the trivial connection,
	\item $\mathcal{Z}_{\infty} = 0$,
	\item $\mathcal{A}_{\infty} = 0$.
\end{enumerate}
Regarding the first point, we note that  $\nabla^{\mathcal{V}_{\infty}}$ is gauge equivalent to the trivial connection if $(\mathcal{V}_i, \nabla^{\mathcal{V}_i})$ is in the limit $i\rightarrow \infty$ a trivial vector bundle with trivial holonomy, see Remark \ref{RemarkVertHol}. In this case it is immediate that $\mathcal{P}$ is the trivial vector bundle. Hence, $\SpinB$ is globally well-defined. In particular, there is a well-defined induced spin structure on $B$. As there are only finitely many equivalence classes of spin structures  on a fixed closed Riemannian manifold \cite[Chapter II, Theorem 1.7]{LawsonMichelsohn}, we can choose a subsequence, again denoted by $(M_i, g_i)_{i \in \bbN}$ such that the spin structure on $(M_i, g_i)$ induces the same spin structure on $B$ for all $i \in \bbN$. 

Since $\Vert \mathcal{Z}_i \Vert_{\infty} \leq 3 \Vert \scal^{Z_i} \Vert_{\infty}$, see the proof of Lemma \ref{BoundFiberCon}, the second condition implies that the limit $\mathcal{Z}_{\infty}$ vanishes identically. Finally, it is immediate that $\mathcal{A}_{\infty} = 0$ is equivalent to the vanishing of the $A$-tensor in the limit since $\Vert \mathcal{A}_i \Vert_{\infty} = \Vert A_i \Vert_{\infty}$ by definition. 

For the last statement, let $l \coloneqq \rank(\mathcal{P})$. Since $\mathcal{P}$ is the trivial vector bundle there is  a global frame $(\rho_1, \ldots, \rho_l)$. Let $\varphi$ be an eigenspinor of $D^B$, resp.\ $D^B \oplus - D^B$. Then for any $1 \leq j \leq l$ the spinor $\varphi \otimes \rho_j$ is an eigenspinor of $\mathcal{D}^B$ with the same eigenvalue. Hence, any eigenvalue of  $D^B$, resp.\ $D^B \oplus - D^B$ is counted $l$-times.
\end{proof}

We conclude that there are three geometric obstructions for a convergence to the Dirac operator on the base space. As discussed in the Examples \ref{ExampleWithoutHolonomy}, \ref{ExampleWithHolonomy}, \ref{ExampleHeisenberg}, \ref{ExampleNonVanishingA} these geometric obstructions are all independent of each other. In the following example we discuss a class of collapsing sequences that satisfy the assumptions of Corollary \ref{CorollaryExplicitDirac}.

\begin{ex}\label{ExampleMaxTorus}
Let $G$ be a compact $m$-dimensional Lie group with Lie algebra $\mathfrak{g}$. Since $G$ is compact we can choose a biinvariant metric $g$.  Then we fix the spin structure
\begin{align}\label{ExampleGSpin}
P_{\Spin}G \cong G \times \Spin(m).
\end{align}
Next we fix a maximal torus $\bbT^k$ in $G$. The torus $\bbT^k$ acts on $G$ via left multiplication. Since the metric $g$ is biinvariant, the maximal torus $\bbT^k$ acts on $G$ as isometries. In the following we consider the homogeneous space $B \coloneqq \mfaktor{\bbT^k}{G}$ with the induced quotient metric $h$. Let $\mathfrak{g} = \mathfrak{t} + \mathfrak{b}$ be the splitting of the Lie algebra of $G$ into the Lie algebra $\mathfrak{t}$ of $\bbT^k$ and its orthogonal complement $\mathfrak{b} \cong T_{f(e)}B$ with respect to the biinvariant metric $g$. Here $e$ is the neutral element in $G$ and $f: G \rightarrow B$ is the quotient map.


By construction $f: (G,g) \rightarrow (B,h)$ is a $\bbT^k$-principal bundle. Furthermore, $f$ is a Riemannian submersion with totally geodesic fibers. We can write $g = \check{g} + f^{\ast}h$, where $\check{g}$ vanishes on vectors orthogonal to the fibers. For any $\eps > 0$ we define $g_{\eps} \coloneqq \eps^2 \check{g} + f^{\ast} h$. We observe that $g_{\eps}$ is left invariant for all $\eps > 0$ and biinvariant if and only if $\eps = 1$. As $\eps \rightarrow 0$ the sequence $(G,g_{\eps})_{\eps}$ converges to $(B,h)$ in the Gromov-Hausdorff topology. For abbreviation we denote by $G_{\eps}$ the Riemannian manifold $(G, g_{\eps})$. It follows from \cite[Theorem 2.1]{CheegerGromovCollapse1} that there are constants $C$ and $d$ such that $\vert \sec(G_{\eps} ) \vert \leq C$ and $\diam(G_{\eps})\leq d$ for all $\eps \in (0,1)$.

Now we show that the assumptions of Corollary \ref{CorollaryExplicitDirac} are fulfilled. Since the fibers of $f_{\eps}. (G,g_{\eps}) \rightarrow (B,h)$ are totally geodesic for all $\eps > 0$ it follows that the tensor $T_{\eps}$ vanishes identically for all $\eps > 0$. Moreover, the vertical distribution $\mathcal{V}_{\eps}$ is a trivial $\bbR^k$ vector bundle over $G_{\eps}$ as $\bbT^k$ acts on $G_{\eps}$ as isometries for all $\eps > 0$. Thus $\nabla^{\mathcal{V}_{\eps}}$ is gauge equivalent to the trivial connection for all $\eps > 0$. Applying this gauge transformation if necessary, we can assume without loss of generality that $\nabla^{\mathcal{V}_{\eps}}$ is the trivial connection on $\mathcal{V}_{\eps}$ for all $\eps > 0$. By construction, the fibers of $f_{\eps}$ are embedded flat tori. Thus, the induced Levi-Civita connection on the fiber is the affine connection, i.e.\ $\mathcal{Z}_{\eps} = 0$ for all $\eps > 0$. Next, we take a global orthonormal vertical frame $(\zeta_1, \ldots, \zeta_k)$ that trivializes the vertical distribution $\mathcal{V}_1$ of $(G,g_1)$. Then $(\eps^{-1} \zeta_1, \ldots, \eps^{-1}\zeta_k)$ is an orthonormal vertical frame for the vertical distribution $\mathcal{V}_{\eps}$ of $G_{\eps}$. For any two horizontal vectors $X, Y$ we calculate
\begin{align*}
 A_{\eps}(X,Y) &= \frac{1}{2} \sum_{a = 1}^k g_{\eps}([X,Y], \eps^{-1} \zeta_a) \\
&= \frac{1}{2} \sum_{a  = 1}^k \eps^{2-1} \check{g}([X,Y], \zeta_a ) \\
& = \eps A_1(X,Y).
\end{align*}
In particular, it follows that $ \lim_{\eps \rightarrow 0}\Vert A_{\eps} \Vert_{g_{\eps}} = 0$. Hence, all assumptions of Corollary \ref{CorollaryExplicitDirac} are fulfilled. Thus, if for almost all $\eps \in (0,1]$ the space of affine parallel spinors is nontrivial then there is an induced spin structure on $B$ and the spectra of the Dirac operators restricted to the space of affine parallel spinors converges, up to multiplicity, to the spectrum of the Dirac operator $D^B$ of $B$, if $k$ or $\dim(B)$ is even, respectively to the spectrum of $D^B \oplus -D^B$, if $\dim(B)$ and $k$ are odd. 
\end{ex}

We conclude this section by comparing Theorem \ref{MainTheorem} with the results by Lott, Theorem \ref{LottSingular} and Theorem \ref{LottSmooth}.  The main differences between the strategies used in this article and \cite{LottDirac} is first the isometry $Q$ introduced in Lemma \ref{QIsometry} and that we did not use either the Bochner-type formula for the Dirac operator or the minimax characterization.  Nonetheless our results are compatible with the results of \cite{LottDirac}. Let $(M_i,g_i)_{i \in \bbN}$ be a collapsing sequence of spin manifolds in $\mfdspacenk$ converging to an $n$-dimensional Riemannian manifold $(B,h)$. First, we observe that the twisted Clifford bundle $\SpinB\otimes\mathcal{P}$ is the same as the Clifford module $E^B$ in Theorem \ref{LottSmooth}. Moreover, if we would have used the map $Q: L^2(\SpinB \otimes \mathcal{P}) \rightarrow \mathcal{S}^{\aff}$ without the factor $\sqrt{\vol(Z)}$ then we would have that any $\Phi \in \mathcal{S}^{\aff}$ satisfies
\begin{equation*}
\begin{aligned}
D^M \Phi &=  Q \circ \check{D}^{\mathcal{T}} \circ Q^{-1} \Phi + \frac{1}{2}\gamma(\mathcal{Z}) +  \frac{1}{2}\gamma(\mathcal{A})- \frac{1}{2}\sum_{a=1}^k \gamma(T(\zeta_a, \zeta_a) )\Phi 
\end{aligned}
\end{equation*}
This operator restricted to the space of affine spinors  is equivalent to the operator $\mathcal{D}^B$ of Theorem \ref{LottSmooth} in the case $G = \Spin(n+k)$ and $V = \Sigma_{n+k}$ \cite[(3.9)]{LottDirac}. 

Our results are also compatible with Theorem \ref{LottSingular} in the case of $G= \Spin(n+k)$, $V = \Sigma_{n+k}$ and a smooth limit space. We observe that the space $(L^2(\check{X}, \chi \dvol) \otimes V)^G$ is up to an isometry equivalent to $L^2(\SpinB \otimes \mathcal{P}, \dvol)$. Due to the isometry $Q$, see Lemma \ref{QIsometry}, we were able to choose the limit measure $\chi \equiv 1$.  The operator $ (D^X)^2$ arises as the limit of the operators $(D^{X_i})_{i \in \bbN}$, where $D^{X_i}$ is the Dirac operator on $M_i$ restricted to the space of affine parallel spinors \cite[(4.6) and p.\ 192]{LottDirac}. Thus, it follows that $(D^X)^2$ is, up to isometry, equivalent to the operator $(\mathcal{D}^B)^2$ derived in Theorem \ref{MainTheorem}. Hence, Theorem \ref{MainTheorem} is an accentuation of Lott's results \cite{LottDirac} in the special case of collapsing spin manifolds in $\mfdspacenk$ with a smooth limit space.

\bibliography{literature.bib}
\bibliographystyle{amsalpha}

\end{document}